\newcommand{\impli}{\Rightarrow}
\newcommand{\N}{\mathbb{N}}
\newcommand{\sub}{\subseteq}
\def\epsilon{\varepsilon}
\newtheorem{theo}{Theorem}[section]
\newtheorem{lem}[theo]{Lemma}
\newtheorem{pro}[theo]{Proposition}
\newtheorem{cor}[theo]{Corollary}
\newtheorem{rem}[theo]{Remark}
\newtheorem{exa}[theo]{Example}
\newtheorem{question}[theo]{Question}
\numberwithin{equation}{section}
\title{Dunford-Pettis type properties in $L_1$ of a vector measure}
\author{Jos\'{e} Rodr\'{i}guez}
\address{Dpto. de Matem\'{a}ticas\\E.T.S. de Ingenieros Industriales de Albacete\\
Universidad de Castilla-La Mancha\\ 02071 Albacete\\ Spain} 
\email{jose.rodriguezruiz@uclm.es, joserr@um.es}
\subjclass[2020]{46E30, 46G10}
\keywords{Dunford-Pettis operator; AL-space; positive Schur property; Asplund space; vector measure}
\thanks{The research was supported by grants PID2021-122126NB-C32 
(funded by MCIN/AEI/10.13039/501100011033 and ``ERDF A way of making Europe'', EU) and 
21955/PI/22 (funded by {\em Fundaci\'on S\'eneca - ACyT Regi\'{o}n de Murcia}).}
\begin{document}

\begin{abstract}
Let $\nu$ be a countably additive vector measure defined on a $\sigma$-algebra and taking values in a Banach space. 
In this paper we deal with the following three properties for the Banach lattice $L_1(\nu)$ of all $\nu$-integrable real-valued functions: 
the Dunford-Pettis property, the positive Schur property and being lattice-isomorphic to an AL-space. We give new results and we also provide alternative proofs
of some already known ones.
\end{abstract}

\maketitle

\section{Introduction}

Let $X$ be a Banach space with (topological) dual~$X^*$, let $(\Omega,\Sigma)$ be a measurable space and let $\nu:\Sigma\to X$
be a (countably additive) vector measure. A $\Sigma$-measurable function $f:\Omega \to \mathbb{R}$ is called {\em $\nu$-integrable} if 
it is $|x^*\nu|$-integrable for all $x^*\in X^*$ and, for each $A\in \Sigma$, there is $\int_A f \, d\nu\in X$
such that
$$
	x^*\left ( \int_A f \, d\nu \right)=\int_A f\,d(x^*\nu)  	\quad\text{for all $x^*\in X^*$}.
$$
Here $x^*\nu$ is the signed measure obtained as the composition of~$\nu$ with $x^*$ and
$|x^*\nu|$ denotes its variation. By identifying functions which coincide except to a $\nu$-null set
(where $A\in \Sigma$ is said to be {\em $\nu$-null} if $\nu(B)=0$ for every $B\in \Sigma$ with~$B\sub A$), 
the set $L_1(\nu)$ of all (equivalence classes of) $\nu$-integrable functions is a Banach lattice with the $\nu$-a.e. order and the norm
$$
	\|f\|_{L_1(\nu)}:=\sup_{x^*\in B_{X^*}}\int_\Omega |f|\,d|x^*\nu|.
$$
Here $B_{X^*}$ denotes the closed unit ball of~$X^*$. Let us agree to say that $L_1(\nu)$ is the $L_1$ space of the vector measure~$\nu$.

Every Banach lattice with order continuous norm and a weak unit is lattice-isometric to the $L_1$ space of a vector measure, \cite[Theorem~8]{cur1}
(cf. \cite[Proposition~2.4]{dep-alt}). Such a representation is not unique. For instance, the usual space $L_1[0,1]$ is equal to $L_1(\nu_i)$ for each one of the 
following $X_i$-valued vector measures $\nu_i$ defined on the Borel $\sigma$-algebra of~$[0,1]$:
\begin{itemize}
\item $X_1:=\mathbb R$ and $\nu_1(A):=\lambda(A)$ (the Lebesgue measure of~$A$);
\item $X_2:=L_1[0,1]$ and $\nu_2(A):=\chi_A$ (the characteristic function of~$A$);
\item $X_3:=c_0$ and $\nu_3(A):=(\int_A r_n \, d\lambda)_{n\in \N}$, where $(r_n)_{n\in \N}$ is the sequence of Rademacher functions.
\end{itemize}
The structure of the space~$L_1(\nu)$ can be greatly conditioned by certain properties of~$\nu$. For complete information on these spaces and their important
role in Banach lattices and operator theory, we refer the reader to the monograph~\cite{oka-alt} and the papers 
\cite{cal-alt-6,cur-ric-6,cur-ric-5,nyg-rod,oka-rod-san,rod16,rod-rue}.

The inclusion map 
$$
	\iota_\nu: L_1(|\nu|) \to L_1(\nu)   
$$ 
is a well-defined injective lattice-homomorphism, where $|\nu|$ is the variation of~$\nu$ (see, e.g., \cite[Lemma~3.14]{oka-alt}). If $\iota_\nu$ is surjective, then it is a lattice-isomorphism 
and, moreover, we have $|\nu|(\Omega)<\infty$. Curbera~\cite{cur2} addressed the question of when the $L_1$ space
of a vector measure is lattice-isomorphic to an AL-space. Recall that a 
Banach lattice~$E$ is said to be an {\em AL-space} if its norm satisfies $\|x+y\|=\|x\|+\|y\|$ whenever $x,y\in E$ are disjoint, which is equivalent to saying that
$E$ is lattice-isometric to the usual space $L_1(\mu)$ of a non-negative measure~$\mu$ (see, e.g., \cite[Theorem~4.27]{ali-bur}). 
It turns out that $L_1(\nu)$ is lattice-isomorphic to an AL-space if and only if $\iota_\nu$ is surjective, \cite[Proposition~2]{cur2}. This is also equivalent
to the fact that the {\em integration operator} of~$\nu$, that is, the norm~$1$ operator
$$
	I_\nu:L_1(\nu)\to X,\quad I_\nu(f):=\int_\Omega f \, d\nu \quad\text{for all $f\in L_1(\nu)$},
$$
is {\em cone absolutely summing} (i.e., the series $\sum_{n=1}^\infty I_\nu(f_n)$ is absolutely convergent 
whenever $\sum_{n\in \N}f_n$ is unconditionally convergent and $f_n \in L_1(\nu)^+$ for all $n\in \N$), \cite[Proposition~3.1]{cur0}. 
As usual, given a Banach lattice~$E$, we denote by $E^+$ its positive cone, that is, $E^+:=\{x\in E: x\geq 0\}$.
At this point we should stress that if a Banach lattice is isomorphic (just as a Banach space) to an AL-space, then it is lattice-isomorphic to
an AL-space~\cite{abr-woj} (cf. \cite[Proposition~2.1]{hev-alt}). 

An operator between Banach spaces is said to be {\em Dunford-Pettis} (or {\em completely continuous})
if it maps weakly null sequences to norm null ones. The space $L_1(\mu)$ of a non-negative measure~$\mu$ has the {\em Dunford-Pettis property}, that is, every weakly compact operator from~$L_1(\mu)$ to an arbitrary Banach space is Dunford-Pettis (see, e.g., \cite[Theorem~5.4.5]{alb-kal} or \cite[Theorem~5.85]{ali-bur}). 
In general, this is not true for the $L_1$ space of a vector measure. Indeed,
reflexive infinite-dimensional Banach spaces fail the Dunford-Pettis property and, as we have already mentioned, spaces like $\ell_p$ and $L_p[0,1]$ for $1<p<\infty$ 
can be seen as $L_1$ spaces of a vector measure. On the other side, there are $L_1$ spaces of a vector measure having the Dunford-Pettis property
which are not lattice-isomorphic to an AL-space, like~$c_0$. Curbera showed in \cite[Theorem~4]{cur3} that $L_1(\nu)$
has the Dunford-Pettis property if $\nu$ has $\sigma$-finite variation and $X$ has the Schur property (i.e., every weakly null sequence in~$X$ is norm null).
In fact, he proved that: 
\begin{enumerate}
\item[(i)] $L_1(\nu)$ has the positive Schur property whenever $X$ has the Schur property.
\item[(ii)] If $L_1(\nu)$ has the positive Schur property and $\nu$ has $\sigma$-finite variation,
then $L_1(\nu)$ has the Dunford-Pettis property (cf. \cite[Section~3.2]{cal-alt-6}). 
\end{enumerate}
Recall that a Banach lattice $E$ is said to have the {\em positive Schur property} if every weakly null sequence in~$E^+$ is norm null.
Note that statement~(i) can be deduced at once from the fact that $L_1(\nu)$ has the positive Schur property if and only if the integration operator
$I_\nu$ is {\em almost Dunford-Pettis} (i.e., $(I_\nu(f_n))_{n\in \N}$ is norm null for every weakly null sequence $(f_n)_{n\in \N}$ in $L_1(\nu)^+$), 
see \cite[Theorem~5.12]{cal-alt-6}. 

The integration operator is undoubtedly a key point in the theory of $L_1$ spaces of a vector measure. Note that
its properties depend on $\nu$ rather than on the space $L_1(\nu)$ itself. For instance, going back to the example at the beginning, we have:
\begin{itemize}
\item $I_{\nu_1}$ is the functional given by $I_{\nu_1}(f)=\int_{[0,1]} f \, d\lambda$;
\item $I_{\nu_2}$ is the identity operator on $L_1[0,1]$;
\item $I_{\nu_3}: L_1[0,1]\to c_0$ is the operator given by $I_{\nu_3}(f)=(\int_{[0,1]} r_n f \, d\lambda)_{n\in \N}$, which is
strictly singular but fails to be weakly compact.
\end{itemize}
It is known that $L_1(\nu)$ is lattice-isomorphic to an AL-space whenever $I_\nu$ is compact (see \cite[Theorem~1]{oka-alt2}, 
cf. \cite[Theorem~2.2]{oka-alt4} and \cite[Theorem~3.3]{cal-alt-5}), absolutely $p$-summing for $1\leq p <\infty$ (see \cite[Theorem~2.2]{oka-alt3}) 
or, more generally, Dunford-Pettis and Asplund (see \cite[Theorem~3.3]{rod15}). Recall that an operator between Banach spaces is said to be {\em Asplund}
if it factors through a Banach space which is Asplund (i.e., all of its separable subspaces have separable dual).
In particular, $L_1(\nu)$ is lattice-isomorphic to an AL-space if $I_\nu$ is Dunford-Pettis
and $X$ is Asplund,~\cite[Theorem~1.3]{cal-alt-5}. This is a partial answer to the following question posed by
Okada, Ricker and Rodr\'{i}guez-Piazza~\cite{oka-alt3}:

\begin{question}\label{equation:ORR}
Suppose that $I_\nu$ is Dunford-Pettis and that $X$ contains no subspace isomorphic to~$\ell_1$.
Is $L_1(\nu)$ lattice-isomorphic to an AL-space? 
\end{question}

They showed that this is the case if, in addition, $X$ has an unconditional Schauder basis, \cite[Theorem~1.2]{oka-alt3}.
Note that any Banach space with an unconditional Schauder basis and no subspace isomorphic to~$\ell_1$ has separable dual
(see, e.g., \cite[Theorem~3.3.1]{alb-kal}). To the best of our knowledge, Question~\ref{equation:ORR} remains open.

In this paper we deal with $L_1$ spaces of a vector measure with focus on the property of being isomorphic to an AL-space, the 
positive Schur property and the Dunford-Pettis property. Our aim is twofold: we include new results and we also present alternative proofs
of some already known ones which hopefully might led to a better understanding of the theory.  
The structure of the paper is as follows.

In Section~\ref{section:preliminaries} we collect some known preliminary facts on $L_1$ spaces of a vector measure that will be needed later. 

In Section~\ref{section:DPintegrationoperator} we revisit the aforementioned positive answer to Question~\ref{equation:ORR}
for Asplund spaces (Corollary~\ref{cor:Asplund}) and the related result for integration operators which are Dunford-Pettis and Asplund
(Corollary~\ref{cor:Asplundoperator}). 

In Section~\ref{section:DP} we show that the positive Schur property of~$L_1(\nu)$ can be characterized by means of a
Dunford-Pettis type property with respect to the so-called ``vector duality'' induced by the integration operator, that is, the continuous bilinear map
$$
	L_1(\nu)\times L_\infty(\nu) \to X, \qquad
	(f,g) \mapsto I_\nu(fg)=\int_\Omega fg \, d\nu
$$
(Theorem~\ref{theo:PSP}). We also give another proof of the aforementioned result of~\cite{cur3} stating that
$L_1(\nu)$ has the Dunford-Pettis property if it has the positive Schur property and $\nu$ has $\sigma$-finite variation
(Corollary~\ref{cor:PSP-DP}). It seems to be an open question whether the assumption on the variation can be dropped, namely: 

\begin{question}\label{equation:ORRdos}
Suppose that $L_1(\nu)$ has the positive Schur property. Does $L_1(\nu)$ have the Dunford-Pettis property? 
\end{question}

Finally, in Example~\ref{exa:Lipecki} we discuss a class of vector measures~$\nu$  
such that $L_1(\nu)$ has the positive Schur property and the Dunford-Pettis property, but fails to be lattice-isomorphic to an AL-space,
among other interesting properties.

\section{Preliminaries}\label{section:preliminaries}

All Banach spaces considered in this paper are real. An {\em operator} is a continuous linear map between Banach spaces.
Given an operator~$T$, its adjoint is denoted by~$T^*$. 
By a {\em subspace} of a Banach space we mean a norm closed linear subspace. 
Let $Z$ be a Banach space. The norm of~$Z$ is denoted by $\|\cdot\|_Z$, or simply $\|\cdot\|$, and we write $B_Z:=\{z\in Z: \|z\|\leq 1\}$ (the closed unit ball of~$Z$). 
The evaluation of $z^*\in Z^*$ at $z\in Z$ is denoted by either
$z^*(z)$ or $\langle z^*,z\rangle$. By a {\em projection} from~$Z$ onto a subspace~$Y \sub Z$
we mean an operator $P:Z\to Z$ such that $P(Z)=Y$ and $P$ is the identity when restricted to~$Y$.
The subspace of~$Z$ generated by a set $H \sub Z$ is denoted by $\overline{{\rm span}}(H)$.

In this section we gather, for the reader's convenience, some known facts on $L_1$ spaces of a vector measure. 
A basic reference on this topic is \cite[Chapter~3]{oka-alt}. 

Throughout this section $X$ is a Banach space, $(\Omega,\Sigma)$ is a measurable space and $\nu\in {\rm ca}(\Sigma,X)$.
As usual, we denote by ${\rm ca}(\Sigma,X)$ the set of all countably
additive $X$-valued vector measures defined on~$\Sigma$.
The {\em range} of~$\nu$ is the set
$$
	\mathcal{R}(\nu):=\{\nu(A): \, A\in \Sigma\}\sub X.
$$ 
The variation and semivariation of~$\nu$ are denoted by~$|\nu|$ and $\|\nu\|$, respectively.
The family of all $\nu$-null sets is denoted by $\mathcal{N}(\nu)$. By a {\em Rybakov control measure} of~$\nu$
we mean a finite non-negative measure of the form $\mu=|x^*\nu|$ for some $x^*\in X^*$ such that 
$\mathcal{N}(\mu) = \mathcal{N}(\nu)$ (see, e.g., \cite[p.~268, Theorem~2]{die-uhl-J}).
Throughout this section $\mu$ is a fixed Rybakov control measure of~$\nu$.

\subsection{$L_\infty$ of a vector measure}
A function $f:\Omega \to \mathbb{R}$ is called {\em $\Sigma$-simple} if it is a linear combination of
functions of the form $\chi_A$, where $A\in \Sigma$. Clearly, all $\Sigma$-simple functions are $\nu$-integrable.
The set of all $\Sigma$-simple functions is norm dense in~$L_1(\nu)$ (see, e.g., \cite[Theorem~3.7(ii)]{oka-alt}), so one has
\begin{equation}\label{eqn:range}
	\overline{I_\nu(L_1(\nu))}
	=
	\overline{{\rm span}}(\mathcal{R}(\nu)).
\end{equation}
More generally, every $\nu$-essentially bounded $\Sigma$-measurable function $f:\Omega \to \mathbb{R}$ is $\nu$-integrable. 
By identifying functions which coincide $\nu$-a.e., the set $L_\infty(\nu)$ of all (equivalence classes of)
$\nu$-essentially bounded $\Sigma$-measurable functions is a Banach lattice with the $\nu$-a.e. order and the $\nu$-essential supremum norm~$\|\cdot\|_{L_\infty(\nu)}$. Of course,
$L_\infty(\nu)$ is equal to the usual spaces $L_\infty(|\nu|)$ and $L_\infty(\mu)$. The inclusion map
$$
	j_\nu: L_\infty(\nu) \to L_1(\nu)
$$
is an injective operator. Moreover, it is weakly compact. Indeed, $j_\nu(B_{L_\infty(\nu)})$ coincides with the order interval $[-\chi_\Omega,\chi_\Omega]$ in $L_1(\nu)$, so
it is weakly compact as $L_1(\nu)$ has order continuous norm (see, e.g., \cite[Theorem~4.9]{ali-bur}). 
Hence, $I_\nu(j_\nu(B_{L_\infty(\nu)}))$ is weakly compact in~$X$. We have the following characterization of relative norm compactness
of~$\mathcal{R}(\nu)$ (see, e.g., \cite[Proposition~2.41]{oka-alt}):

\begin{pro}\label{pro:compactrange}
The following statements are equivalent:
\begin{enumerate}
\item[(i)] $\mathcal{R}(\nu)$ is relatively norm compact.
\item[(ii)] $I_\nu(j_\nu(B_{L_\infty(\nu)}))$ is norm compact.
\end{enumerate}
\end{pro}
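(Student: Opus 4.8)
The plan is to prove the two implications separately; the implication (ii)$\impli$(i) is immediate, while (i)$\impli$(ii) carries the real content.

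For (ii)$\impli$(i), I would simply observe that $I_\nu(j_\nu(\chi_A))=\nu(A)$ for every $A\in\Sigma$ and that $\chi_A\in B_{L_\infty(\nu)}$, so that
$$
	\mathcal{R}(\nu)=\{I_\nu(j_\nu(\chi_A)):A\in\Sigma\}\subseteq I_\nu(j_\nu(B_{L_\infty(\nu)})).
$$
If the larger set is norm compact, its subset $\mathcal{R}(\nu)$ is relatively norm compact.

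For (i)$\impli$(ii), I would first record that $I_\nu(j_\nu(B_{L_\infty(\nu)}))$ is norm closed: this is free, since it is weakly compact (as noted before the statement, $I_\nu$ is weak-to-weak continuous and $j_\nu(B_{L_\infty(\nu)})=[-\chi_\Omega,\chi_\Omega]$ is weakly compact), hence weakly closed and a fortiori norm closed. It therefore suffices to trap $I_\nu(j_\nu(B_{L_\infty(\nu)}))$ inside a norm compact set, and the natural candidate is $2\,\overline{{\rm aco}}(\mathcal{R}(\nu))$, twice the closed absolutely convex hull of $\mathcal{R}(\nu)$, which is norm compact by Mazur's theorem precisely because $\mathcal{R}(\nu)$ is relatively norm compact. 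The heart of the matter is then the inclusion $I_\nu(j_\nu(B_{L_\infty(\nu)}))\subseteq 2\,\overline{{\rm aco}}(\mathcal{R}(\nu))$, which I would first establish on simple functions and then transfer by density. Given a $\Sigma$-simple $g$ with $0\le g\le 1$, written as $g=\sum_{i=1}^n a_i\chi_{A_i}$ with the $A_i\in\Sigma$ pairwise disjoint and values ordered $1\ge a_1\ge\dots\ge a_n\ge 0$, an Abel-summation (``layer cake'') rearrangement gives $g=\sum_{k=1}^n (a_k-a_{k+1})\chi_{B_k}$, where $B_k:=A_1\cup\dots\cup A_k$ and $a_{n+1}:=0$, and hence
$$
	I_\nu(j_\nu(g))=\int_\Omega g\,d\nu=\sum_{k=1}^n (a_k-a_{k+1})\,\nu(B_k),
$$
a combination of range elements with non-negative coefficients summing to $a_1\le 1$; since $0=\nu(\emptyset)\in\mathcal{R}(\nu)$, this lies in ${\rm co}(\mathcal{R}(\nu))$. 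Splitting a general simple $g$ with $\|g\|_{L_\infty(\nu)}\le 1$ as $g=g^+-g^-$ then yields $I_\nu(j_\nu(g))\in {\rm co}(\mathcal{R}(\nu))-{\rm co}(\mathcal{R}(\nu))\subseteq 2\,{\rm aco}(\mathcal{R}(\nu))$.

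Finally I would pass from simple functions to all of $B_{L_\infty(\nu)}$: the $\Sigma$-simple functions lying in $B_{L_\infty(\nu)}$ are $\|\cdot\|_{L_\infty(\nu)}$-dense in $B_{L_\infty(\nu)}$, so, $I_\nu\circ j_\nu$ being continuous, their images are norm dense in $I_\nu(j_\nu(B_{L_\infty(\nu)}))$; as $2\,\overline{{\rm aco}}(\mathcal{R}(\nu))$ is closed, the inclusion passes to the whole ball. Being a norm closed subset of the norm compact set $2\,\overline{{\rm aco}}(\mathcal{R}(\nu))$, the set $I_\nu(j_\nu(B_{L_\infty(\nu)}))$ is itself norm compact. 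I expect the only genuinely delicate points to be bookkeeping—verifying the layer-cake identity together with the coefficient bound, and the routine density/continuity argument transferring the inclusion from simple functions to $B_{L_\infty(\nu)}$—since the two structural inputs (Mazur's theorem, converting relative compactness of $\mathcal{R}(\nu)$ into compactness of its closed absolutely convex hull, and the already-noted weak compactness, which hands us norm closedness for free) require no new ideas.
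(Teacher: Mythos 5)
Your proposal is correct, and it is worth noting that the paper itself gives no proof of this proposition: it is quoted as a known fact from the monograph of Okada, Ricker and S\'{a}nchez P\'{e}rez \cite[Proposition~2.41]{oka-alt}. So your argument is a self-contained substitute for that citation, and every step checks out. The implication (ii)$\impli$(i) via $\nu(A)=I_\nu(j_\nu(\chi_A))$ is exactly as easy as you say. For (i)$\impli$(ii), the three ingredients fit together correctly: norm closedness of $I_\nu(j_\nu(B_{L_\infty(\nu)}))$ does follow from the weak compactness recorded in the paper just before the statement (weakly compact $\Rightarrow$ weakly closed $\Rightarrow$ norm closed); Mazur's theorem makes $2\,\overline{{\rm aco}}(\mathcal{R}(\nu))$ norm compact; and your layer-cake identity is the classical Bartle--Dunford--Schwartz-type inclusion $\{\int_\Omega g\,d\nu:\ 0\le g\le 1\}\subseteq \overline{{\rm co}}(\mathcal{R}(\nu))$, with the telescoping coefficients $a_k-a_{k+1}\ge 0$ summing to $a_1\le 1$ and the slack absorbed by $\nu(\emptyset)=0$; splitting $g=g^+-g^-$ and passing to the limit along $\|\cdot\|_{L_\infty(\nu)}$-approximating simple functions (which is legitimate, since $I_\nu\circ j_\nu$ is a bounded operator and the target set is closed) completes the inclusion. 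One could even avoid the appeal to weak compactness altogether by noting that $I_\nu(j_\nu(B_{L_\infty(\nu)}))$ is then totally bounded and proving compactness of its closure, but using the weak compactness already established in the paper is cleaner. In short: correct, elementary, and it replaces an external citation by a direct argument.
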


\subsection{Composition of a vector measure with an operator}

We will use several times the following fact (see, e.g., \cite[Lemma~3.27]{oka-alt}):

\begin{pro}\label{pro:composition}
Let $T:X\to Y$ be an operator between Banach spaces. Then:
\begin{enumerate}
\item[(i)] The composition $\tilde{\nu}:=T\circ \nu:\Sigma \to Y$
is a countably additive vector measure.
\item[(ii)] Every $\nu$-integrable function is $\tilde{\nu}$-integrable.
\item[(iii)] The inclusion map $u: L_1(\nu) \to L_1(\tilde{\nu})$ is an operator and $I_{\tilde{\nu}}\circ u=T\circ I_\nu$.
\end{enumerate} 
\end{pro}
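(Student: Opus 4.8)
The plan is to reduce every assertion about~$\tilde{\nu}$ to the corresponding one for~$\nu$ by means of the single algebraic identity
$$
	y^*\tilde{\nu}=(T^*y^*)\nu \qquad\text{for all $y^*\in Y^*$},
$$
which follows at once from $y^*\tilde{\nu}=y^*\circ(T\circ \nu)=(T^*y^*)\circ \nu$. This turns any functional $y^*\in Y^*$ acting on~$\tilde{\nu}$ into the functional $T^*y^*\in X^*$ acting on~$\nu$, and it is the engine behind all three parts. With it in hand, part~(i) is immediate: for a disjoint sequence $(A_n)_{n\in \N}$ in~$\Sigma$ the series $\sum_n \nu(A_n)$ converges in norm to $\nu(\bigcup_n A_n)$ by countable additivity of~$\nu$, and since $T$ is norm-to-norm continuous it commutes with this convergent series, yielding $\tilde{\nu}(\bigcup_n A_n)=T(\nu(\bigcup_n A_n))=\sum_n T(\nu(A_n))=\sum_n \tilde{\nu}(A_n)$. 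Hence $\tilde{\nu}\in {\rm ca}(\Sigma,Y)$.

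For part~(ii), let $f$ be $\nu$-integrable and verify the two clauses of the definition separately. First, the identity above gives $|y^*\tilde{\nu}|=|(T^*y^*)\nu|$, so the $|x^*\nu|$-integrability of~$f$ for \emph{every} $x^*\in X^*$, specialized to $x^*=T^*y^*$, shows that $f$ is $|y^*\tilde{\nu}|$-integrable for every $y^*\in Y^*$. Second, I claim that the element $T(\int_A f\,d\nu)\in Y$ serves as $\int_A f\,d\tilde{\nu}$: for every $y^*\in Y^*$,
$$
	y^*\Bigl(T\Bigl(\int_A f\,d\nu\Bigr)\Bigr)
	=(T^*y^*)\Bigl(\int_A f\,d\nu\Bigr)
	=\int_A f\,d\bigl((T^*y^*)\nu\bigr)
	=\int_A f\,d(y^*\tilde{\nu}),
$$
where the middle equality uses the $\nu$-integrability of~$f$. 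Thus $f$ is $\tilde{\nu}$-integrable with $\int_A f\,d\tilde{\nu}=T(\int_A f\,d\nu)$; taking $A=\Omega$ already records the formula $I_{\tilde{\nu}}(f)=T(I_\nu(f))$ needed later.

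For part~(iii), I first check that the inclusion $u$ is well defined on equivalence classes, i.e.\ that $\nu$-null sets are $\tilde{\nu}$-null; this is clear since $\nu(B)=0$ forces $\tilde{\nu}(B)=T(\nu(B))=0$. Its linearity is trivial, being the identity at the level of functions, and its boundedness follows from a direct estimate: using the displayed identity together with the positive homogeneity $|(\lambda x^*)\nu|=|\lambda|\,|x^*\nu|$ and the bound $\|T^*y^*\|\le \|T\|$ for $y^*\in B_{Y^*}$, one gets
$$
	\|u(f)\|_{L_1(\tilde{\nu})}
	=\sup_{y^*\in B_{Y^*}}\int_\Omega |f|\,d|(T^*y^*)\nu|
	\le \|T\|\,\|f\|_{L_1(\nu)}.
$$
Finally, the identity $I_{\tilde{\nu}}\circ u=T\circ I_\nu$ is exactly the $A=\Omega$ case established in part~(ii).

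I do not expect a genuine obstacle here: the whole statement is bookkeeping once the adjoint identity $y^*\tilde{\nu}=(T^*y^*)\nu$ is isolated. The only points demanding care are tracking the functionals correctly through~$T^*$ and handling the scaling of the $L_1(\nu)$-norm so that the range $\{T^*y^*:y^*\in B_{Y^*}\}\sub \|T\|\,B_{X^*}$ produces the constant $\|T\|$ in the norm estimate rather than an unjustified~$1$.
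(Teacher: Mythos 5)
Your proof is correct. Note that the paper itself does not prove this proposition: it is quoted as a known fact with a reference to \cite[Lemma~3.27]{oka-alt}, so there is no internal proof to compare against. Your argument via the adjoint identity $y^*\tilde{\nu}=(T^*y^*)\nu$ is exactly the standard one behind the cited result: it correctly handles countable additivity through continuity of~$T$, produces $T\bigl(\int_A f\,d\nu\bigr)$ as the candidate for $\int_A f\,d\tilde{\nu}$, verifies that $\nu$-null sets are $\tilde{\nu}$-null so that $u$ is well defined on equivalence classes, and gets the bound $\|u\|\leq\|T\|$ with the correct scaling of functionals in $B_{Y^*}$ through~$T^*$.
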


\subsection{L-weakly compact sets and the positive Schur property}
Let $E$ be a Banach lattice. Given a set $W \sub E$, we denote by ${\rm Sol}(W)$ its {\em solid hull}, that is,
the set of all $x\in E$ such that $|x|\leq |y|$ for some $y\in W$. It is known
that if $W$ is relatively weakly compact, then every disjoint sequence in ${\rm Sol}(W)$ is weakly null
(see, e.g., \cite[Theorem~4.34]{ali-bur}). The set $W$ is said to be {\em L-weakly compact}
if it is bounded and every disjoint sequence in ${\rm Sol}(W)$ is norm null. Every 
L-weakly compact set is relatively weakly compact (see, e.g., \cite[Theorem~5.55]{ali-bur}), but the converse does not hold in general.
The following result is well-known (see \cite[Corollaries~2.3.5 and~3.6.8]{mey2}, \cite[Theorem~1.16]{san-hen} and \cite[Lemma~3]{wnu2}):

\begin{pro}\label{pro:PSPabstract}
Let $E$ be a Banach lattice. The following statements are equivalent:
\begin{enumerate}
\item[(i)] $E$ has the positive Schur property.
\item[(ii)] Every disjoint weakly null sequence in~$E$ is norm null.
\item[(iii)] Every disjoint weakly null sequence in~$E^+$ is norm null.
\item[(iv)] Every relatively weakly compact subset of~$E$ is L-weakly compact. 
\end{enumerate}
\end{pro}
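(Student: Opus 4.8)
The plan is to prove the equivalence of the four statements by establishing a cycle of implications, treating the genuinely new content as the passage between the Schur-type conditions (i)--(iii) and the L-weakly compactness condition (iv). The implications (i) $\impli$ (ii) $\impli$ (iii) are the easy direction and I would dispatch them first. For (i) $\impli$ (ii), let $(x_n)$ be a disjoint weakly null sequence in~$E$; since the positive and negative parts $(x_n^+)$ and $(x_n^-)$ are themselves disjoint sequences in~$E^+$, and because a disjoint weakly null sequence has disjoint weakly null positive and negative parts (the lattice operations are weakly sequentially continuous on disjoint sequences, or one argues directly that $x_n^+ = x_n \vee 0$ stays weakly null), the positive Schur property forces $\|x_n^+\| \to 0$ and $\|x_n^-\| \to 0$, whence $\|x_n\| \to 0$. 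The implication (ii) $\impli$ (iii) is trivial since $E^+ \sub E$.

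The substantive work lies in closing the cycle, and the plan is to route it through (iii) $\impli$ (iv) $\impli$ (i). For (iv) $\impli$ (i), suppose every relatively weakly compact set is L-weakly compact, and let $(x_n)$ be a weakly null sequence in~$E^+$. Then $W:=\{x_n : n\in\N\}$ is relatively weakly compact, hence L-weakly compact by hypothesis. Since L-weakly compact sets consist of ``uniformly small'' elements in a disjointness sense, I would invoke the standard fact that every weakly null sequence drawn from an L-weakly compact set is norm null; more carefully, an L-weakly compact set $W$ has the property that $\|x_n\|\to 0$ for any weakly null $(x_n)$ in~${\rm Sol}(W)$, which follows from the characterization of L-weakly compactness in terms of disjoint sequences together with the fact that weakly null sequences can be ``disjointified'' up to small perturbations. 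This yields $\|x_n\|\to 0$, giving the positive Schur property.

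The hard part will be the implication (iii) $\impli$ (iv), where one must manufacture, from an arbitrary relatively weakly compact set~$W$ that is \emph{not} L-weakly compact, a disjoint weakly null sequence in~$E^+$ that fails to be norm null, contradicting~(iii). The plan is: if $W$ is not L-weakly compact then, since it is bounded, there is a disjoint sequence $(y_n)$ in~${\rm Sol}(W)$ with $\inf_n \|y_n\| > 0$. Replacing $y_n$ by $|y_n|$ preserves disjointness, keeps the norms bounded below, and places the sequence in~$E^+$; moreover $(|y_n|)$ still lies in~${\rm Sol}(W)$. The key point is that this disjoint sequence is automatically weakly null: because $W$ is relatively weakly compact, the cited result (\cite[Theorem~4.34]{ali-bur}) guarantees that \emph{every} disjoint sequence in~${\rm Sol}(W)$ is weakly null. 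Thus $(|y_n|)$ is a disjoint weakly null sequence in~$E^+$ with $\inf_n\||y_n|\| > 0$, directly contradicting~(iii). The delicate step to get right is the selection of the disjoint sequence bounded away from zero from the failure of L-weak compactness, and the verification that the relative weak compactness of~$W$ supplies weak nullity for free; these two facts together are exactly what make the equivalence work.
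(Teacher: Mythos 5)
Your cycle (i)$\Rightarrow$(ii)$\Rightarrow$(iii)$\Rightarrow$(iv)$\Rightarrow$(i) is a reasonable structure, and your step (iii)$\Rightarrow$(iv) is correct and essentially complete: failure of L-weak compactness of the bounded set $W$ yields (after passing to a subsequence) a disjoint sequence in ${\rm Sol}(W)$ with norms bounded below; taking moduli keeps it disjoint, positive and in ${\rm Sol}(W)$; and \cite[Theorem~4.34]{ali-bur} makes it weakly null, contradicting (iii). A minor repair is needed in (i)$\Rightarrow$(ii): lattice operations are \emph{not} weakly sequentially continuous (the Rademacher functions satisfy $r_n\to 0$ weakly in $L_1[0,1]$ while $r_n^+\to\frac{1}{2}\chi_{[0,1]}$ weakly), so there is no ``direct'' argument that $x_n^+$ stays weakly null; the statement you need is true for disjoint sequences, but its justification is again \cite[Theorem~4.34]{ali-bur} applied to the relatively weakly compact set $\{x_n:n\in\N\}$, of whose solid hull $(x_n^+)$ is a disjoint sequence (or, more simply, work with $(|x_n|)$ throughout).

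The genuine gap is in (iv)$\Rightarrow$(i). The ``standard fact'' you invoke --- that every weakly null sequence drawn from an L-weakly compact set, or from its solid hull, is norm null --- is false. Take $E=L_1[0,1]$ and $W=[-\chi_{[0,1]},\chi_{[0,1]}]$: then $W={\rm Sol}(W)$ is L-weakly compact (a disjoint sequence in $W$ has pairwise disjoint supports whose measures sum to at most $1$, so its $L_1$-norms tend to $0$), yet the Rademacher sequence lies in $W$, is weakly null, and has norm $1$. Observe that your argument never uses positivity of $(x_n)$, so if it were valid it would show that (iv) implies the full Schur property --- which $L_1[0,1]$, a space satisfying (iv), fails. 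The same example defeats your fallback claim that weakly null sequences can be ``disjointified'' up to small perturbations: every disjoint sequence in ${\rm Sol}(W)$ is norm null, so no disjoint sequence stays close to $(r_n)$. What is true, and what this implication actually requires, is the disjointification lemma for \emph{positive} sequences: every weakly null sequence $(x_n)$ in $E^+$ has a subsequence $(x_{n_k})$ for which there is a disjoint sequence $(y_k)$ with $0\leq y_k\leq x_{n_k}$ and $\|x_{n_k}-y_k\|\to 0$. Granting this, $(y_k)$ is a disjoint sequence in ${\rm Sol}(\{x_n:n\in\N\})$, hence norm null by (iv), hence $\|x_{n_k}\|\to 0$, and a standard subsequence argument yields $\|x_n\|\to 0$. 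This positive-only lemma is the nontrivial heart of the proposition --- it is exactly what separates the positive Schur property from the Schur property --- and it cannot be replaced by the general (false) statement you propose. For comparison: the paper does not prove this proposition at all, but quotes it from the literature, citing \cite[Corollaries~2.3.5 and~3.6.8]{mey2}, \cite[Theorem~1.16]{san-hen} and \cite[Lemma~3]{wnu2}, which is where the missing lemma is supplied.
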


Proposition~\ref{pro:Lweaklycompact} below characterizes L-weakly compact sets in the $L_1$ space of a vector measure. We first need to introduce
some terminology. Given $f\in L_1(\nu)$, the map 
$\nu_f:\Sigma \to X$ defined by
$$
	\quad \nu_f(A):=I_\nu(f\chi_A)=\int_A f \, d\nu
	\quad\text{for all $A\in \Sigma$} 
$$
is a countably additive vector measure by the Orlicz-Pettis theorem (see, e.g., \cite[p.~22, Corollary~4]{die-uhl-J}). Note that $\|\nu_f\|(A)=\|f\chi_A\|_{L_1(\nu)}$
for all $A\in \Sigma$. Moreover, $\nu_f$ is {\em $\mu$-continuous}, that is,
for every $\epsilon>0$ there is $\delta>0$ such that 
$\|\nu_f(A)\|\leq \epsilon$ 
for every $A\in \Sigma$ with $\mu(A)\leq \delta$
(see, e.g., \cite[p.~10, Theorem~1]{die-uhl-J}). 
A set $F \sub L_1(\nu)$ is said to be {\em equi-integrable} if the set $\{\nu_f:f\in F\}$ is
{\em uniformly $\mu$-continuous}, that is, for every $\epsilon>0$
there is $\delta>0$ such that 
$$
	\sup_{f\in F}\|\nu_f(A)\| \leq \epsilon
	\quad\text{for every $A\in \Sigma$ with $\mu(A)\leq \delta$}.
$$
The following result can be found in \cite[Proposition~3.6.2]{mey2} and \cite[Lemma~2.37]{oka-alt}
within a more general framework. 

\begin{pro}\label{pro:Lweaklycompact}
Let $F \sub L_1(\nu)$ be a set. The following statements are equivalent:
\begin{enumerate}
\item[(i)] $F$ is L-weakly compact.
\item[(ii)] $F$ is bounded and equi-integrable. 
\item[(iii)] $F$ is {\em approximately order bounded}, i.e., for every $\epsilon>0$ there is $\rho>0$ such that 
$F \sub j_\nu(\rho B_{L_\infty(\nu)})+\epsilon B_{L_1(\nu)}$. 
\end{enumerate}
\end{pro}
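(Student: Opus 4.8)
The plan is to establish the cycle of implications (i)$\impli$(ii)$\impli$(iii)$\impli$(i), using throughout the identity $\|\nu_f\|(A)=\|f\chi_A\|_{L_1(\nu)}$ recorded above. One preliminary reduction is convenient. For any vector measure $m$ one has, via a Hahn decomposition of $x^*m$ for each $x^*\in B_{X^*}$, the estimate $\sup_{B\sub A}\|m(B)\|\le \|m\|(A)\le 2\sup_{B\sub A}\|m(B)\|$. Consequently the equi-integrability of $F$, phrased in terms of the \emph{values} $\nu_f(A)$, is equivalent to the uniform $\mu$-continuity of the \emph{semivariations} $A\mapsto\|\nu_f\|(A)=\|f\chi_A\|_{L_1(\nu)}$, and I would switch to the latter form whenever it is handier. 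I present the routine implications first and reserve the delicate one for last.

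For (iii)$\impli$(i): given $\epsilon>0$, pick $\rho>0$ with $F\sub \rho[-\chi_\Omega,\chi_\Omega]+\epsilon B_{L_1(\nu)}$, recalling $\rho[-\chi_\Omega,\chi_\Omega]=j_\nu(\rho B_{L_\infty(\nu)})$. A routine truncation argument shows that ${\rm Sol}(F)$ inherits the same inclusion (if $|y|\le|f|$ and $f=a+b$ with $|a|\le\rho\chi_\Omega$, then the truncation of $y$ to the interval differs from $y$ by at most $|b|$ in modulus), so it suffices to treat a disjoint sequence $(g_n)$ in ${\rm Sol}(F)$. Setting $a_n:=(-\rho\chi_\Omega)\vee(g_n\wedge \rho\chi_\Omega)$ yields a disjoint sequence with $|a_n|\le \rho\chi_\Omega$ and $\|g_n-a_n\|_{L_1(\nu)}\le \epsilon$. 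Since $L_1(\nu)$ has order continuous norm, the order bounded disjoint sequence $(a_n)$ is norm null (see, e.g., \cite{ali-bur}), whence $\limsup_n\|g_n\|_{L_1(\nu)}\le\epsilon$; as $\epsilon$ was arbitrary, $F$ is L-weakly compact (boundedness being clear from (iii)).

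For (ii)$\impli$(iii): after normalizing the Rybakov control measure we may assume $\mu=|x_0^*\nu|$ with $x_0^*\in B_{X^*}$, so that $\int_\Omega|f|\,d\mu\le\|f\|_{L_1(\nu)}\le M:=\sup_{f\in F}\|f\|_{L_1(\nu)}$. Fix $\epsilon>0$ and take $\delta>0$ from the semivariation form of equi-integrability. Choosing $\rho>M/\delta$, Chebyshev's inequality gives $\mu(\{|f|>\rho\})\le M/\rho<\delta$ uniformly in $f\in F$, hence $\|f\chi_{\{|f|>\rho\}}\|_{L_1(\nu)}=\|\nu_f\|(\{|f|>\rho\})\le\epsilon$ for all $f\in F$. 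The truncation $f_\rho:=(-\rho\chi_\Omega)\vee(f\wedge\rho\chi_\Omega)$ lies in $\rho[-\chi_\Omega,\chi_\Omega]$ and satisfies $\|f-f_\rho\|_{L_1(\nu)}\le \|f\chi_{\{|f|>\rho\}}\|_{L_1(\nu)}\le\epsilon$, which is exactly the approximate order boundedness in (iii).

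The implication (i)$\impli$(ii) is the heart of the matter, and the disjointification it requires is where I expect the main difficulty. Arguing by contraposition, suppose $F$ is not equi-integrable: there are $\epsilon_0>0$, functions $f_n\in F$ and sets $A_n\in\Sigma$ with $\mu(A_n)\to 0$ and $\|f_n\chi_{A_n}\|_{L_1(\nu)}>\epsilon_0$. Put $w_n:=|f_n|\chi_{A_n}\in {\rm Sol}(F)^+$, so $\|w_n\|_{L_1(\nu)}>\epsilon_0$ and $w_n$ is supported on $A_n$. I would build indices $n_1<n_2<\cdots$ by a gliding hump: at stage $j$, having fixed $w_{n_1},\dots,w_{n_{j-1}}$, I use that each of these finitely many functions generates a $\mu$-continuous measure $\nu_{w_{n_i}}$ to choose $n_j$ with $\mu(A_{n_j})$ so small that $\|w_{n_i}\chi_{A_{n_j}}\|_{L_1(\nu)}<\epsilon_0/2^{j}$ for every $i<j$ (possible since $\mu(A_n)\to 0$ and one takes the minimum of finitely many moduli of continuity). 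Setting $C_i:=A_{n_i}\setminus\bigcup_{j>i}A_{n_j}$ produces pairwise disjoint sets with
$$
\|w_{n_i}\chi_{C_i}\|_{L_1(\nu)}\ge \|w_{n_i}\|_{L_1(\nu)}-\sum_{j>i}\|w_{n_i}\chi_{A_{n_j}}\|_{L_1(\nu)}>\epsilon_0-\epsilon_0/2^{i}\ge \epsilon_0/2 .
$$
The functions $w_{n_i}\chi_{C_i}$ are pairwise disjoint and lie in ${\rm Sol}(F)$, so a sequence in the solid hull with norms bounded below contradicts the L-weak compactness of $F$. The delicate point is to arrange the decay $\epsilon_0/2^{j}$ in the \emph{later} index, so that the tail sum $\sum_{j>i}\epsilon_0/2^{j}$ is controlled; crucially this uses only the $\mu$-continuity of each individual measure $\nu_{w_{n_i}}$ and never a uniform version, which is precisely what one may not assume in this direction.
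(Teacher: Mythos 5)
Your proof is correct, but it is worth pointing out that the paper does not actually prove this proposition: it quotes it from \cite[Proposition~3.6.2]{mey2} and \cite[Lemma~2.37]{oka-alt}, where the result is obtained within the more general framework of Banach lattices/Banach function spaces. Your argument is thus a self-contained specialization to $L_1(\nu)$, and all three implications check out. The preliminary factor-of-two comparison $\sup_{B\subseteq A}\|m(B)\|\le\|m\|(A)\le 2\sup_{B\subseteq A}\|m(B)\|$ (valid here since the spaces are real) correctly legitimizes passing between the paper's definition of equi-integrability, phrased via the values $\nu_f(A)$, and the semivariation form $\|f\chi_A\|_{L_1(\nu)}$. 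In (ii)$\Rightarrow$(iii), Chebyshev's inequality for a normalized Rybakov measure together with $\int_\Omega|f|\,d\mu\le\|f\|_{L_1(\nu)}$ and truncation gives exactly the approximate order boundedness. In (iii)$\Rightarrow$(i), the truncation argument passing the inclusion to ${\rm Sol}(F)$ is right, and the reduction to the fact that order bounded disjoint sequences are norm null in a Banach lattice with order continuous norm (which $L_1(\nu)$ has) is the standard and correct conclusion. In (i)$\Rightarrow$(ii), your gliding hump is sound: the sets $C_i:=A_{n_i}\setminus\bigcup_{j>i}A_{n_j}$ are pairwise disjoint, the functions $w_{n_i}\chi_{C_i}$ stay in ${\rm Sol}(F)$, and the tail estimate $\sum_{j>i}\epsilon_0 2^{-j}=\epsilon_0 2^{-i}$ yields the uniform lower bound $\epsilon_0/2$; as you note, only the individual $\mu$-continuity of each $\nu_{w_{n_i}}$ is used, which is what makes the contrapositive legitimate. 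The single step left tacit is the countable subadditivity $\|w\chi_{\bigcup_{j>i}A_{n_j}}\|_{L_1(\nu)}\le\sum_{j>i}\|w\chi_{A_{n_j}}\|_{L_1(\nu)}$, which follows at once from the definition of the norm as a supremum of integrals against the measures $|x^*\nu|$. What your route buys, compared with the paper's reliance on the literature, is a proof that never leaves $L_1(\nu)$ and uses only tools already set up in the paper (Rybakov control measures, the identity $\|\nu_f\|(A)=\|f\chi_A\|_{L_1(\nu)}$, order continuity of the norm), at the cost of re-proving in this special case what the cited references establish for general Banach function spaces.
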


We refer the reader to the recent works \cite{bot-alt,che,les-alt} for further results related to the positive Schur property in Banach lattices.

\subsection{Characterization of Dunford-Pettis integration operators}
Proposition~\ref{pro:charDP} below was first proved in \cite[Theorem~5.8]{cal-alt-6}.
We include here a more direct proof for the reader's convenience. One part follows the argument used in \cite[Theorem~4]{cur3} to show that $L_1(\nu)$
has the positive Schur property if $X$ has the Schur property. The following auxiliary lemma will also be used later.

\begin{lem}\label{lem:equi}
Let $(f_n)_{n\in \N}$ be a sequence in~$L_1(\nu)$ such that the sequence $(\nu_{f_n}(A))_{n\in \N}$ is norm convergent for every $A\in \Sigma$. 
Then $(f_n)_{n\in \N}$ is equi-integrable.
\end{lem}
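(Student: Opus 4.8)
The statement is precisely the vector-valued Vitali--Hahn--Saks theorem applied to the sequence of measures $(\nu_{f_n})_{n\in \N}$, so one option is to invoke that classical result directly (see, e.g., \cite{die-uhl-J}). My plan instead is to give the self-contained Baire category argument that underlies it, since it is short and keeps the presentation closed.

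First I would set up the right complete space. On the quotient $\Sigma/\mathcal{N}(\mu)$ I put the metric $d(A,B):=\mu(A\,\triangle\,B)$, which is complete because $\mu$ is a finite measure. The key preliminary observation is that, for each fixed $n$, the map $A\mapsto \nu_{f_n}(A)$ is $d$-continuous: one has
$$
\|\nu_{f_n}(A)-\nu_{f_n}(B)\|\leq \|\nu_{f_n}(A\setminus B)\|+\|\nu_{f_n}(B\setminus A)\|,
$$
and both $\mu(A\setminus B),\mu(B\setminus A)\leq d(A,B)$, so $d$-continuity is exactly the $\mu$-continuity of~$\nu_{f_n}$ recorded before the lemma.

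Fix $\epsilon>0$. I would then consider the sets
$$
E_m:=\bigl\{A\in \Sigma:\ \|\nu_{f_i}(A)-\nu_{f_j}(A)\|\leq \epsilon \ \text{ for all }\ i,j\geq m\bigr\},
$$
each of which is $d$-closed by the continuity just noted (it is an intersection of closed sets), and whose union is all of~$\Sigma$ because $(\nu_{f_n}(A))_{n\in \N}$ is Cauchy for every~$A\in \Sigma$. By the Baire category theorem some $E_{m_0}$ has nonempty interior, hence contains a ball $\{A:\ d(A,A_0)<r\}$ for suitable $A_0\in \Sigma$ and $r>0$. The crucial computation is then the passage from this ball to arbitrary small sets: given $A\in \Sigma$ with $\mu(A)<r$, both $A_0\cup A$ and $A_0\setminus A$ lie in the ball, since $(A_0\cup A)\,\triangle\,A_0=A\setminus A_0\sub A$ and $(A_0\setminus A)\,\triangle\,A_0=A_0\cap A\sub A$; and because $(A_0\cup A)\setminus(A_0\setminus A)=A$, additivity yields $\nu_{f_i}(A)=\nu_{f_i}(A_0\cup A)-\nu_{f_i}(A_0\setminus A)$. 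Therefore $\|\nu_{f_i}(A)-\nu_{f_j}(A)\|\leq 2\epsilon$ for all $i,j\geq m_0$ whenever $\mu(A)<r$.

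Finally I would combine this with the individual $\mu$-continuity of the finitely many measures $\nu_{f_1},\dots,\nu_{f_{m_0}}$: choosing $\delta\leq r$ small enough that $\|\nu_{f_k}(A)\|\leq \epsilon$ for all $k\leq m_0$ and all $A$ with $\mu(A)<\delta$, we obtain $\|\nu_{f_n}(A)\|\leq 3\epsilon$ for \emph{every} $n\in \N$ whenever $\mu(A)<\delta$ (splitting into $n\leq m_0$ and $n>m_0$ and comparing with $\nu_{f_{m_0}}$). As $\epsilon>0$ was arbitrary, this is exactly the uniform $\mu$-continuity of $\{\nu_{f_n}:n\in \N\}$, that is, the equi-integrability of $(f_n)_{n\in \N}$. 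The main obstacle I anticipate is the Baire step together with the subsequent set-algebra manipulation around the ball $B(A_0,r)$; once $d$-continuity of each $\nu_{f_n}$ and completeness of $(\Sigma/\mathcal{N}(\mu),d)$ are in place, the rest is routine bookkeeping.
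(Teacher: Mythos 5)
Your proposal is correct, and its opening sentence is precisely the paper's entire proof: the paper disposes of the lemma in one line by applying the Vitali--Hahn--Saks theorem (citing Diestel--Uhl) to the sequence of $\mu$-continuous vector measures $(\nu_{f_n})_{n\in\N}$. Where you differ is that you then unpack that black box and give the classical Baire-category proof of Vitali--Hahn--Saks in this setting, and your rendition is sound: the measure algebra $(\Sigma/\mathcal{N}(\mu),d)$ is indeed complete (e.g.\ via the isometric identification $A\mapsto\chi_A$ with a closed subset of $L_1(\mu)$), each $\nu_{f_n}$ is $d$-continuous exactly because of the $\mu$-continuity recorded before the lemma, the sets $E_m$ are closed and cover the space since pointwise norm convergence gives pointwise Cauchyness, and the set-algebra step ($A_0\cup A$ and $A_0\setminus A$ both lie in the ball, with $\nu_{f_i}(A)=\nu_{f_i}(A_0\cup A)-\nu_{f_i}(A_0\setminus A)$) correctly transfers uniform smallness from a neighborhood of $A_0$ to all sets of small measure; combining with the $\mu$-continuity of the finitely many $\nu_{f_1},\dots,\nu_{f_{m_0}}$ yields uniform $\mu$-continuity, which is the paper's definition of equi-integrability. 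The trade-off is the expected one: the paper's citation is shorter and leans on a standard reference, while your argument makes the lemma self-contained at the cost of re-proving a classical theorem; mathematically the two are the same reduction, so nothing is gained in generality, only in transparency.
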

\begin{proof} This follows from the Vitali-Hahn-Saks theorem
(see, e.g., \cite[p.~24, Corollary~10]{die-uhl-J}) applied to the sequence of $\mu$-continuous
vector measures $(\nu_{f_n})_{n\in \N}$.
\end{proof}

\begin{pro}\label{pro:charDP}
The following statements are equivalent:
\begin{enumerate}
\item[(i)] $L_1(\nu)$ has the positive Schur property and $\mathcal{R}(\nu)$ is relatively norm compact.
\item[(ii)] $I_\nu$ is Dunford-Pettis.
\end{enumerate}
\end{pro}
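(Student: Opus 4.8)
The plan is to prove the two implications separately, leaning on the characterizations of the positive Schur property and of L-weakly compact sets in Propositions~\ref{pro:PSPabstract} and~\ref{pro:Lweaklycompact}, the compactness criterion of Proposition~\ref{pro:compactrange}, and the equi-integrability Lemma~\ref{lem:equi}. Throughout I would use that $I_\nu$ is a norm-one (hence weak-to-weak continuous) operator and that $j_\nu(B_{L_\infty(\nu)})=[-\chi_\Omega,\chi_\Omega]$ is weakly compact in~$L_1(\nu)$.

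For (i)$\impli$(ii) I would take a weakly null sequence $(f_n)_{n\in\N}$ in~$L_1(\nu)$ and aim at $\|I_\nu(f_n)\|\to 0$. Since a weakly convergent sequence is relatively weakly compact, the positive Schur property makes $\{f_n:n\in\N\}$ L-weakly compact (Proposition~\ref{pro:PSPabstract}(iv)), hence approximately order bounded (Proposition~\ref{pro:Lweaklycompact}). Fixing $\epsilon>0$ and choosing the associated $\rho>0$, I would pass to the truncations $f_n^\rho:=(-\rho\chi_\Omega)\vee(f_n\wedge\rho\chi_\Omega)\in j_\nu(\rho B_{L_\infty(\nu)})$; writing $f_n=u_n+v_n$ with $|u_n|\le\rho\chi_\Omega$ and $\|v_n\|_{L_1(\nu)}\le\epsilon$, the lattice inequality $|f_n-f_n^\rho|=(|f_n|-\rho\chi_\Omega)^+\le|v_n|$ gives $\|f_n-f_n^\rho\|_{L_1(\nu)}\le\epsilon$ for all~$n$. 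Applying $I_\nu$, the vectors $I_\nu(f_n^\rho)$ lie in $\rho\,I_\nu(j_\nu(B_{L_\infty(\nu)}))$, which is norm compact by Proposition~\ref{pro:compactrange} and the hypothesis on~$\mathcal R(\nu)$, while $\|I_\nu(f_n-f_n^\rho)\|\le\epsilon$. Thus $\{I_\nu(f_n):n\in\N\}$ lies within $\epsilon$ of a totally bounded set; as $\epsilon$ is arbitrary it is itself totally bounded, i.e.\ relatively norm compact, and being weakly null it must be norm null.

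For (ii)$\impli$(i) I would first obtain relative norm compactness of~$\mathcal R(\nu)$: given a sequence in $I_\nu(j_\nu(B_{L_\infty(\nu)}))$, pick preimages in the weakly compact, weakly closed set $[-\chi_\Omega,\chi_\Omega]$, extract by Eberlein-\v{S}mulian a weakly convergent subsequence, and apply the Dunford-Pettis property to get norm convergence of the images; hence $I_\nu(j_\nu(B_{L_\infty(\nu)}))$ is norm compact and Proposition~\ref{pro:compactrange} applies. For the positive Schur property I would follow the scheme of \cite[Theorem~4]{cur3}: let $(f_n)_{n\in\N}$ be a disjoint weakly null sequence. For each $A\in\Sigma$ the sequence $(f_n\chi_A)_{n\in\N}$ is again weakly null (band projection), so $\nu_{f_n}(A)=I_\nu(f_n\chi_A)\to 0$ in norm by the Dunford-Pettis property. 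Lemma~\ref{lem:equi} then yields that $(f_n)_{n\in\N}$ is equi-integrable; being also bounded, $\{f_n:n\in\N\}$ is L-weakly compact by Proposition~\ref{pro:Lweaklycompact}. Since $(f_n)_{n\in\N}$ is a disjoint sequence contained in the solid hull of this L-weakly compact set, it is norm null, and Proposition~\ref{pro:PSPabstract}(ii) gives the positive Schur property.

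The main obstacle is the nonlinearity of truncation in the first implication: weak nullity of $(f_n)_{n\in\N}$ does not transfer to $(f_n^\rho)_{n\in\N}$, so it cannot be used there. The resolution is to exploit approximate order boundedness purely to confine the images $I_\nu(f_n)$ to an $\epsilon$-neighbourhood of the norm-compact set $\rho\,I_\nu(j_\nu(B_{L_\infty(\nu)}))$, thereby proving relative norm compactness of $\{I_\nu(f_n):n\in\N\}$, and only then invoke weak nullity to pin the limit at~$0$. In the converse, the crux is that the Dunford-Pettis property upgrades the a priori merely weak convergence of $\nu_{f_n}(A)$ to norm convergence for every~$A$, which is exactly the hypothesis of Lemma~\ref{lem:equi}; this is the precise point where the Dunford-Pettis assumption plays the role that the Schur property of~$X$ plays in Curbera's argument.
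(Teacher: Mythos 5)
Your proposal is correct and takes essentially the same approach as the paper: both directions rest on the same ingredients (Propositions~\ref{pro:PSPabstract}, \ref{pro:Lweaklycompact} and~\ref{pro:compactrange} together with Lemma~\ref{lem:equi}), with only cosmetic differences. In (i)$\impli$(ii) you confine the images of a weakly null sequence, via explicit truncations, to an $\epsilon$-neighbourhood of the norm compact set $\rho\, I_\nu(j_\nu(B_{L_\infty(\nu)}))$ where the paper does the same for a general relatively weakly compact set $F$, and in (ii)$\impli$(i) you verify equi-integrability only for disjoint weakly null sequences where the paper does so for all weakly convergent ones; both variants are valid and interchangeable.
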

\begin{proof}
(i)$\impli$(ii): Let $F \sub L_1(\nu)$ be a relatively weakly compact set. We will show that $I_\nu(F)$ is relatively norm compact by checking that
for each $\epsilon>0$ there is a norm compact set $K_\epsilon \sub X$ such that $I_\nu(F) \sub K_\epsilon+\epsilon B_X$. 
Fix $\epsilon>0$. Since $F$ is approximately order bounded (by Propositions~\ref{pro:PSPabstract} and~\ref{pro:Lweaklycompact}), there is
$\rho>0$ such that $F \sub j_\nu(\rho B_{L_\infty(\nu)})+\epsilon B_{L_1(\nu)}$. Therefore,
$I_\nu(F) \sub K_\epsilon+\epsilon B_{X}$,
where $K_\epsilon:= I_\nu(j_\nu(\rho B_{L_\infty(\nu)}))$ is norm compact by Proposition~\ref{pro:compactrange}.

(ii)$\impli$(i): Since $j_\nu(B_{L_\infty(\nu)})$ is weakly compact in~$L_1(\nu)$ (see the paragraph preceding Proposition~\ref{pro:compactrange})
and $I_\nu$ is Dunford-Pettis, 
the set $I_\nu(j_\nu(B_{L_\infty(\nu)}))$ is norm compact and so $\mathcal{R}(\nu)$ is relatively norm compact (Proposition~\ref{pro:compactrange}). 
To prove that $L_1(\nu)$ has the positive Schur property it suffices to check that every weakly convergent sequence
is equi-integrable (Propositions~\ref{pro:PSPabstract} and~\ref{pro:Lweaklycompact}). Let $(f_n)_{n\in \N}$ be a sequence in~$L_1(\nu)$ which converges 
weakly to some~$f\in L_1(\nu)$. Then for each $A\in \Sigma$ the sequence $(f_n\chi_A)_{n\in \N}$ converges
weakly to~$f\chi_A$ in~$L_1(\nu)$ (bear in mind that the map $h \mapsto h\chi_A$ is an operator on~$L_1(\nu)$). Since $I_\nu$ is Dunford-Pettis,
for each $A\in \Sigma$ the sequence $(I_{\nu}(f_n\chi_A))_{n\in \N}=(\nu_{f_n}(A))_{n\in \N}$ converges in norm to~$I_{\nu}(f\chi_A)=\nu_f(A)$.
Now, Lemma~\ref{lem:equi} applies to conclude that $(f_n)_{n\in \N}$ is equi-integrable.
\end{proof}

\subsection{The ``vector duality'' induced by the integration operator}\label{subsection:vectorduality}

The following result (see, e.g., \cite[Proposition~3.31]{oka-alt}) shows, in particular, that we have a continuous bilinear map
$$
	L_1(\nu)\times L_\infty(\nu)\to X
$$
defined by
$$
	\quad (f,g) \mapsto I_\nu(fg)=\int_\Omega fg \, d\nu.
$$

\begin{pro}\label{pro:norming}
Let $f\in L_1(\nu)$. Then:
\begin{enumerate}
\item[(i)] For every $g\in L_\infty(\nu)$ the product $fg\in L_1(\nu)$ and 
$$
	\|fg\|_{L_1(\nu)}\leq \|f\|_{L_1(\nu)}\|g\|_{L_\infty(\nu)}.
$$
\item[(ii)] The norm of~$f$ in $L_1(\nu)$ is 
$$
	\|f\|_{L_1(\nu)}=\sup_{g\in B_{L_\infty(\nu)}}\|I_\nu(fg)\|_X.
$$ 
\end{enumerate}
\end{pro}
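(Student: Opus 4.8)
The plan is to prove Proposition~\ref{pro:norming} by separating the two assertions and exploiting the defining ``$\sup$ over $B_{X^*}$'' description of the $L_1(\nu)$ norm together with the action of the integration operator on products.

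For part (i), the first task is to verify that $fg$ is $\nu$-integrable whenever $f\in L_1(\nu)$ and $g\in L_\infty(\nu)$. Since the $\Sigma$-simple functions are norm dense in $L_1(\nu)$, I would first treat the case where $g$ is $\Sigma$-simple (where integrability of $fg$ is immediate, being a finite linear combination of truncations $f\chi_A$), and then pass to a general $g\in L_\infty(\nu)$ by approximating it uniformly by $\Sigma$-simple functions $g_k$ with $\|g_k\|_{L_\infty(\nu)}\le\|g\|_{L_\infty(\nu)}$. For the norm bound, the natural route is the scalar description of the norm: for each $x^*\in B_{X^*}$ one has, by the pointwise estimate $|fg|\le|f|\,\|g\|_{L_\infty(\nu)}$ holding $\nu$-a.e.\ (equivalently $|x^*\nu|$-a.e.\ for every $x^*$), the inequality
$$
	\int_\Omega |fg|\,d|x^*\nu| \le \|g\|_{L_\infty(\nu)}\int_\Omega |f|\,d|x^*\nu|.
$$
Taking the supremum over $x^*\in B_{X^*}$ yields $\|fg\|_{L_1(\nu)}\le\|f\|_{L_1(\nu)}\,\|g\|_{L_\infty(\nu)}$, which also confirms $fg\in L_1(\nu)$ in the limiting argument.

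For part (ii), the inequality ``$\ge$'' is the routine direction: for any $g\in B_{L_\infty(\nu)}$, applying any $x^*\in B_{X^*}$ to $I_\nu(fg)$ gives $|x^*(I_\nu(fg))|=|\int_\Omega fg\,d(x^*\nu)|\le\int_\Omega|f|\,d|x^*\nu|\le\|f\|_{L_1(\nu)}$, so $\|I_\nu(fg)\|_X\le\|f\|_{L_1(\nu)}$ and hence the supremum is at most $\|f\|_{L_1(\nu)}$. The substantive direction is ``$\le$'', where I must recover the full $L_1(\nu)$ norm from the vector-valued quantities $\|I_\nu(fg)\|_X$. The idea is to pick an almost-optimal functional: given $\epsilon>0$, choose $x^*\in B_{X^*}$ with $\int_\Omega|f|\,d|x^*\nu|\ge\|f\|_{L_1(\nu)}-\epsilon$, and then take $g$ to be a $\nu$-measurable choice of the sign (Radon--Nikod\'ym density direction) of $f$ with respect to $x^*\nu$, so that $g\in B_{L_\infty(\nu)}$ and $\int_\Omega fg\,d(x^*\nu)=\int_\Omega|f|\,d|x^*\nu|$. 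Then $\|I_\nu(fg)\|_X\ge x^*(I_\nu(fg))\ge\|f\|_{L_1(\nu)}-\epsilon$, and letting $\epsilon\to 0$ finishes the proof.

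The main obstacle is the construction of the witnessing $g$ in the hard direction: one needs a single $\nu$-essentially bounded function $g$ whose pairing against $x^*\nu$ realizes $\int_\Omega|f|\,d|x^*\nu|$, and the honest way to produce it is via the Radon--Nikod\'ym derivative $\frac{d(x^*\nu)}{d|x^*\nu|}$, which takes values in $\{-1,+1\}$ $|x^*\nu|$-a.e. Setting $g$ equal to this sign function (suitably interpreted as an element of $L_\infty(\nu)=L_\infty(\mu)$) gives $fg\cdot\operatorname{sign}$ the correct nonnegative integrand. The only delicacy is that $g$ is defined via a single $x^*$ rather than intrinsically on $L_\infty(\nu)$; but since $|x^*\nu|\ll\mu$ and $L_\infty(\nu)=L_\infty(\mu)$, the function $g$ is a legitimate element of $B_{L_\infty(\nu)}$, and measurability and the essential bound are automatic. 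Once this sign function is in hand, the remaining computations are the elementary scalar integral identities noted above.
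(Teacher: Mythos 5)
The first thing to note is that the paper contains no proof of this proposition: it is stated as a known preliminary fact with a citation to \cite[Proposition~3.31]{oka-alt}, so there is no in-paper argument to compare yours against; what follows judges your proposal on its own merits. Your proof is correct, and it is essentially the standard argument from the literature. In part~(i) the only point you gloss over is the identification of the $L_1(\nu)$-limit of $(fg_k)_{k\in\N}$ with $fg$: the estimate for $\Sigma$-simple multipliers makes $(fg_k)_{k\in\N}$ Cauchy and completeness of $L_1(\nu)$ yields a limit $h$, but to see that $h=fg$ $\nu$-a.e.\ you should observe that $L_1(\nu)$-convergence implies $L_1(\mu)$-convergence for a Rybakov control measure $\mu$ of~$\nu$, pass to a subsequence converging $\mu$-a.e.\ (equivalently $\nu$-a.e.), and compare with the pointwise convergence $fg_k\to fg$. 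This limiting step is what actually produces the vectors $\int_A fg\,d\nu\in X$ required by the definition of $\nu$-integrability; the scalar estimates alone only give $|x^*\nu|$-integrability for each $x^*$, which is strictly weaker. A minor wording slip: $\nu$-a.e.\ implies $|x^*\nu|$-a.e.\ for every $x^*$, but the two notions are not ``equivalent'' unless $|x^*\nu|$ is a control measure of~$\nu$; only the stated implication is needed, so no harm results.

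In part~(ii) both directions are sound: the upper bound is Hahn--Banach together with $|fg|\le|f|$ holding $|x^*\nu|$-a.e., and your witness $g={\rm sign}(f)\,\frac{d(x^*\nu)}{d|x^*\nu|}$ (equivalently, $g={\rm sign}(f)(\chi_{A^+}-\chi_{A^-})$ for a Hahn decomposition $\Omega=A^+\cup A^-$ of the signed measure $x^*\nu$) gives $x^*\big(I_\nu(fg)\big)=\int_\Omega|f|\,d|x^*\nu|\geq\|f\|_{L_1(\nu)}-\epsilon$. The delicacy you raise about $g$ being determined only $|x^*\nu|$-a.e.\ is resolved exactly as you indicate: fix any globally defined $\Sigma$-measurable version with values in $[-1,1]$ (extend by~$1$ on the exceptional $|x^*\nu|$-null set); any such version is a legitimate element of $B_{L_\infty(\nu)}$, and the integrals against $x^*\nu$ and $|x^*\nu|$ are unaffected by the choice.
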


There are some elements of $L_1(\nu)^*$ which admit a simple description and are helpful for dealing
with the weak topology of~$L_1(\nu)$. By Proposition~\ref{pro:norming}, for each $(g,x^*)\in B_{L_\infty(\nu)}\times B_{X^*}$ we can
define a functional $\gamma_{(g,x^*)} \in B_{L_1(\nu)^*}$ by the formula
$$
	\gamma_{(g,x^*)}(f):=x^*(I_\nu(fg))=\int_\Omega fg \, d(x^*\nu)
	\quad\text{for all $f\in L_1(\nu)$},
$$
and the set
$$
	\Gamma_\nu:=\{\gamma_{(g,x^*)}: \, (g,x^*)\in B_{L_\infty(\nu)}\times B_{X^*}\}
	\sub B_{L_1(\nu)^*}
$$
is norming for~$L_1(\nu)$, that is,
\begin{equation}\label{eqn:Gamma}
	\|f\|_{L_1(\nu)}=\sup_{\gamma\in \Gamma_\nu} \gamma(f)
	\quad\text{for all $f\in L_1(\nu)$}.
\end{equation}
Let $\sigma(L_1(\nu),\Gamma_\nu)$ be the (locally convex Hausdorff) topology on~$L_1(\nu)$ of pointwise convergence
on~$\Gamma_\nu$, which is weaker than the weak topology. 
Proposition~\ref{pro:convergenceproperty} below was first proved in~\cite[Proposition~17]{oka}. It was pointed out
in \cite[Section~4.7]{man-J} that it can also be seen as a corollary of the Rainwater-Simons theorem (see, e.g., \cite[Theorem~3.134]{fab-ultimo})
and the fact that $\Gamma_\nu$ is a James boundary for~$L_1(\nu)$ (i.e., the supremum in~\eqref{eqn:Gamma} is a maximum)
whenever $\mathcal{R}(\nu)$ is relatively norm compact. We refer the reader to \cite[Section~4]{cal-alt-6}
and \cite[Section~2]{cal-alt-7} for more information on this topic.

\begin{pro}\label{pro:convergenceproperty}
Suppose that $\mathcal{R}(\nu)$ is relatively norm compact. Then every bounded and $\sigma(L_1(\nu),\Gamma_\nu)$-convergent sequence in~$L_1(\nu)$ 
is weakly convergent.
\end{pro}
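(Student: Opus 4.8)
The plan is to deduce the statement from the Rainwater--Simons theorem \cite[Theorem~3.134]{fab-ultimo}, for which the essential point is to check that $\Gamma_\nu$ is a \emph{James boundary} for $L_1(\nu)$, i.e., that the supremum in~\eqref{eqn:Gamma} is attained for every $f\in L_1(\nu)$. By Proposition~\ref{pro:norming}(ii) and the definition of $\gamma_{(g,x^*)}$ one has $\sup_{\gamma\in\Gamma_\nu}\gamma(f)=\sup_{g\in B_{L_\infty(\nu)}}\|I_\nu(fg)\|_X=\|f\|_{L_1(\nu)}$, so it suffices to produce, for each fixed $f$, a pair $(g_0,x_0^*)\in B_{L_\infty(\nu)}\times B_{X^*}$ with $x_0^*(I_\nu(fg_0))=\|f\|_{L_1(\nu)}$. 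Note that for fixed $g$ the inner supremum over $x^*$ is always attained, so the whole issue is attaining the supremum over~$g$.

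First I would fix $f\in L_1(\nu)$ and study the set $C_f:=\{I_\nu(fg):g\in B_{L_\infty(\nu)}\}\sub X$. The key observation is that $\{fg:g\in B_{L_\infty(\nu)}\}$ is exactly the order interval $[-|f|,|f|]$ of $L_1(\nu)$: indeed $|fg|\le|f|$ for $g\in B_{L_\infty(\nu)}$, and conversely, given $h$ with $|h|\le|f|$, the function $g$ equal to $h/f$ on $\{f\ne 0\}$ and to $0$ elsewhere lies in $B_{L_\infty(\nu)}$ and satisfies $fg=h$. Since $L_1(\nu)$ has order continuous norm, this order interval is approximately order bounded: splitting $h\in[-|f|,|f|]$ as $h\chi_{\{|f|\le\rho\}}+h\chi_{\{|f|>\rho\}}$, the first term lies in $j_\nu(\rho B_{L_\infty(\nu)})$ while the second has norm at most $\||f|\chi_{\{|f|>\rho\}}\|_{L_1(\nu)}$, which tends to~$0$ as $\rho\to\infty$. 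Hence $[-|f|,|f|]$ is L-weakly compact by Proposition~\ref{pro:Lweaklycompact}.

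Now the relative norm compactness of $\mathcal{R}(\nu)$ enters, exactly as in the proof of (i)$\impli$(ii) of Proposition~\ref{pro:charDP}: for each $\epsilon>0$ choose $\rho>0$ with $[-|f|,|f|]\sub j_\nu(\rho B_{L_\infty(\nu)})+\epsilon B_{L_1(\nu)}$, so that $C_f\sub K_\epsilon+\epsilon B_X$ with $K_\epsilon:=I_\nu(j_\nu(\rho B_{L_\infty(\nu)}))$ norm compact by Proposition~\ref{pro:compactrange}; thus $C_f$ is totally bounded. Moreover $C_f=I_\nu([-|f|,|f|])$ is the image of a weakly compact set under the (weak-to-weak continuous) operator $I_\nu$, hence weakly compact and in particular norm closed, so $C_f$ is norm compact. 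Consequently the norm attains its maximum on $C_f$ at some $y_0=I_\nu(fg_0)$, and this maximum equals $\|f\|_{L_1(\nu)}$; choosing $x_0^*\in B_{X^*}$ with $x_0^*(y_0)=\|y_0\|_X$ via Hahn--Banach gives $\gamma_{(g_0,x_0^*)}(f)=\|f\|_{L_1(\nu)}$. This proves that $\Gamma_\nu$ is a James boundary.

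Finally I would invoke the Rainwater--Simons theorem. Let $(f_n)_{n\in\N}$ be bounded and $\sigma(L_1(\nu),\Gamma_\nu)$-convergent, with limit $f\in L_1(\nu)$ (the limit is a genuine element of $L_1(\nu)$ since that topology is Hausdorff, $\Gamma_\nu$ being norming). Then $(f_n)_{n\in\N}$ converges to $f$ pointwise on the boundary $\Gamma_\nu$, and the Rainwater--Simons theorem yields that $(f_n)_{n\in\N}$ converges weakly to~$f$. The only delicate point is the James boundary verification, where the relative norm compactness of $\mathcal{R}(\nu)$ is precisely what upgrades the weak compactness of the order interval $[-|f|,|f|]$ to norm compactness of its image $C_f$; granting this, the weak convergence is a black-box consequence of Rainwater--Simons.
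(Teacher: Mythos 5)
Your proof is correct. Note that the paper does not actually prove Proposition~\ref{pro:convergenceproperty}: it cites Okada's original argument \cite[Proposition~17]{oka} and then remarks, following \cite[Section~4.7]{man-J}, that the result is a corollary of the Rainwater--Simons theorem together with the fact that $\Gamma_\nu$ is a James boundary for $L_1(\nu)$ whenever $\mathcal{R}(\nu)$ is relatively norm compact. Your proposal is a complete, self-contained execution of that second route, and its substantive content is exactly the boundary verification that the paper delegates to the references. That verification is sound: the identification $\{fg:\, g\in B_{L_\infty(\nu)}\}=[-|f|,|f|]$ is argued correctly (the division $g=h/f$ on $\{f\neq 0\}$ works $\nu$-a.e.); the order interval $[-|f|,|f|]$ is weakly compact by order continuity of the norm, which is the same argument the paper uses for $[-\chi_\Omega,\chi_\Omega]$; the truncation $h\chi_{\{|f|\le\rho\}}+h\chi_{\{|f|>\rho\}}$ gives approximate order boundedness; and the compact-range hypothesis enters exactly as in the proof of Proposition~\ref{pro:charDP}, (i)$\impli$(ii), to make $C_f=I_\nu([-|f|,|f|])$ totally bounded, while weak compactness of the order interval makes $C_f$ norm closed, hence norm compact. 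Attainment of the maximum of the norm on $C_f$ plus Hahn--Banach then exhibits the maximizing functional $\gamma_{(g_0,x_0^*)}$, and the Rainwater--Simons theorem \cite[Theorem~3.134]{fab-ultimo} applied to the bounded sequence and its $\sigma(L_1(\nu),\Gamma_\nu)$-limit (which lies in $L_1(\nu)$ by hypothesis) yields weak convergence. In short: same approach as the one the paper sketches, with the missing details correctly supplied.
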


\section{Dunford-Pettis integration operators}\label{section:DPintegrationoperator}

Let $\mathcal{A}$ be an operator ideal. Following~\cite{oka-alt3}, a Banach space~$X$ is said to be 
{\em $\mathcal{A}$-variation admissible} if for every measurable space $(\Omega,\Sigma)$ and for every $\nu \in {\rm ca}(\Sigma,X)$
such that $I_\nu \in \mathcal{A}$, we have $|\nu|(\Omega)<\infty$. The interest of this concept is based on
the following result, \cite[Proposition~1.1]{oka-alt3}, which provides a tool for proving that $L_1$ is lattice-isomorphic to an AL-space
under some additional assumptions.

\begin{pro}\label{pro:OIP}
Let $\mathcal{A}$ be an operator ideal and let $X$ be a Banach space. If $X$ is $\mathcal{A}$-variation admissible, then 
for every measurable space $(\Omega,\Sigma)$ and for every $\nu \in {\rm ca}(\Sigma,X)$ such that $I_\nu \in \mathcal{A}$, the inclusion map $\iota_\nu: L_1(|\nu|) \to L_1(\nu)$
is a lattice-isomorphism.
\end{pro}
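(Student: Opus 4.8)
The plan is to unpack the definitions of \emph{$\mathcal{A}$-variation admissible} and of the inclusion map~$\iota_\nu$, and then to invoke the characterization already recalled in the Introduction. First I would observe that the hypotheses give us $I_\nu \in \mathcal{A}$ and $X$ is $\mathcal{A}$-variation admissible; by the very definition of the latter, applied to our fixed $(\Omega,\Sigma)$ and~$\nu$, we immediately obtain $|\nu|(\Omega)<\infty$. So the measure~$\nu$ has finite variation. The whole content of the proposition should then reduce to the standard fact that finite variation forces the inclusion $\iota_\nu: L_1(|\nu|)\to L_1(\nu)$ to be a (surjective) lattice-isomorphism.

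Next I would make that reduction precise. As recalled in the Introduction, $\iota_\nu$ is always a well-defined injective lattice-homomorphism, and the key dichotomy is that \emph{$\iota_\nu$ is surjective if and only if it is a lattice-isomorphism, and in that case $|\nu|(\Omega)<\infty$}. The direction we need runs the other way, so the crux is to argue that $|\nu|(\Omega)<\infty$ implies surjectivity of~$\iota_\nu$. The natural route is a norm comparison: for every $\Sigma$-simple function~$f$ one has the inequality $\|f\|_{L_1(\nu)}\le \int_\Omega |f|\,d|\nu|=\|f\|_{L_1(|\nu|)}$, which follows directly from the definition of $\|\cdot\|_{L_1(\nu)}$ as a supremum over $x^*\in B_{X^*}$ together with $|x^*\nu|\le|\nu|$. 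Since $\Sigma$-simple functions are dense in both $L_1(|\nu|)$ and $L_1(\nu)$, this inequality shows that $\iota_\nu$ has norm at most~$1$ and, more importantly, that its image contains a dense subspace, namely the $\Sigma$-simple functions, which are already dense in $L_1(\nu)$. Combined with injectivity and the lattice-homomorphism property, an open-mapping or closed-range argument then upgrades this to surjectivity, whence $\iota_\nu$ is a lattice-isomorphism.

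I expect the main obstacle to be the surjectivity step rather than the finite-variation step, since the latter is essentially definitional. The subtlety is that dense range plus injectivity does not by itself give an isomorphism onto the whole space; one must rule out that $\iota_\nu$ has dense-but-proper range. The cleanest way around this is to quote the already-cited result \cite[Proposition~2]{cur2}, which states precisely that $L_1(\nu)$ is lattice-isomorphic to an AL-space if and only if $\iota_\nu$ is surjective, in tandem with the remark that $|\nu|(\Omega)<\infty$ makes $L_1(|\nu|)$ an honest AL-space and the inclusion a bounded bijection onto a complete subspace. If one prefers a self-contained argument, the alternative is to verify directly that the two norms $\|\cdot\|_{L_1(\nu)}$ and $\|\cdot\|_{L_1(|\nu|)}$ are equivalent on the common dense subspace of simple functions when $|\nu|(\Omega)<\infty$, which forces the two completions to coincide and hence $\iota_\nu$ to be onto.

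A remark on efficiency: because the equivalence ``$\iota_\nu$ surjective $\iff$ $|\nu|(\Omega)<\infty$ and $\iota_\nu$ is a lattice-isomorphism'' is already stated in the Introduction with attribution to~\cite{cur2}, the honest proof is essentially a one-line deduction---extract $|\nu|(\Omega)<\infty$ from admissibility, then feed it into that cited equivalence. I would therefore structure the final write-up so that the conceptual work lives in the definition-unpacking, and the analytic content is delegated to the previously recorded results, keeping the argument short and transparent.
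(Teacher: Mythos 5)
Your argument has a fatal gap, and it sits exactly where you located the ``main obstacle'': the implication ``$|\nu|(\Omega)<\infty$ $\Rightarrow$ $\iota_\nu$ is surjective'' is false, so no variant of your reduction can work. Concretely, let $\nu:\mathcal{P}(\mathbb{N})\to\ell_2$ be defined by $\nu(A):=\sum_{n\in A}2^{-n}e_n$. Then $|\nu|(\mathbb{N})=\sum_{n=1}^\infty 2^{-n}=1<\infty$, but $\|f\|_{L_1(\nu)}=\|(f(n)2^{-n})_{n}\|_{\ell_2}$ while $\|f\|_{L_1(|\nu|)}=\|(f(n)2^{-n})_{n}\|_{\ell_1}$; the function $f(n):=2^n/n$ lies in $L_1(\nu)\setminus L_1(|\nu|)$, so $\iota_\nu$ is injective with dense but proper range. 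This also destroys both of your proposed repairs: the two norms are \emph{not} equivalent on $\Sigma$-simple functions even when the variation is finite (for $f=\sum_{n\leq N}2^n\chi_{\{n\}}$ they equal $\sqrt{N}$ and $N$), and invoking \cite[Proposition~2]{cur2} is circular, because to apply it you would first need to know that $L_1(\nu)$ is lattice-isomorphic to an AL-space, which is precisely what is being proved. Note also that the Introduction's dichotomy runs only in the direction ``surjective $\Rightarrow$ finite variation''; you are using its (false) converse. A telltale sign of the trouble is that your argument never uses the hypothesis that $\mathcal{A}$ is an operator \emph{ideal}.

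For the record, the paper does not prove this proposition; it quotes it from \cite[Proposition~1.1]{oka-alt3}. The actual proof applies admissibility not once but to a whole family of measures built from $\nu$, and this is where the ideal property enters. Fix $f\in L_1(\nu)$ and consider the indefinite integral $\nu_f(A):=\int_A f\,d\nu$. The multiplication map $M_f:L_1(\nu_f)\to L_1(\nu)$, $g\mapsto fg$, is a well-defined (isometric) operator with $I_{\nu_f}=I_\nu\circ M_f$; since $I_\nu\in\mathcal{A}$ and $\mathcal{A}$ is an operator ideal, $I_{\nu_f}\in\mathcal{A}$. Admissibility applied to the $X$-valued measure $\nu_f$ gives $|\nu_f|(\Omega)<\infty$, and the standard identity $|\nu_f|(\Omega)=\int_\Omega |f|\,d|\nu|$ (in the spirit of \cite[Lemma~3.14]{oka-alt}) then yields $f\in L_1(|\nu|)$. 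Hence $\iota_\nu$ is surjective, and surjectivity upgrades to a lattice-isomorphism by the open mapping theorem (equivalently, by the dichotomy you quoted). Your first step, extracting $|\nu|(\Omega)<\infty$, is correct but is only the instance $f=\chi_\Omega$ of this scheme, and by itself it is not enough.
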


The next proposition is elementary.

\begin{pro}\label{pro:stability1}
Let $\mathcal{A}$ be an operator ideal and let $X$ and $Y$ be Banach spaces.
\begin{enumerate}
\item[(i)] If $X$ and $Y$ are isomorphic, then $X$ is $\mathcal{A}$-variation admissible if and only if $Y$ is $\mathcal{A}$-variation admissible.
\item[(ii)] If $X$ is $\mathcal{A}$-variation admissible, then every subspace of~$X$ is $\mathcal{A}$-variation admissible.
\end{enumerate}
\end{pro}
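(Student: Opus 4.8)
The plan is to handle both parts by composing $\nu$ with the relevant operator and invoking Proposition~\ref{pro:composition}. For~(i), let $T:X\to Y$ be an isomorphism; by symmetry it suffices to show that $Y$ being $\mathcal{A}$-variation admissible implies that $X$ is. So I would fix $(\Omega,\Sigma)$ and $\nu\in{\rm ca}(\Sigma,X)$ with $I_\nu\in\mathcal{A}$, and set $\tilde\nu:=T\circ\nu\in{\rm ca}(\Sigma,Y)$. Applying Proposition~\ref{pro:composition}(iii) to $T$ and to $T^{-1}$ (note that $\nu=T^{-1}\circ\tilde\nu$) shows that the inclusion maps $L_1(\nu)\to L_1(\tilde\nu)$ and $L_1(\tilde\nu)\to L_1(\nu)$ are mutually inverse operators, so the inclusion $u:L_1(\nu)\to L_1(\tilde\nu)$ is an isomorphism. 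Then from $I_{\tilde\nu}\circ u=T\circ I_\nu$ I obtain $I_{\tilde\nu}=(T\circ I_\nu)\circ u^{-1}$, which lies in $\mathcal{A}$ because $I_\nu\in\mathcal{A}$ and $\mathcal{A}$ is an operator ideal. Since $Y$ is $\mathcal{A}$-variation admissible this gives $|\tilde\nu|(\Omega)<\infty$, and a two-sided estimate $c\|x\|\leq\|Tx\|\leq C\|x\|$ (for some $c,C>0$) yields $c|\nu|\leq|\tilde\nu|\leq C|\nu|$, whence $|\nu|(\Omega)<\infty$.

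For~(ii), let $Y\subseteq X$ be a subspace with inclusion $j:Y\to X$, take $\nu\in{\rm ca}(\Sigma,Y)$ with $I_\nu\in\mathcal{A}$, and put $\tilde\nu:=j\circ\nu\in{\rm ca}(\Sigma,X)$. The same scheme works once I verify that the inclusion $u:L_1(\nu)\to L_1(\tilde\nu)$ is onto. Injectivity and the isometric identity $\|f\|_{L_1(\tilde\nu)}=\|f\|_{L_1(\nu)}$ follow from $x^*\tilde\nu=(x^*|_Y)\nu$ together with $j^*(B_{X^*})=B_{Y^*}$ (Hahn--Banach). For surjectivity, given $f\in L_1(\tilde\nu)$ I would observe that each integral $\int_A f\,d\tilde\nu=I_{\tilde\nu}(f\chi_A)$ lies in $\overline{I_{\tilde\nu}(L_1(\tilde\nu))}=\overline{{\rm span}}(\mathcal{R}(\tilde\nu))$ by~\eqref{eqn:range}, and since $\mathcal{R}(\tilde\nu)=\mathcal{R}(\nu)\subseteq Y$ with $Y$ closed in $X$, these integrals stay in $Y$; hence $f$ is $\nu$-integrable, so $u$ is a surjective isometry. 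Consequently $I_{\tilde\nu}=(j\circ I_\nu)\circ u^{-1}\in\mathcal{A}$, so $|\tilde\nu|(\Omega)<\infty$ as $X$ is $\mathcal{A}$-variation admissible, and $|\tilde\nu|=|\nu|$ because $j$ is isometric, giving $|\nu|(\Omega)<\infty$.

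The only genuinely non-formal step is this surjectivity of $u$ in~(ii): unlike in~(i), there is no inverse operator available, so one must check by hand that the $X$-valued integrals of $\tilde\nu$-integrable functions return to the subspace $Y$, and this is exactly where~\eqref{eqn:range} and the closedness of $Y$ in $X$ do the work. Everything else reduces to routine use of the ideal property of $\mathcal{A}$ and of the behaviour of the variation under an isomorphism and an isometric embedding.
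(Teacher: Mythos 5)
Your proof is correct and follows essentially the same route as the paper's: compose $\nu$ with the isomorphism (resp.\ the inclusion of the subspace), apply Proposition~\ref{pro:composition} to identify the $L_1$ spaces and transfer membership of the integration operator in~$\mathcal{A}$ via the ideal property, and compare variations. The only difference is that you explicitly verify the surjectivity of $u$ in part~(ii) using \eqref{eqn:range} and the closedness of the subspace, a point the paper's proof states without elaboration.
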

\begin{proof} Let $(\Omega,\Sigma)$ be a measurable space.

(i) If $T:X\to Y$ is an isomorphism and $\nu\in {\rm ca}(\Sigma,X)$, then we can apply Proposition~\ref{pro:composition}
to deduce that a function is $\nu$-integrable if and only if it is $\tilde{\nu}$-integrable, where we write $\tilde{\nu}:=T\circ\nu \in{\rm ca}(\Sigma,Y)$,
and that the identity map $u: L_1(\nu)\to L_1(\tilde{\nu})$ is an isomorphism satisfying
$I_{\tilde{\nu}}\circ u=T\circ I_\nu$. Hence, $I_\nu\in \mathcal{A}$ if and only if $I_{\tilde{\nu}}\in \mathcal{A}$.
Moreover, we have $\|T^{-1}\|^{-1}\cdot |\nu|(\Omega) \leq |\tilde{\nu}|(\Omega)\leq \|T\|\cdot |\nu|(\Omega)$.

(ii) Let $Z\sub X$ be a subspace and let $i:Z\to X$ be the inclusion operator. Fix $\nu\in {\rm ca}(\Sigma,Z)$ such that $I_\nu\in \mathcal{A}$ and define
$\tilde{\nu}:=i\circ\nu \in{\rm ca}(\Sigma,X)$. Then a function is $\nu$-integrable if and only if it is $\tilde{\nu}$-integrable,
the identity map $u: L_1(\nu)\to L_1(\tilde{\nu})$ is an isometry and $I_{\tilde{\nu}}=i\circ I_\nu \circ u^{-1} \in \mathcal{A}$ 
(Proposition~\ref{pro:composition}). Since $X$ is $\mathcal{A}$-variation admissible, we have $|\nu|(\Omega)=|\tilde{\nu}|(\Omega)<\infty$.
\end{proof}

In this section we focus on the operator ideal $\mathcal{A}_{cc}$ of all Dunford-Pettis operators. Our main goal is to provide a somehow simpler proof
of the fact that Asplund spaces are $\mathcal{A}_{cc}$-variation admissible, \cite[Theorem~1.3]{cal-alt-5}, see Corollary~\ref{cor:Asplund} below. 

Part~(i) of the following result was already pointed out in \cite{oka-alt3}:
 
\begin{pro}\label{pro:stability2}
Let $X$ be a Banach space.
\begin{enumerate}
\item[(i)] If $X$ is $\mathcal{A}_{cc}$-variation admissible, then $X$ contains no subspace isomorphic to~$\ell_1$.
\item[(ii)] If every separable subspace of~$X$ is $\mathcal{A}_{cc}$-variation admissible, then $X$ is $\mathcal{A}_{cc}$-variation admissible.
\end{enumerate}
\end{pro}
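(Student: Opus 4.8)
The plan is to treat the two parts separately, reducing each to a situation where the hypotheses can be applied directly.

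For (i) I would argue by contraposition, showing that if $X$ contains a subspace $Y$ isomorphic to~$\ell_1$, then $X$ fails to be $\mathcal{A}_{cc}$-variation admissible. By Proposition~\ref{pro:stability1}(ii) every subspace of an $\mathcal{A}_{cc}$-variation admissible space is again $\mathcal{A}_{cc}$-variation admissible, and by Proposition~\ref{pro:stability1}(i) this property is invariant under isomorphisms; hence it suffices to prove that $\ell_1$ itself is not $\mathcal{A}_{cc}$-variation admissible. The idea is to exploit the Schur property of~$\ell_1$: since every weakly null sequence in~$\ell_1$ is norm null and operators are weak-to-weak continuous, every operator with range in~$\ell_1$ is Dunford-Pettis. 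In particular $I_\nu\in\mathcal{A}_{cc}$ for \emph{every} $\nu\in{\rm ca}(\Sigma,\ell_1)$, so admissibility of~$\ell_1$ would force every $\ell_1$-valued countably additive measure to have finite variation. It then remains to produce one $\ell_1$-valued countably additive measure of infinite variation. I would invoke the classical Dvoretzky--Rogers theorem to fix an unconditionally convergent series $\sum_n x_n$ in~$\ell_1$ with $\sum_n\|x_n\|=\infty$, and define $\nu:2^\N\to\ell_1$ by $\nu(A):=\sum_{n\in A}x_n$. Unconditional convergence guarantees that $\nu$ is countably additive, while testing the variation against the partition of~$\N$ into singletons gives $|\nu|(\N)\geq\sum_n\|x_n\|=\infty$. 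This exhibits the desired measure and completes~(i).

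For (ii) the plan is to fix $\nu\in{\rm ca}(\Sigma,X)$ with $I_\nu$ Dunford-Pettis and to descend to a separable subspace carrying all the data. The pivotal observation is that Proposition~\ref{pro:charDP} forces $\mathcal{R}(\nu)$ to be relatively norm compact, hence norm separable, so that $Z:=\overline{{\rm span}}(\mathcal{R}(\nu))$ is a \emph{separable} subspace of~$X$. Regard $\nu$ as a measure $\nu_Z\in{\rm ca}(\Sigma,Z)$, its range already lying in~$Z$, and let $i:Z\to X$ be the isometric inclusion, so that $\nu=i\circ\nu_Z$. By Proposition~\ref{pro:composition} the inclusion $u:L_1(\nu_Z)\to L_1(\nu)$ satisfies $I_\nu\circ u=i\circ I_{\nu_Z}$; since $i$ is an isometric embedding, $u$ is an isometric isomorphism and $I_{\nu_Z}=i^{-1}\circ I_\nu\circ u$ is Dunford-Pettis, being the composition of~$I_\nu$ with operators. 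Applying the hypothesis to the separable subspace~$Z$ yields $|\nu_Z|(\Omega)<\infty$, and since $i$ preserves norms we conclude $|\nu|(\Omega)=|\nu_Z|(\Omega)<\infty$, that is, $X$ is $\mathcal{A}_{cc}$-variation admissible.

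The main obstacle lies in the first step of~(ii): a priori the range of a countably additive vector measure need not be separable, so passing to a separable subspace is not automatic. The crucial point is that the Dunford-Pettis hypothesis, through Proposition~\ref{pro:charDP}, upgrades relative weak compactness of~$\mathcal{R}(\nu)$ to relative norm compactness and hence to separability of $\overline{{\rm span}}(\mathcal{R}(\nu))$. Once this is in place, the codomain reduction and the transfer of the Dunford-Pettis property along the isometric inclusion are routine applications of Proposition~\ref{pro:composition} together with the ideal property of~$\mathcal{A}_{cc}$.
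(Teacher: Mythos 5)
Your proof is correct and takes essentially the same route as the paper's: part (i) reduces via Proposition~\ref{pro:stability1} to showing that $\ell_1$ is not $\mathcal{A}_{cc}$-variation admissible, using the Dvoretzky--Rogers theorem together with the Schur property of~$\ell_1$, and part (ii) passes to the separable subspace $Z=\overline{{\rm span}}(\mathcal{R}(\nu))$ after upgrading $\mathcal{R}(\nu)$ to relative norm compactness through Proposition~\ref{pro:charDP}. The only difference is cosmetic: you make explicit the corestriction argument (that $I_{\nu_Z}$ is Dunford-Pettis and that the variation is unchanged under the isometric inclusion), which the paper's proof leaves implicit in the phrase ``since $\nu$ takes values in~$Z$ and $I_\nu$ is Dunford-Pettis''.
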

\begin{proof} (i) By Proposition~\ref{pro:stability1}, its suffices to check that $\ell_1$ is not 
$\mathcal{A}_{cc}$-variation admissible. Since $\ell_1$ is infinite-dimensional, the Dvoretzky-Rogers theorem (see, e.g., \cite[Theorem~1.2]{die-alt})
ensures the existence of an unconditionally convergent series $\sum_{n=1}^\infty x_n$ in~$\ell_1$ which is not absolutely convergent. Now, define $\nu:\mathcal{P}(\N)\to \ell_1$
by $\nu(A):=\sum_{n\in A}x_n$ for all $A \in \mathcal{P}(\N)$ (the power set of~$\N$). Then $\nu\in {\rm ca}(\mathcal{P}(\N),\ell_1)$ 
satisfies $|\nu|(\mathcal{P}(\N))=\sum_{n=1}^\infty \|x_n\|=\infty$, 
while $I_\nu$ is Dunford-Pettis by the Schur property of~$\ell_1$ (see, e.g., \cite[Theorem~2.3.6]{alb-kal}). Hence, $\ell_1$ is not 
$\mathcal{A}_{cc}$-variation admissible.
 
(ii) Let $(\Omega,\Sigma)$ be a measurable space and let $\nu \in {\rm ca}(\Sigma,X)$
such that $I_\nu$ is Dunford-Pettis. Then $\mathcal{R}(\nu)$ is relatively norm compact (Proposition~\ref{pro:charDP}), so
it is separable. Hence, the subspace $Z:=\overline{{\rm span}}(\mathcal{R}(\nu))=\overline{I_\nu(L_1(\nu))}$ 
(see \eqref{eqn:range} at page~\pageref{eqn:range}) is separable. Then $Z$ is
$\mathcal{A}_{cc}$-variation admissible by assumption. Since $\nu$ takes values in~$Z$ 
and $I_\nu$ is Dunford-Pettis, we deduce that $|\nu|(\Omega)<\infty$.
\end{proof}

\subsection{Schauder decompositions and the variation of a vector measure}\label{subsection:Schauder}

Let $X$ be a Banach space. A {\em Schauder decomposition} of~$X$ is a sequence $(X_n)_{n\in \N}$
of (non-zero) subspaces of~$X$ such that each $x\in X$ can be written in a unique way as a convergent series of the form $x=\sum_{n=1}^\infty x_n$, where
$x_n\in X_n$ for all $n\in \N$. In this case, for each $n\in \N$ there is a projection $S_n$ from~$X$ onto~$X_n$
such that $x=\sum_{n=1}^\infty S_n(x)$ for all $x\in X$. For each $k\in \N$, the operator
$P_k:=\sum_{n=1}^k S_n$ is a projection from~$X$ onto the subspace~$\bigoplus_{n=1}^k X_n$,
we have $\sup_{k\in \N}\|P_k\|<\infty$ and the formula $|||x|||=\sup_{k\in \N}\|P_k(x)\|$
defines an equivalent norm on~$X$. Note that $P_{k'}\circ P_{k}=P_{k}\circ P_{k'}=P_{\min\{k,k'\}}$ for all $k,k'\in \N$
and $\|P_k(x)-x\|\to 0$ as $k\to \infty$ for every $x\in X$.
Of course, if $X$ has a Schauder basis $(e_n)_{n\in \N}$, then 
the sequence of $1$-dimensional subspaces generated by each $e_n$ is a Schauder decomposition of~$X$.

The following lemma will be a key tool for proving Theorem~\ref{theo:general} below.

\begin{lem}\label{lem:projection2}
Let $X$ be a Banach space, let $(\Omega,\Sigma)$ be a measurable space and let $\nu \in {\rm ca}(\Sigma,X)$. Suppose that:
\begin{itemize}
\item $|\nu|(\Omega)=\infty$ and $\mathcal{R}(\nu)$ is relatively norm compact.
\item $X$ has a Schauder decomposition $(X_n)_{n\in \N}$ such that $|P_k\circ \nu|(\Omega)<\infty$ for all $k\in \N$, where $P_k$ is the associated projection
from~$X$ onto $\bigoplus_{n=1}^k X_n$. 
\end{itemize}
Then there exist a sequence $(B_j)_{j\in \N}$ of pairwise disjoint elements of $\Sigma\setminus \mathcal{N}(\nu)$,
a strictly increasing sequence $(k_j)_{j\in \N}$ in~$\N$ and $\epsilon>0$ in such a way that the functions $f_j:=\frac{1}{\|\nu\|(B_j)}\chi_{B_j} \in L_1(\nu)$ and the projections
$R_j:=P_{k_{j+1}}-P_{k_{j}}$ satisfy:
\begin{enumerate}
\item[(i)] $\|I_\nu(f_jg)-R_{j}(I_\nu(f_jg))\| \leq 2^{-j}$ for all $g\in B_{L_\infty(\nu)}$ and $j\in \N$.
\item[(ii)] There is $j_0\in \N$ such that $\|I_\nu(f_j)-I_\nu(f_{j'})\|\geq \epsilon$ for all distinct $j,j' \geq j_0$.
\end{enumerate}
\end{lem}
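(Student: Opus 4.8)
The plan is to build the data $(B_j)$, $(k_j)$ by induction so that each $I_\nu(f_j)$, and more generally each $I_\nu(f_jg)$, is concentrated up to an error $2^{-j}$ in the block $R_j(X)=(P_{k_{j+1}}-P_{k_j})(X)$, while $\|I_\nu(f_j)\|$ stays bounded below; the disjointness of the blocks then yields both~(i) and~(ii). Three facts drive the construction. \textbf{(a)} Since $\{I_\nu(g):g\in B_{L_\infty(\nu)}\}$ is relatively norm compact (Proposition~\ref{pro:compactrange}) and the $P_k$ converge to the identity uniformly on norm compact sets, one gets the \emph{uniform} tail estimate $\|(I-P_k)\nu\|(\Omega)=\sup_{g\in B_{L_\infty(\nu)}}\|(I-P_k)I_\nu(g)\|\to 0$ as $k\to\infty$. \textbf{(b)} \emph{Good sets exist}: for every $k\in\N$, every $\eta>0$ and every $A\in\Sigma$ with $|\nu|(A)=\infty$ there is $B\sub A$, $B\notin\mathcal N(\nu)$, with $|P_k\nu|(B)\le\eta\|\nu\|(B)$; indeed, choosing a finite partition $(C_i)$ of $A$ with $\sum_i\|\nu(C_i)\|>\tfrac2\eta|P_k\nu|(\Omega)\ge\tfrac2\eta\sum_i|P_k\nu|(C_i)$, some piece $C_{i^*}$ satisfies $|P_k\nu|(C_{i^*})<\tfrac\eta2\|\nu(C_{i^*})\|\le\eta\|\nu\|(C_{i^*})$. \textbf{(c)} \emph{Lower bound}: from $\|\nu\|(B)\le 2\sup_{C\sub B}\|\nu(C)\|$ (valid for real measures) one passes to $B'\sub B$ with $\|\nu(B')\|\ge\tfrac13\|\nu\|(B)\ge\tfrac13\|\nu\|(B')$, so $\|I_\nu(\tfrac1{\|\nu\|(B')}\chi_{B'})\|\ge\tfrac13$, while $B'$ inherits the estimate of~(b) up to a factor~$3$.

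With these in hand the induction runs as follows. Having chosen $k_1<\dots<k_j$ and pairwise disjoint $B_1,\dots,B_{j-1}$, and keeping the invariant that the unused part still has infinite variation, I fix a small $\eta_j>0$ and apply~(b) and then~(c) inside that part to obtain a new, disjoint $B_j$ with $|P_{k_j}\nu|(B_j)\le 3\eta_j\|\nu\|(B_j)$ and $\|\nu(B_j)\|\ge\tfrac13\|\nu\|(B_j)$. Using~(a) I then choose $k_{j+1}>k_j$ so large that $\|(I-P_{k_{j+1}})\nu\|(\Omega)\le 2^{-j-1}\|\nu\|(B_j)$. Writing $f_j=\tfrac1{\|\nu\|(B_j)}\chi_{B_j}$ and invoking Propositions~\ref{pro:composition} and~\ref{pro:norming}, for every $g\in B_{L_\infty(\nu)}$ the head term $\|P_{k_j}I_\nu(f_jg)\|\le|P_{k_j}\nu|(B_j)/\|\nu\|(B_j)\le 3\eta_j$ and the tail term $\|(I-P_{k_{j+1}})I_\nu(f_jg)\|\le\|(I-P_{k_{j+1}})\nu\|(\Omega)/\|\nu\|(B_j)\le 2^{-j-1}$; choosing $\eta_j$ with $3\eta_j\le 2^{-j-1}$ and decomposing $I-R_j=(I-P_{k_{j+1}})+P_{k_j}$ gives~(i).

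For~(ii), abbreviate $a_j:=I_\nu(f_j)$ and $b_j:=R_ja_j$; taking $g=\chi_\Omega$ in~(i) gives $\|a_j-b_j\|\le 2^{-j}$, so $\|b_j\|\ge\|a_j\|-2^{-j}\ge\tfrac13-2^{-j}$, which exceeds $\tfrac14$ once $j$ is large. Since $R_jR_{j'}=0$ for $j\ne j'$, for $j<j'$ one checks $P_{k_{j+1}}b_j=b_j$ and $P_{k_{j+1}}b_{j'}=0$, whence in the equivalent norm $|||x|||=\sup_k\|P_k(x)\|\le M\|x\|$ (with $M=\sup_k\|P_k\|$) one has $|||b_j-b_{j'}|||\ge\|P_{k_{j+1}}(b_j-b_{j'})\|=\|b_j\|$, and therefore $\|a_j-a_{j'}\|\ge\tfrac1M\|b_j\|-\|a_j-b_j\|-\|a_{j'}-b_{j'}\|\ge\tfrac1{4M}-2^{-j_0+1}$ for distinct $j,j'\ge j_0$. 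Hence~(ii) holds with $\epsilon:=\tfrac1{8M}$ once $j_0$ is large enough.

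The delicate point, which I expect to be the main obstacle, is maintaining the invariant that the unused part $\Omega\setminus(B_1\cup\dots\cup B_{j-1})$ still has infinite variation, since the sets produced by~(b)--(c) may themselves have infinite variation and removing them could destroy it. I would resolve this by an exhaustion. Let $\Omega_0$ be a $\mu$-maximal set on which $\nu$ has $\sigma$-finite variation; then every non-null subset of $\Omega_1:=\Omega\setminus\Omega_0$ has infinite variation, and $\Omega_1$ is $\mu$-non-atomic (atoms carry finite variation, hence lie in $\Omega_0$). Since $|\nu|(\Omega)=|\nu|(\Omega_0)+|\nu|(\Omega_1)=\infty$, at least one of $\Omega_0,\Omega_1$ carries infinite variation, and I run the whole construction inside it. On $\Omega_1$ every non-null set has infinite variation, so it suffices to draw each $B_j$ from a small non-null subset of the unused part (possible by non-atomicity), keeping the complement non-null and hence of infinite variation. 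On $\Omega_0$, written as an increasing union $\bigcup_n G_n$ of finite-variation sets with $|\nu|(G_n)\uparrow\infty$, I confine the partition in~(b) to some $G_N$ with $|\nu|(G_N)$ large; the resulting $B_j\sub G_N$ then has \emph{finite} variation, so $|\nu|(\Omega_0\setminus(B_1\cup\dots\cup B_j))=\infty$ is preserved. Either way the induction continues, completing the proof.
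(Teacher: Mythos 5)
Your proof is correct, and it reaches the lemma by a route that differs from the paper's in its key combinatorial step, so a comparison is worthwhile. The analytic core is the same in both arguments: relative norm compactness of $\{I_\nu(g):\, g\in B_{L_\infty(\nu)}\}$ (Proposition~\ref{pro:compactrange}) controls the tails $\mathrm{id}_X-P_{k_{j+1}}$, the hypothesis $|P_k\circ\nu|(\Omega)<\infty$ controls the heads $P_{k_j}$, and a semi-normalization step ($\|\nu(B_j)\|\geq\frac{1}{3}\|\nu\|(B_j)$ for you, $\|\nu(A_l)\|\geq\rho^{-1}\|\nu\|(A_l)$ with $\rho>2$ in the paper) gives the separation in~(ii). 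The genuine difference is the source of the disjoint sets. The paper invokes the theorem of Nygaard and P\~{o}ldvere \cite{nyg-pol2}: since $|\nu|(\Omega)=\infty$, there is a disjoint sequence $(C_l)$ of non-null sets with $\sum_{l}\|\nu(C_l)\|=\infty$; all disjointness is thus settled in advance, and after fixing semi-normalized $A_l\subseteq C_l$ the induction merely selects a subsequence $(l_j)$, choosing $l_{j+1}$ with $|P_{k_j}\circ\nu|(A_{l_{j+1}})\leq 2^{-j-1}\|\nu\|(A_{l_{j+1}})$, which is possible because $\sum_l\|\nu\|(A_l)=\infty$ while $\sum_l |P_{k_j}\circ \nu|(A_l)\leq |P_{k_j}\circ\nu|(\Omega)<\infty$. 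You instead manufacture each $B_j$ on the spot by the pigeonhole on finite partitions in your fact (b) --- the same comparison of $\sum_i\|\nu(C_i)\|$ against $|P_k\nu|$, just localized --- and this creates exactly the difficulty you identify: the construction consumes variation, so you need the maximality/exhaustion decomposition $\Omega=\Omega_0\cup\Omega_1$ and the two-case analysis to keep it alive. In effect, your exhaustion argument is a self-contained substitute for the citation: what the paper outsources to \cite{nyg-pol2}, you prove in the special form you need. Your route buys elementarity, plus two minor clean-ups (the uniform tail estimate $\|(\mathrm{id}_X-P_k)\circ\nu\|(\Omega)\to 0$ instead of per-set compactness estimates, and explicit tracking of $M=\sup_k\|P_k\|$ instead of the paper's renorming to $\|P_k\|=1$); the paper's route buys brevity, since fixing all the disjoint sets beforehand makes the invariant you labor to maintain automatic.
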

\begin{proof} Write $Q_k:={\rm id}_X-P_k$ for all $k\in \N$ (where ${\rm id}_X$ stands for the identity operator on~$X$). 
Since $\sup_{k\in \N}\|Q_k\|<\infty$ and $\|Q_k(x)\|\to 0$ as $k\to \infty$ for every $x\in X$, the sequence
$(Q_k)_{k\in \N}$ converges to~$0$ uniformly on each norm compact subset of~$X$.
By renorming, we can assume without loss of generality that $\|P_k\|=1$ for all $k\in \N$.

Since $|\nu|(\Omega)=\infty$, there is a sequence $(C_l)_{l\in \N}$ of pairwise disjoint elements of $\Sigma\setminus \mathcal{N}(\nu)$ 
such that $\sum_{l=1}^\infty\|\nu(C_l)\|=\infty$, \cite[Corollary~2]{nyg-pol2}. Fix $\rho>2$ and, for each $l\in \N$, take 
$A_l \in \Sigma \setminus \mathcal{N}(\nu)$ such that $A_l \sub C_l$ and $\rho \|\nu(A_l)\| \geq \|\nu\|(C_l)$.
Then $\sum_{l=1}^\infty \|\nu(A_l)\|=\infty$ and 
\begin{equation}\label{eqn:seminormalized}
	\|\nu(A_l)\| \geq \rho^{-1}\|\nu\|(A_l) \quad\text{for all $l\in \N$}.
\end{equation}

{\em Claim.} There exist two strictly increasing sequences $(k_j)_{j\in \N}$ 
and $(l_j)_{j\in \N}$ in~$\N$ such that for every $j\in \N$ we have:
\begin{enumerate}
\item[($\alpha$)] $\|P_{k_{j}}(I_\nu(\chi_{A_{l_{j+1}}}g))\|\leq 2^{-j-1}\|\nu\|(A_{l_{j+1}})$ for all $g\in B_{L_\infty(\nu)}$;
\item[($\beta$)] $\|Q_{k_{j}}(I_\nu(\chi_{A_{l_j}}g))\|\leq 2^{-j} \|\nu\|(A_{l_j})$ for all $g\in B_{L_\infty(\nu)}$.
\end{enumerate}
Indeed, we proceed by induction. Set $l_1:=1$ and consider 
$$
	K_1:=\big\{I_\nu(\chi_{A_{1}}g): \, g \in B_{L_\infty(\nu)}\big\} \sub X.
$$ 
Since $\mathcal{R}(\nu)$ is relatively norm compact, so is $K_1$ (Proposition~\ref{pro:compactrange}) and therefore we can pick $k_1\in \N$ such that 
$$
	\sup_{x \in K_{1}}\|Q_{k_1}(x)\|\leq \frac{\|\nu\|(A_{1})}{2}.
$$
Hence, ($\beta$) holds for $j=1$.
Suppose now that $k_N,l_N\in \N$ are already chosen for some $N\in \N$. Since $\tilde{\nu}:=P_{k_N} \circ \nu$ satisfies $|\tilde{\nu}|(\Omega)<\infty$
and $\sum_{l=1}^\infty \|\nu\|(A_l)=\infty$, 
there is $l_{N+1}\in \N$ with $l_{N+1}>l_N$ such that
\begin{equation}\label{eqn:variationcontrol}
	  |\tilde{\nu}|(A_{l_{N+1}}) \leq 2^{-N-1}\|\nu\|(A_{l_{N+1}}).
\end{equation}
Observe that for each $g\in B_{L_\infty(\nu)}$ we have
\begin{eqnarray*}
	\|P_{k_N}(I_\nu(\chi_{A_{l_{N+1}}}g))\|& \stackrel{(*)}{=} & \|I_{\tilde{\nu}}(\chi_{A_{l_{N+1}}}g)\| \leq  \|\chi_{A_{l_{N+1}}}g\|_{L_1(\tilde{\nu})} 
	\stackrel{(**)}{\leq} \|\chi_{A_{l_{N+1}}}\|_{L_1(\tilde{\nu})} \\
	& = &  \|\tilde{\nu}\|(A_{l_{N+1}}) \leq |\tilde{\nu}|(A_{l_{N+1}}) \stackrel{\eqref{eqn:variationcontrol}}{\leq} 2^{-N-1}\|\nu\|(A_{l_{N+1}}),
\end{eqnarray*}
where $(*)$ and $(**)$ follow from Propositions~\ref{pro:composition} and~\ref{pro:norming}(i), respectively. 
Hence, ($\alpha$) holds for $j=N$. 
Now, we consider the relatively norm compact subset of~$X$ defined by
$$
	K_{N+1}:=\big\{I_\nu(\chi_{A_{l_{N+1}}}g): \, g \in B_{L_\infty(\nu)}\big\}
$$ 
(apply Proposition~\ref{pro:compactrange} again) and we choose $k_{N+1}\in \N$ with $k_{N+1}>k_N$ such that 
$$
	\sup_{x \in K_{N+1}}\|Q_{k_{N+1}}(x)\| \leq 2^{-N-1}\|\nu\|(A_{l_{N+1}}) .
$$
Therefore, ($\beta$) holds for $j=N+1$.
This finishes the proof of the {\em Claim}.

For each $j\in \N$, define $B_j:=A_{l_{j+1}}$ and let $f_j$ and $R_j$ be as in the statement. 
To check property~(i), take $j\in\N$ and $g\in B_{L_\infty(\nu)}$. Then ($\alpha$) and ($\beta$) imply
\begin{eqnarray*}
	\|I_\nu(f_jg)-R_{j}(I_\nu(f_jg))\|&=&\|Q_{k_{j+1}}(I_\nu(f_jg)) + P_{k_{j}}(I_\nu(f_j g))\| \\
	&\leq & \|Q_{k_{j+1}}(I_\nu(f_jg))\| + \|P_{k_{j}}(I_\nu(f_j g))\| \\ &\leq& \frac{1}{2^{j+1}}+\frac{1}{2^{j+1}}=\frac{1}{2^j}.
\end{eqnarray*}
Finally, we will check that (ii) holds for an arbitrary $0<\epsilon<\rho^{-1}$. 
Choose $j_0\in \N$ large enough such that $\rho^{-1}-2^{-j_0}\geq\epsilon$.  
Take $j'>j\geq j_0$ in~$\N$. Then
\begin{eqnarray*}
	\|I_\nu(f_j)-I_\nu(f_{j'})\| & \geq & \| P_{k_{j+1}}(I_\nu(f_j)-I_\nu(f_{j'}))\| \\ & &  \text{(because $\|P_{k_{j+1}}\|=1$)} \\
	& = & \|I_\nu(f_j)-Q_{k_{j+1}}(I_\nu(f_j))-P_{k_{j+1}}(I_\nu(f_{j'}))\| \\
	& \geq &  \|I_\nu(f_j)\|-\|Q_{k_{j+1}}(I_\nu(f_j))\|-\|P_{k_{j+1}}(I_\nu(f_{j'}))\| \\ 
	& = & \|I_\nu(f_j)\|-\|Q_{k_{j+1}}(I_\nu(f_j))\|-\|P_{k_{j+1}}(P_{k_{j'}}(I_\nu(f_{j'})))\| \\
	& & \text{(because $P_{k_{j+1}}\circ P_{k_{j'}}=P_{k_{j+1}}$)} \\
	& \geq & \rho^{-1}-\|Q_{k_{j+1}}(I_\nu(f_j))\|-\|P_{k_{j'}}(I_\nu(f_{j'}))\| \\
	& & \text{(by \eqref{eqn:seminormalized} and $\|P_{k_{j+1}}\|=1$)} \\
	& \geq & \rho^{-1}-\frac{1}{2^{j+1}} -\frac{1}{2^{j'+1}}  \\
	& & \text{(by ($\alpha$) and ($\beta$) with $g=\chi_\Omega$)} \\
	&>& 
	\rho^{-1}-\frac{1}{2^{j_0}} \geq \epsilon.
\end{eqnarray*}
The proof is finished.
\end{proof}

\subsection{Asplund spaces are $\mathcal{A}_{cc}$-variation admissible}

Let $(X_n)_{n\in \N}$ be a Schauder decomposition of a Banach space~$X$.
By a {\em block sequence with respect to $(X_n)_{n\in \N}$} we mean 
a sequence $(x_j)_{j\in \N}$ in~$X$ for which there is a sequence $(I_j)_{j\in \N}$ of non-empty finite subsets of~$\N$ such that 
$\max(I_j)<\min(I_{j+1})$ and $x_j\in \bigoplus_{n\in I_j}X_n$ for all $j\in \N$. 
We say that $(X_n)_{n\in \N}$ is {\em shrinking} if $\|P_k^*(x^*)-x^*\|\to 0$ as $k\to \infty$ for every $x^*\in X^*$,
where $P_k$ is the associated projection from $X$ onto $\bigoplus_{n=1}^k X_n$.
When $X$ has a Schauder basis $(e_n)_{n\in \N}$ and each $X_n$ is the subspace generated by~$e_n$, then 
$(X_n)_{n\in \N}$ is shrinking if and only if $(e_n)_{n\in \N}$ is shrinking in the usual sense.

The following fact belongs to the folklore and can be proved as in the case of Schauder bases
(see, e.g., \cite[Proposition~3.2.7]{alb-kal}).

\begin{pro}\label{pro:shrinking}
Let $(X_n)_{n\in \N}$ be a Schauder decomposition of a Banach space~$X$. The following statements are equivalent:
\begin{enumerate}
\item[(i)] $(X_n)_{n\in \N}$ is shrinking.
\item[(ii)] Every bounded block sequence with respect to $(X_n)_{n\in \N}$ is weakly null. 
\end{enumerate}
\end{pro}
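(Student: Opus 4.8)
The plan is to follow the classical argument for shrinking Schauder bases, rephrased in terms of the projections $P_k$ and their adjoints $P_k^*$, using throughout the properties of Schauder decompositions recorded in Subsection~\ref{subsection:Schauder}: the uniform bound $\sup_{k}\|P_k\|<\infty$, the relation $P_{k'}\circ P_k=P_{\min\{k,k'\}}$, and the pointwise convergence $\|P_k(x)-x\|\to 0$ for every $x\in X$. The implication (i)$\impli$(ii) is the direct one. Given a bounded block sequence $(x_j)_{j\in\N}$ with $x_j\in\bigoplus_{n\in I_j}X_n$, I would set $a_j:=\min(I_j)-1$ and $b_j:=\max(I_j)$, so that $P_{a_j}(x_j)=0$ and $P_{b_j}(x_j)=x_j$, whence $x_j=(P_{b_j}-P_{a_j})(x_j)$. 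For any $x^*\in X^*$ this gives $|\langle x^*,x_j\rangle|=|\langle (P_{b_j}^*-P_{a_j}^*)x^*,x_j\rangle|\leq \|x_j\|\,(\|P_{b_j}^*x^*-x^*\|+\|P_{a_j}^*x^*-x^*\|)$. Since the blocks are successive we have $a_j,b_j\to\infty$, so the shrinking hypothesis forces the right-hand side to $0$; hence $(x_j)_{j\in\N}$ is weakly null.

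For the converse (ii)$\impli$(i) I would argue by contradiction. Writing $Q_k:={\rm id}_X-P_k$, I would record that $\sup_k\|Q_k\|<\infty$, that $P_k\circ Q_k=0$, and that $\|P_k^*x^*-x^*\|=\|Q_k^*x^*\|$. If shrinking fails, there are $x^*\in X^*$ and $\epsilon>0$ with $\|Q_k^*x^*\|>2\epsilon$ for infinitely many $k\in\N$. For each such $k$ I would pick a unit vector $y$ with $\langle x^*,Q_k y\rangle>2\epsilon$ (adjusting the sign of $y$) and put $z:=Q_k y$; then $\|z\|\leq \sup_k\|Q_k\|=:C$, $\langle x^*,z\rangle>2\epsilon$, and $P_k z=P_k Q_k y=0$, so in fact $P_m z=0$ for every $m\leq k$ (using $P_m\circ P_k=P_{\min\{m,k\}}$).

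The heart of the proof is then a gliding-hump construction turning these vectors into a genuine block sequence. Starting from $m_0:=0$, I would proceed inductively: having chosen $m_{j-1}$, select from the infinite set of ``bad'' indices some $k_j\geq m_{j-1}$ together with a corresponding vector $z_j$ as above, so that $\|z_j\|\leq C$, $\langle x^*,z_j\rangle>2\epsilon$ and $P_{m_{j-1}}z_j=0$; then, since $\|z_j-P_m z_j\|=\|Q_m z_j\|\to 0$ as $m\to\infty$, choose $m_j>m_{j-1}$ so large that $\|Q_{m_j}z_j\|<\epsilon/(\|x^*\|+1)$. Setting $w_j:=(P_{m_j}-P_{m_{j-1}})(z_j)=P_{m_j}z_j$ (the second equality because $P_{m_{j-1}}z_j=0$) produces vectors supported on $\{m_{j-1}+1,\dots,m_j\}$, hence a uniformly bounded block sequence with respect to $(X_n)_{n\in\N}$. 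Moreover $w_j-z_j=-Q_{m_j}z_j$, so $|\langle x^*,w_j\rangle|\geq \langle x^*,z_j\rangle-\|x^*\|\,\|Q_{m_j}z_j\|>2\epsilon-\epsilon=\epsilon$ for all $j$.

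This contradicts hypothesis~(ii), under which $(w_j)_{j\in\N}$ would be weakly null and hence $\langle x^*,w_j\rangle\to 0$; therefore the decomposition must be shrinking. The routine direction (i)$\impli$(ii) uses only the definition of shrinking, so the real work — and the step I expect to be most delicate — is organizing the gliding hump so that the approximants $w_j$ are simultaneously exact blocks of the decomposition, uniformly bounded, and still tested by $x^*$ with values bounded away from~$0$. The key compatibility making the induction go through is that choosing $k_j\geq m_{j-1}$ from the infinite set of bad indices automatically yields the support condition $P_{m_{j-1}}z_j=0$, which is exactly what is needed for $w_j=P_{m_j}z_j$ to sit in the prescribed block.
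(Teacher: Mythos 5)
Your proof is correct, and it is precisely the argument the paper has in mind: the paper omits the proof, noting only that the statement ``can be proved as in the case of Schauder bases'' (citing \cite[Proposition~3.2.7]{alb-kal}), and your write-up is a faithful adaptation of that classical proof to decompositions --- the direct estimate via $P_{b_j}^*-P_{a_j}^*$ for (i)$\impli$(ii), and the gliding-hump construction of a bounded block sequence separated from $0$ by a fixed functional for (ii)$\impli$(i). In particular, your key observation that choosing the bad index $k_j\geq m_{j-1}$ forces $P_{m_{j-1}}z_j=P_{m_{j-1}}P_{k_j}z_j=0$, so that $w_j=(P_{m_j}-P_{m_{j-1}})z_j$ is a genuine block, is exactly the compatibility that makes the folklore argument go through.
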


\begin{theo}\label{theo:general}
Let $X$ be a Banach space having a shrinking Schauder decomposition $(X_n)_{n\in \N}$ such that $X_n$ is finite-dimensional for all $n\in \N$.
Then $X$ is $\mathcal{A}_{cc}$-variation admissible. In particular, every Banach space having a shrinking Schauder basis is $\mathcal{A}_{cc}$-variation admissible.
\end{theo}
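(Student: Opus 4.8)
The plan is to argue by contradiction: assume that $I_\nu$ is Dunford-Pettis but $|\nu|(\Omega)=\infty$, and manufacture a weakly null sequence in $L_1(\nu)$ on which $I_\nu$ fails to be norm null. Since a shrinking Schauder basis yields a shrinking Schauder decomposition into one-dimensional (hence finite-dimensional) subspaces, the ``in particular'' clause follows at once from the general statement, so I concentrate on the latter. The engine of the proof is Lemma~\ref{lem:projection2}, and the first task is to verify its hypotheses. Relative norm compactness of $\mathcal{R}(\nu)$ is free from Proposition~\ref{pro:charDP}, since $I_\nu$ is Dunford-Pettis. The finite-dimensionality hypothesis enters precisely here: for each $k\in\N$ the space $\bigoplus_{n=1}^k X_n$ is finite-dimensional, so the vector measure $P_k\circ \nu$ takes values in a finite-dimensional space and therefore has finite variation, giving $|P_k\circ \nu|(\Omega)<\infty$. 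Thus Lemma~\ref{lem:projection2} applies and produces pairwise disjoint sets $(B_j)_{j\in\N}$, normalized functions $f_j=\frac{1}{\|\nu\|(B_j)}\chi_{B_j}$ with $\|f_j\|_{L_1(\nu)}=1$, projections $R_j=P_{k_{j+1}}-P_{k_j}$, and $\epsilon>0$ satisfying conclusions~(i) and~(ii).

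The heart of the argument, and the step I expect to be the main obstacle, is showing that $(f_j)_{j\in\N}$ is weakly null; this is exactly where the shrinking hypothesis is used. Rather than attacking the weak topology directly, I would first show that $(f_j)$ is null for the weaker topology $\sigma(L_1(\nu),\Gamma_\nu)$ and then upgrade to weak nullity via Proposition~\ref{pro:convergenceproperty} (available because $\mathcal{R}(\nu)$ is relatively norm compact). To establish $\sigma(L_1(\nu),\Gamma_\nu)$-nullity, fix $(g,x^*)\in B_{L_\infty(\nu)}\times B_{X^*}$ and write $\gamma_{(g,x^*)}(f_j)=x^*(I_\nu(f_jg))$. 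Conclusion~(i) of the lemma lets me replace $I_\nu(f_jg)$ by $R_j(I_\nu(f_jg))$ up to an error of norm at most $2^{-j}$, and then $x^*(R_j(I_\nu(f_jg)))=(R_j^*x^*)(I_\nu(f_jg))$. Since $\|I_\nu(f_jg)\|_X\leq \|f_jg\|_{L_1(\nu)}\leq 1$ (using Proposition~\ref{pro:norming}(i) and $\|I_\nu\|=1$) and $R_j^*=P_{k_{j+1}}^*-P_{k_j}^*$, the shrinking property forces $\|R_j^*x^*\|\to 0$ as $j\to\infty$, whence $(R_j^*x^*)(I_\nu(f_jg))\to 0$. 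Combining this with the vanishing error term yields $\gamma_{(g,x^*)}(f_j)\to 0$, as required.

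Once $(f_j)$ is known to be weakly null, the contradiction is immediate: because $I_\nu$ is Dunford-Pettis we obtain $\|I_\nu(f_j)\|_X\to 0$, whence $\|I_\nu(f_j)-I_\nu(f_{j'})\|_X\to 0$ as $j,j'\to\infty$. This flatly contradicts conclusion~(ii) of Lemma~\ref{lem:projection2}, which keeps these differences bounded below by $\epsilon$ for all distinct large indices. Hence the assumption $|\nu|(\Omega)=\infty$ is untenable, and $X$ is $\mathcal{A}_{cc}$-variation admissible. The only genuinely delicate point is the passage to weak nullity in the second paragraph, which marries the approximate range control of $R_j$ furnished by the lemma with the decay $\|R_j^*x^*\|\to 0$ that the shrinking hypothesis supplies; everything else is routine bookkeeping.
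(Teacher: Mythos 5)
Your proof is correct, and it rests on the same two pillars as the paper's own argument --- Lemma~\ref{lem:projection2} (with an identical verification of its hypotheses) and Proposition~\ref{pro:convergenceproperty} --- but you run the logic in the opposite direction, so that the roles of the two main hypotheses are swapped. The paper first uses the Dunford--Pettis property together with conclusion~(ii) of the lemma to see that $(f_j)_{j\in\N}$ is \emph{not} weakly convergent, then invokes Proposition~\ref{pro:convergenceproperty} contrapositively to extract some $g\in B_{L_\infty(\nu)}$ for which $(I_\nu(f_jg))_{j\in\N}$ fails to be weakly null, and finally derives the contradiction from the shrinking hypothesis through Proposition~\ref{pro:shrinking}: by conclusion~(i), $(R_j(I_\nu(f_jg)))_{j\in\N}$ would be a bounded block sequence that is not weakly null. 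You instead use the shrinking hypothesis in its dual form ($\|R_j^*x^*\|\to 0$, which indeed follows from $\|P_k^*(x^*)-x^*\|\to 0$ and the fact that $(k_j)_{j\in\N}$ is strictly increasing) together with conclusion~(i) to prove directly that $(f_j)_{j\in\N}$ is $\sigma(L_1(\nu),\Gamma_\nu)$-null, upgrade this to weak nullity by a direct application of Proposition~\ref{pro:convergenceproperty} --- here you should say explicitly that the weak limit must be $0$ because $\sigma(L_1(\nu),\Gamma_\nu)$ is Hausdorff and weaker than the weak topology, since the proposition only asserts weak \emph{convergence} --- and then obtain the contradiction by playing the Dunford--Pettis property against conclusion~(ii). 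Your variant is slightly more economical in that Proposition~\ref{pro:shrinking} (the block-sequence characterization of shrinking decompositions) is not needed at all, whereas the paper's variant never needs the adjoint projections; both are equally rigorous, and everything else (the use of Proposition~\ref{pro:charDP}, the finite-dimensionality giving $|P_k\circ\nu|(\Omega)<\infty$, and the reduction of the ``in particular'' clause) coincides with the paper.
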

\begin{proof}
Let $(\Omega,\Sigma)$ be a measurable space and let $\nu\in {\rm ca}(\Sigma,X)$ such that
$I_\nu$ is Dunford-Pettis. Then $\mathcal{R}(\nu)$ is relatively norm compact (Proposition~\ref{pro:charDP}). 
Fix $k\in \N$ and denote by $P_k$ the associated projection from~$X$ onto $\bigoplus_{n=1}^k X_n$. Since
$\bigoplus_{n=1}^k X_n$ is finite-dimensional, we have $|P_k \circ \nu|(\Omega)<\infty$.
By renorming, we can assume that $\|P_k\|=1$ for all $k\in \N$.

Suppose, by contradiction, that $|\nu|(\Omega)=\infty$. Let $(f_j)_{j\in \N}$ and $(R_j)_{j\in \N}$ be as in 
Lemma~\ref{lem:projection2}. Since $(I_\nu(f_j))_{j\in \N}$ is not norm convergent (by property~(ii) in Lemma~\ref{lem:projection2})
and $I_\nu$ is Dunford-Pettis, 
the sequence $(f_j)_{j\in \N}$ is not weakly convergent in~$L_1(\nu)$. 
In addition, $\|f_j\|_{L_1(\nu)}=1$ for all $j\in \N$. By Proposition~\ref{pro:convergenceproperty}, there is $g\in B_{L_\infty(\nu)}$ such that the sequence
$(I_\nu(f_j g))_{j\in \N}$ is not weakly null in~$X$. Then 
$(R_j(I_\nu(f_j g)))_{j\in \N}$ is a bounded block sequence with respect to $(X_n)_{n\in \N}$ which cannot be weakly null, by property~(i) in Lemma~\ref{lem:projection2}.
This contradicts that $(X_n)_{n\in \N}$ is shrinking (Proposition~\ref{pro:shrinking}).
\end{proof}

The last ingredient of our proof that Asplund spaces are $\mathcal{A}_{cc}$-variation admissible is the following deep result
of Zippin~\cite{zip} (cf. \cite[Theorem~III.1]{gho-alt2} and \cite{sch-7}):

\begin{theo}[Zippin]\label{theo:Zippin}
Every Banach space having separable dual is isomorphic to a subspace of a Banach space having a shrinking Schauder basis. 
\end{theo}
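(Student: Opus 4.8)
The plan is to reduce the theorem to two more manageable statements and to rely on the Davis--Figiel--Johnson--Pe\l czy\'nski (DFJP) interpolation machinery for the hard step. First I would record the elementary reduction: if $X^*$ is separable then $X$ itself is separable, so throughout we may assume that $X$ is a separable Banach space with $X^*$ separable. The target ``shrinking'' property is conveniently reformulated by Proposition~\ref{pro:shrinking}: a Schauder decomposition is shrinking precisely when every bounded block sequence with respect to it is weakly null. I would then split the goal into (a) embedding $X$ isomorphically into a space $Z$ possessing a \emph{shrinking finite-dimensional decomposition} (FDD), and (b) passing from a shrinking FDD to a genuine shrinking Schauder basis.

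For step~(b) I would invoke the standard ``unblocking'' technique: a space with a shrinking FDD embeds isomorphically into a space with a shrinking basis, obtained by placing the finite-dimensional pieces into an auxiliary summing space and splitting each of them into one-dimensional coordinates. Shrinkingness is preserved because a bounded block sequence of the new basis is essentially a bounded block sequence of the coarser FDD, and is therefore weakly null by Proposition~\ref{pro:shrinking}. This part is soft, and I would treat it as folklore.

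The heart of the matter is step~(a), and here the separability of $X^*$ must be used in an essential way. The idea is to begin from a concrete coordinate structure: embed $X$ into $C(\Delta)$, the space of continuous functions on the Cantor set, which carries a monotone FDD $(G_n)$ arising from the dyadic clopen partitions, with associated finite-rank projections $Q_n \to \mathrm{id}$ strongly. This FDD is very far from shrinking, since $C(\Delta)^*$ is a nonseparable space of measures; the task is to manufacture out of it a space that still contains $X$ but on which the (blocked) decomposition becomes shrinking. I would do this with a DFJP-type interpolation: choosing a rapidly increasing sequence of blocks $H_k = \bigoplus_{n\in I_k} G_n$ together with weakly compact, convex, symmetric sets adapted to the separable dual $X^*$, one builds an interpolation space $Z$ lying between $X$ and $C(\Delta)$ for which the inclusion $X \hookrightarrow Z$ is an isomorphism onto its image while $(H_k)$ is a shrinking FDD of $Z$.

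The main obstacle is exactly this simultaneous control in step~(a): the interpolation scales and the block sizes must be tuned so that, on the one hand, the interpolation norm stays equivalent to the original norm on the copy of $X$ (so that $X$ embeds), and, on the other hand, the biorthogonal functionals of the blocked decomposition span $Z^*$ (so that the FDD is shrinking, equivalently every bounded block sequence is weakly null). Achieving both at once is where the separability of $X^*$ genuinely enters, since it is what allows the weak$^{*}$-compact, metrizable ball $B_{X^*}$ to be exhausted by the finitely many coordinates captured at each scale. This is the deep part of Zippin's argument, and it is why I would ultimately quote \cite{zip} rather than reproduce the quantitative estimates here.
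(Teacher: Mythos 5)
The paper does not prove this theorem at all: it is invoked as a deep external result, with references to Zippin \cite{zip} (cf. \cite{gho-alt2} and \cite{sch-7}). Your proposal ultimately does the same thing --- the decisive quantitative step is deferred to \cite{zip} --- and the scaffolding you wrap around that citation (embedding into $C(\Delta)$, DFJP-type interpolation on blockings of its natural FDD to force shrinkingness, then unblocking a shrinking FDD into a shrinking basis) is precisely the route of Schlumprecht's proof \cite{sch-7}, which the paper also cites, so your treatment is essentially the same as the paper's.
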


\begin{cor}\label{cor:Asplund}
Every Asplund space is $\mathcal{A}_{cc}$-variation admissible.
\end{cor}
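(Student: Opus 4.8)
The plan is to assemble the three main ingredients of this section rather than to do any new computation: Theorem~\ref{theo:general}, Zippin's Theorem~\ref{theo:Zippin}, and the stability properties recorded in Propositions~\ref{pro:stability1} and~\ref{pro:stability2}. The crucial point to recognize at the outset is that an Asplund space need \emph{not} have separable dual, only its separable subspaces do; consequently Zippin's theorem cannot be applied to~$X$ directly. The first step is therefore a localization: by Proposition~\ref{pro:stability2}(ii), it suffices to prove that every separable subspace of~$X$ is $\mathcal{A}_{cc}$-variation admissible. This reduces the problem to the separable setting, where Zippin's theorem becomes available.

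So let $Z \sub X$ be an arbitrary separable subspace. Since $X$ is Asplund, $Z$ has separable dual. By Zippin's Theorem~\ref{theo:Zippin}, $Z$ is isomorphic to a subspace~$W$ of some Banach space~$Y$ that admits a shrinking Schauder basis. Theorem~\ref{theo:general} (its ``in particular'' clause) then yields that $Y$ is $\mathcal{A}_{cc}$-variation admissible. Passing down to the subspace via Proposition~\ref{pro:stability1}(ii) gives that $W$ is $\mathcal{A}_{cc}$-variation admissible, and since $\mathcal{A}_{cc}$-variation admissibility is an isomorphic invariant by Proposition~\ref{pro:stability1}(i), the isomorphism $Z \cong W$ transfers the property back to~$Z$. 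Thus every separable subspace of~$X$ is $\mathcal{A}_{cc}$-variation admissible, and Proposition~\ref{pro:stability2}(ii) completes the argument.

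The genuinely hard work is already packaged in the results being quoted, so the expected obstacle is not in this corollary itself but in seeing clearly \emph{why the reduction to separable subspaces is forced}. The temptation would be to invoke Zippin on~$X$ at once; the point is that the Asplund hypothesis is exactly the localized form of ``separable dual,'' and it is Proposition~\ref{pro:stability2}(ii) that bridges the separable conclusion of Theorem~\ref{theo:general} (through Zippin) to the non-separable space~$X$. Once this structural observation is in place, the proof is a short chain of applications of the cited statements, with no further analytic content required.
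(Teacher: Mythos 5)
Your proof is correct and follows essentially the same route as the paper: reduction to separable subspaces via Proposition~\ref{pro:stability2}(ii), then Zippin's Theorem~\ref{theo:Zippin} combined with Theorem~\ref{theo:general} and the stability facts of Proposition~\ref{pro:stability1}. You merely spell out the isomorphism step (Proposition~\ref{pro:stability1}(i)) that the paper leaves implicit, which is a fine level of added detail.
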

\begin{proof} By Proposition~\ref{pro:stability2}(ii), it suffices to prove that every Banach space having separable dual is $\mathcal{A}_{cc}$-admissible.
 Since every Banach space having a shrinking Schauder basis is $\mathcal{A}_{cc}$-variation admissible
(Theorem~\ref{theo:general}), the conclusion follows from Theorem~\ref{theo:Zippin} and Proposition~\ref{pro:stability1}(ii). 
\end{proof}

\subsection{An application of the Davis-Figiel-Johnson-Pe{\l}czy\'{n}ski factorization}

We begin by recalling the refinement of the DFJP factorization developed by Lima, Nygaard and Oja in~\cite{lim-nyg-oja}. 
Let $Z$ and $X$ be Banach spaces, let $T:Z\to X$ be a (non-zero) operator and consider the set
$$
	K:=\frac{1}{\|T\|}\overline{T(B_Z)} \sub B_X.
$$ 
Fix $a \in (1,\infty)$ and write
$$
	f(a):=\left(\sum_{n=1}^\infty\frac{a^{n}}{(a^{n}+1)^2}\right)^{1/2}.
$$
For each $n\in \N$, let $\|\cdot\|_n$ be the Minkowski functional
of $K_n:=a^{n/2} K +a^{-n/2} B_X$, that is,
$$
	\|x\|_n:=\inf\{t>0: x\in tK_n\}
	\quad
	\text{for all $x\in X$.}
$$
The following theorem can be found in \cite[Lemmas 1.1 and 2.1, Theorem~2.2]{lim-nyg-oja}, 
with the exception of part (vi), which can be obtained similarly as for the usual DFJP factorization (see, e.g., \cite[\S3]{bou-J}).

\begin{theo}\label{theo:DFJP-LNO}
Under the previous assumptions, the following statements hold:
\begin{enumerate}
\item[(i)] $Y:=\{x\in X: \ \sum_{n=1}^\infty \|x\|_n^2 <\infty\}$
is a Banach space with the norm 
$$
	\|x\|_Y:=\left(\sum_{n=1}^\infty \|x\|_n^2\right)^{1/2}.
$$
\item[(ii)] $K \sub f(a) B_{Y}$ and the identity map $J: Y \to X$ is an operator.
\item[(iii)] $T$ factors as
\begin{equation}\label{eqn:DFJPLNO}
	\xymatrix@R=3pc@C=3pc{Z
	\ar[r]^{T} \ar[d]_{S} & X\\
	Y  \ar@{->}[ur]_{J}  & \\
	}
\end{equation}
where $S$ is an operator. 
\item[(iv)] $J$ is a norm-to-norm homeomorphism when restricted to~$K$. In fact:
$$
	\|x\|_Y^2 \leq \Big(\frac{1}{4}+\frac{1}{\ln a}\Big) \|x\| \quad \text{for all $x\in K$}.
$$
Therefore, if $T$ is Dunford-Pettis, then $S$ is Dunford-Pettis as well.
\item[(v)] If $T$ is weakly compact, then $Y$ is reflexive. 
\item[(vi)] If $T$ is Asplund, then $Y$ is Asplund. 
\item[(vii)] If $a$ is the unique element of~$(1,\infty)$ satisfying $f(a)=1$, then $\|S\|=\|T\|$ and $\|J\|=1$.
In this case, \eqref{eqn:DFJPLNO} is called the {\em DFJP-LNO factorization} of~$T$.
\end{enumerate}
\end{theo}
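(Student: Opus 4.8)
Statements~(i)--(v) and~(vii) are precisely the content of \cite[Lemmas~1.1 and~2.1, Theorem~2.2]{lim-nyg-oja}, so the plan is to invoke that reference for them: the definition of~$Y$ and its norm, completeness (via the isometric diagonal embedding of~$Y$ into the $\ell_2$-sum of the spaces $(X,\|\cdot\|_n)$), the containment $K\sub f(a)B_Y$ (which is exactly the computation producing the value of~$f(a)$), the factorization, the Hölder-type estimate on~$K$, and the reflexivity of~$Y$ for weakly compact~$T$. The only statement that genuinely needs an argument is~(vi), and I would prove it by running the classical DFJP preservation argument for the Asplund property, checking that replacing the dyadic scale $2^n$ by the geometric scale $a^{n/2}$ leaves the reasoning intact. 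Conceptually, part~(vi) is the Asplund analogue of part~(v): there the weak compactness of~$K$ is promoted to reflexivity of~$Y$, while here the ``Asplund-set'' character of~$K$ must be promoted to the Asplund property of~$Y$.

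First I would recast both hypothesis and conclusion in terms of fragmentability. That $T$ is Asplund is equivalent to saying that $K=\frac{1}{\|T\|}\overline{T(B_Z)}$ is an \emph{Asplund set} in~$X$, i.e.\ that $(B_{X^*},w^*)$ is fragmented by the seminorm $\rho_K(x^*):=\sup_{x\in K}|\langle x^*,x\rangle|$. On the other hand, a Banach space is Asplund if and only if its unit ball is an Asplund set, that is, if and only if $(B_{Y^*},w^*)$ is fragmented by the dual norm $\|\cdot\|_{Y^*}$. Thus the goal reduces to deducing the fragmentation of $(B_{Y^*},w^*)$ by $\|\cdot\|_{Y^*}$ from the fragmentation of $(B_{X^*},w^*)$ by $\rho_K$.

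To link the two I would exploit the duality built into the construction. For each~$n$ the dual of the gauge $\|\cdot\|_n$ is the support function of~$K_n$, namely $a^{n/2}\rho_K+a^{-n/2}\|\cdot\|_{X^*}$, and $\|\cdot\|_{Y^*}$ is the associated infimal $\ell_2$-combination of these support functions (coming from $Y^*=\ell_2(\{(X^*,\|\cdot\|_n^*)\})/Y^\perp$). The decisive point is that the ``bad'' contributions $a^{-n/2}\|\cdot\|_{X^*}$, for which no fragmentation is available since $X$ itself need not be Asplund, are damped by the factors $a^{-n/2}\to 0$ and made summable; hence for any prescribed accuracy only finitely many scales matter, and on those finitely many scales the fragmentation by~$\rho_K$ takes over. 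This is exactly the mechanism by which, in part~(v), the vanishing terms $a^{-n/2}B_X$ allow the weak compactness of~$K$ to force weak compactness of~$B_Y$.

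I expect the main obstacle to be precisely this transfer made quantitative: producing, for every nonempty $w^*$-closed $C\sub B_{Y^*}$ and every $\varepsilon>0$, a relatively $w^*$-open slice of~$C$ of $\|\cdot\|_{Y^*}$-diameter below~$\varepsilon$, using only the $\rho_K$-fragmentation of $(B_{X^*},w^*)$ together with the summability of the scaling. Once a scale-truncation estimate isolates the finitely many relevant indices, the slicing argument is a routine adaptation of \cite[\S3]{bou-J} with base~$a$ in place of~$2$, and no idea beyond the usual DFJP bookkeeping is needed. (As a backup, one could instead invoke that the Asplund operators form a closed, injective and surjective operator ideal and appeal to the general compatibility of the DFJP factorization with such ideals, but the hands-on fragmentation route is the one matching the phrase ``similarly as for the usual DFJP factorization''.)
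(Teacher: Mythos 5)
Your proposal takes essentially the same route as the paper: the paper likewise dispatches parts (i)--(v) and (vii) by citing \cite[Lemmas~1.1 and~2.1, Theorem~2.2]{lim-nyg-oja}, and for part~(vi) it only remarks that the claim ``can be obtained similarly as for the usual DFJP factorization'', pointing to \cite[\S3]{bou-J}. Your fragmentation/Asplund-set transfer sketch (using $B_Y\sub a^{n/2}K+a^{-n/2}B_X$, the quotient description of~$Y^*$, and the damping of the $\|\cdot\|_{X^*}$-terms) is a correct instantiation of that classical adaptation, so you are, if anything, more explicit than the paper itself.
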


In \cite{nyg-rod} the DFJP-LNO factorization was applied to the integration operator of a vector measure.
Our next proposition gathers some of the results obtained in \cite[Theorems~3.7 and~4.5]{nyg-rod}:

\begin{pro}\label{theo:Im}
Let $X$ be a Banach space, let $(\Omega,\Sigma)$ be a measurable space, let $\nu\in {\rm ca}(\Sigma,X)$ and let 
$$
	\xymatrix@R=3pc@C=3pc{L_1(\nu)
	\ar[r]^{I_\nu} \ar[d]_{S} & X\\
	Y  \ar@{->}[ur]_{J}  & \\
	}
$$
be the DFJP-LNO factorization of~$I_\nu$. Define $\tilde{\nu}:\Sigma\to Y$ by $\tilde{\nu}(A):=S(\chi_A)$ for all $A\in \Sigma$. Then:
\begin{enumerate}
\item[(i)] $\tilde{\nu} \in {\rm ca}(\Sigma,Y)$, $\nu=J\circ \tilde{\nu}$ and $\mathcal{N}(\nu)=\mathcal{N}(\tilde{\nu})$.
\item[(ii)] $L_1(\tilde{\nu})=L_1(\nu)$, with $\|f\|_{L_1(\nu)}=\|f\|_{L_1(\tilde{\nu})}$ for all $f\in L_1(\nu)$, and $S=I_{\tilde{\nu}}$.
\item[(iii)] $\tilde{\nu}$ has finite (resp., $\sigma$-finite) variation whenever $\nu$ does.
\end{enumerate}
\end{pro}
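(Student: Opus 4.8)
The plan is to read off all three parts from the two defining identities of the DFJP--LNO factorization, namely $I_\nu=J\circ S$ and $\tilde{\nu}(A)=S(\chi_A)$, together with the normalization $\|S\|=\|I_\nu\|=1$, $\|J\|=1$ from Theorem~\ref{theo:DFJP-LNO}(vii) and the injectivity of~$J$ (it is the identity map of Theorem~\ref{theo:DFJP-LNO}(ii)). For part~(i) I would first note that $A\mapsto\chi_A$ is a countably additive $L_1(\nu)$-valued measure: for pairwise disjoint $(A_n)_{n\in\N}$ one has $\|\chi_{\bigcup_{n>N}A_n}\|_{L_1(\nu)}=\|\nu\|(\bigcup_{n>N}A_n)\to0$ by countable additivity of~$\nu$ and order continuity of the norm, so $\chi_{\bigcup_n A_n}=\sum_n\chi_{A_n}$ in $L_1(\nu)$; composing with the bounded operator~$S$ gives $\tilde{\nu}\in{\rm ca}(\Sigma,Y)$. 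The identity $\nu=J\circ\tilde{\nu}$ is immediate from $J(\tilde{\nu}(A))=J(S(\chi_A))=I_\nu(\chi_A)=\nu(A)$, which also yields $\mathcal{N}(\tilde{\nu})\subseteq\mathcal{N}(\nu)$; the reverse inclusion uses injectivity of~$J$, since $J(\tilde{\nu}(B'))=\nu(B')=0$ forces $\tilde{\nu}(B')=0$ for every measurable $B'\subseteq B$ when $B$ is $\nu$-null.

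The heart of the matter is part~(ii). For each $y^*\in B_{Y^*}$ the scalar measure $y^*\tilde{\nu}$ satisfies $(y^*\tilde{\nu})(A)=(S^*y^*)(\chi_A)$, so it is the measure $m_\psi(A):=\psi(\chi_A)$ attached to the functional $\psi:=S^*y^*\in L_1(\nu)^*$, with $\|\psi\|\le\|S\|=1$. The key estimate is
$$
\int_\Omega|f|\,d|m_\psi|\le\|\psi\|\,\|f\|_{L_1(\nu)}\qquad\text{for all }f\in L_1(\nu).
$$
For $\Sigma$-simple $f$ this follows by writing $\int_\Omega|f|\,d|m_\psi|=\sup\{\psi(fh): h\text{ simple},\ |h|\le1\}$ (using $\int(\text{simple})\,dm_\psi=\psi(\text{simple})$) and invoking $\|fh\|_{L_1(\nu)}\le\|f\|_{L_1(\nu)}$ from Proposition~\ref{pro:norming}(i); it passes to arbitrary~$f$ by monotone approximation. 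Applied to $\psi=S^*y^*$, this simultaneously shows that every $\nu$-integrable~$f$ is $\tilde{\nu}$-integrable, that the candidate $\int_A f\,d\tilde{\nu}:=S(f\chi_A)$ satisfies the defining scalar identity (whence $S=I_{\tilde{\nu}}$), and that $\|f\|_{L_1(\tilde{\nu})}\le\|f\|_{L_1(\nu)}$.

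For the reverse estimate, which is softer, I would use $\nu=J\circ\tilde{\nu}$ to get $x^*\nu=(J^*x^*)\tilde{\nu}$, hence $|x^*\nu|=|(J^*x^*)\tilde{\nu}|$; since $\|J^*x^*\|\le\|J\|=1$ for $x^*\in B_{X^*}$, taking suprema gives $\|f\|_{L_1(\nu)}\le\|f\|_{L_1(\tilde{\nu})}$, while Proposition~\ref{pro:composition} applied to~$J$ ensures that $\tilde{\nu}$-integrability forces $\nu$-integrability. Combining the two inequalities and $\|S\|=\|J\|=1$ yields the isometric identity $L_1(\nu)=L_1(\tilde{\nu})$. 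For part~(iii) I would bound the variation of~$\tilde{\nu}$ directly: since $\|\tilde{\nu}(A)\|_Y=\|S(\chi_A)\|_Y\le\|S\|\,\|\chi_A\|_{L_1(\nu)}=\|\nu\|(A)\le|\nu|(A)$, summing over any finite measurable partition $\{A_i\}$ of a set~$E$ gives $\sum_i\|\tilde{\nu}(A_i)\|_Y\le\sum_i|\nu|(A_i)=|\nu|(E)$, so $|\tilde{\nu}|(E)\le|\nu|(E)$; taking $E=\Omega$ settles the finite-variation case and applying the bound on each piece of a partition witnessing $\sigma$-finiteness settles the $\sigma$-finite case.

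The main obstacle is the reverse inclusion in part~(ii): proving that every $\nu$-integrable function is already $\tilde{\nu}$-integrable and that the two norms agree \emph{exactly} rather than merely up to equivalence. This is precisely where the scalar-measure estimate and the LNO normalization $\|S\|=\|J\|=1$ are indispensable, since an arbitrary factorization would only deliver equivalent norms. Everything else reduces to the identities $I_\nu=J\circ S$ and $\tilde{\nu}(A)=S(\chi_A)$ together with Propositions~\ref{pro:composition} and~\ref{pro:norming}.
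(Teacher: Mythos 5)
Your proof is correct, but note that the paper itself contains no argument for this proposition: it is stated as a compilation of results quoted from \cite[Theorems~3.7 and~4.5]{nyg-rod}, so the only possible comparison is with that external reference, not with an internal proof. Your argument is a legitimate, self-contained substitute, and it isolates exactly what is needed: besides the identities $I_\nu=J\circ S$ and $\tilde{\nu}(A)=S(\chi_A)$, you use only the injectivity of~$J$ (for $\mathcal{N}(\nu)\sub\mathcal{N}(\tilde{\nu})$) and the normalizations $\|S\|=\|I_\nu\|=1$, $\|J\|=1$ from Theorem~\ref{theo:DFJP-LNO}(vii) (for the exact isometry and for $|\tilde{\nu}|\leq|\nu|$); the interpolation structure of~$Y$ plays no role. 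In particular, your proof shows the statement holds for \emph{any} factorization $I_\nu=J\circ S$ with $J$ injective and $\|S\|,\|J\|\leq 1$, which is a useful way to understand why the DFJP-LNO normalization (rather than the classical DFJP construction, which would only give equivalent norms) is what makes the proposition isometric. The decisive step is where you locate it: the estimate $\int_\Omega|f|\,d|m_\psi|\leq\|\psi\|\,\|f\|_{L_1(\nu)}$ with $\psi=S^*y^*$, which simultaneously yields $|y^*\tilde{\nu}|$-integrability, the inequality $\|f\|_{L_1(\tilde{\nu})}\leq\|f\|_{L_1(\nu)}$, and the scalar identity; the reverse inequality via $J^*x^*$ and Proposition~\ref{pro:composition} is routine, as you say. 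Two points are left implicit and should be spelled out. First, $m_\psi$ is countably additive --- it is $\psi$ composed with the countably additive $L_1(\nu)$-valued measure $A\mapsto\chi_A$ from your part~(i) --- and this is what justifies both the monotone-convergence passage in your key estimate and treating $|m_\psi|$ as a genuine measure. Second, the ``defining scalar identity'' $y^*(S(f\chi_A))=\int_A f\,d(y^*\tilde{\nu})$ for non-simple $f$ is not a formal consequence of the estimate alone: one takes $\Sigma$-simple $s_n\to f$ in $L_1(\nu)$ (density is quoted in Section~\ref{section:preliminaries}), notes $\psi(s_n\chi_A)=\int_A s_n\,dm_\psi$, and passes to the limit using continuity of~$\psi$ on one side and the estimate applied to $f-s_n$ on the other. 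With those two sentences added, your argument is complete.
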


\begin{cor}\label{cor:Asplundoperator}
Let $X$ be a Banach space, let $(\Omega,\Sigma)$ be a measurable space and let $\nu\in {\rm ca}(\Sigma,X)$.
If $I_\nu$ is Asplund and Dunford-Pettis, then $|\nu|(\Omega)<\infty$ and the inclusion map $\iota_\nu: L_1(|\nu|) \to L_1(\nu)$
is a lattice-isomorphism.
\end{cor}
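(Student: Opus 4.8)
The plan is to reduce the statement to the Asplund case already settled in Corollary~\ref{cor:Asplund} by routing the integration operator through its DFJP-LNO factorization, and then to transport the resulting information back to the original measure. First I would form the DFJP-LNO factorization $I_\nu = J\circ S$ of $I_\nu$ through a Banach space~$Y$ exactly as in Proposition~\ref{theo:Im}, and set $\tilde{\nu}(A):=S(\chi_A)$, so that $\tilde{\nu}\in{\rm ca}(\Sigma,Y)$ with $\nu = J\circ\tilde{\nu}$ and $S=I_{\tilde{\nu}}$. Since $I_\nu$ is Asplund, Theorem~\ref{theo:DFJP-LNO}(vi) yields that $Y$ is an Asplund space, hence $Y$ is $\mathcal{A}_{cc}$-variation admissible by Corollary~\ref{cor:Asplund}. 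Since $I_\nu$ is Dunford-Pettis, Theorem~\ref{theo:DFJP-LNO}(iv) gives that $S$ is Dunford-Pettis, that is, $I_{\tilde{\nu}}\in\mathcal{A}_{cc}$.

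The core of the argument is then to apply Proposition~\ref{pro:OIP} to the $Y$-valued measure~$\tilde{\nu}$: because $Y$ is $\mathcal{A}_{cc}$-variation admissible and $I_{\tilde{\nu}}\in\mathcal{A}_{cc}$, the inclusion $\iota_{\tilde{\nu}}:L_1(|\tilde{\nu}|)\to L_1(\tilde{\nu})$ is a lattice-isomorphism. In particular $|\tilde{\nu}|(\Omega)<\infty$ and, since $\iota_{\tilde{\nu}}$ is the surjective inclusion map, $L_1(|\tilde{\nu}|)=L_1(\tilde{\nu})$ as sets of functions.

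It remains to transport these two facts to~$\nu$. As $\nu=J\circ\tilde{\nu}$ with $J$ an operator, I get $|\nu|\leq\|J\|\cdot|\tilde{\nu}|$ pointwise on~$\Sigma$, so $|\nu|(\Omega)<\infty$ and, both being finite non-negative measures, $L_1(|\tilde{\nu}|)\subseteq L_1(|\nu|)$. Combining this with the identity $L_1(\tilde{\nu})=L_1(\nu)$ from Proposition~\ref{theo:Im}(ii) and the always-valid inclusion $L_1(|\nu|)\subseteq L_1(\nu)$, I obtain the chain $L_1(\nu)=L_1(\tilde{\nu})=L_1(|\tilde{\nu}|)\subseteq L_1(|\nu|)\subseteq L_1(\nu)$, forcing $L_1(|\nu|)=L_1(\nu)$. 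Thus $\iota_\nu$ is a bounded injective lattice-homomorphism that is surjective, hence a lattice-isomorphism by the open mapping theorem.

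The main obstacle I anticipate is precisely the asymmetry of the factorization: it supplies only the one-sided domination $|\nu|\leq\|J\|\cdot|\tilde{\nu}|$, since there is no operator $X\to Y$ producing the reverse comparison, and so one cannot expect $|\nu|$ and $|\tilde{\nu}|$ to be equivalent measures with bounded Radon-Nikodym derivatives. The key insight is that this one-sided domination, together with the coincidence $L_1(\nu)=L_1(\tilde{\nu})$, is nonetheless exactly what is needed to squeeze $L_1(|\nu|)$ between $L_1(|\tilde{\nu}|)$ and $L_1(\nu)$, thereby yielding surjectivity of~$\iota_\nu$ without any such equivalence of measures.
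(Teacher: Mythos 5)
Your proof is correct, and its first half is exactly the paper's: both form the DFJP-LNO factorization $I_\nu=J\circ I_{\tilde\nu}$ of Proposition~\ref{theo:Im}, use parts (iv) and (vi) of Theorem~\ref{theo:DFJP-LNO} to see that $I_{\tilde\nu}$ is Dunford-Pettis and $Y$ is Asplund, invoke Corollary~\ref{cor:Asplund} to obtain $|\tilde\nu|(\Omega)<\infty$, and then use $|\nu|\leq\|J\|\,|\tilde\nu|$ to get $|\nu|(\Omega)<\infty$. The two arguments part ways only at the lattice-isomorphism claim. The paper notes that the computation just performed works for an arbitrary Banach space in place of~$X$, i.e.\ every Banach space is $\mathcal{A}$-variation admissible for the operator ideal $\mathcal{A}$ of all Asplund and Dunford-Pettis operators, and then applies Proposition~\ref{pro:OIP} directly to $\nu$ and this ideal, delegating to that proposition all the work of upgrading finiteness of variation to surjectivity of~$\iota_\nu$. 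You instead apply Proposition~\ref{pro:OIP} only to~$\tilde\nu$ (with the ideal $\mathcal{A}_{cc}$, where the admissibility hypothesis is directly available from Corollary~\ref{cor:Asplund}) and transport surjectivity back to~$\nu$ by hand, via the chain $L_1(\nu)=L_1(\tilde\nu)=L_1(|\tilde\nu|)\subseteq L_1(|\nu|)\subseteq L_1(\nu)$; this is legitimate, since the one-sided domination $|\nu|\leq\|J\|\,|\tilde\nu|$ does give $L_1(|\tilde\nu|)\subseteq L_1(|\nu|)$, and the equivalence classes match up because all the relevant null ideals coincide by Proposition~\ref{theo:Im}(i). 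Both routes are sound: yours is more self-contained and makes the mechanism of the transfer explicit, while the paper's is shorter and records the stronger, reusable byproduct that every Banach space is $\mathcal{A}$-variation admissible for the ideal of Asplund Dunford-Pettis operators.
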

\begin{proof}
Let $Y$, $J$ and $\tilde{\nu}$ be as in Proposition~\ref{theo:Im}. Since $I_{\tilde{\nu}}$ is Dunford-Pettis and $Y$ is Asplund
(Theorem~\ref{theo:DFJP-LNO}, parts (iv) and (vi)), we can apply Corollary~\ref{cor:Asplund} to get $|\tilde{\nu}|(\Omega)<\infty$,
hence $|\nu|(\Omega)=|J\circ \tilde{\nu}|(\Omega) \leq |\tilde{\nu}|(\Omega)<\infty$. 
This shows that every Banach space is $\mathcal{A}$-variation admissible, where 
$\mathcal{A}$ denotes the operator ideal of all Asplund and Dunford-Pettis operators.
The last statement follows from Proposition~\ref{pro:OIP}. 
\end{proof}

\section{Dunford-Pettis type properties}\label{section:DP}

\subsection{A remark on equimeasurability}

Let $(\Omega,\Sigma,\mu)$ be a finite measure space. A set $H \sub L_\infty(\mu)$ is said to be {\em equimeasurable} if for every
$\epsilon>0$ there is $A\in \Sigma$ with $\mu(\Omega \setminus A)\leq \epsilon$ such that $\{h\chi_A:h\in H\}$ is relatively norm compact
in~$L_\infty(\mu)$. Theorem~\ref{theo:equimeasurable} below is a particular case of \cite[Theorem~5.5.4]{bou-J}. We include
a direct proof for the sake of completeness. 

\begin{theo}\label{theo:equimeasurable}
Let $(\Omega,\Sigma,\mu)$ be a finite measure space. If $H \sub L_\infty(\mu)$ is relatively weakly compact, then it is equimeasurable.
\end{theo}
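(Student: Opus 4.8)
The plan is to argue by contraposition: assuming $H$ is \emph{not} equimeasurable, I will manufacture out of $H$ a bounded sequence that admits no weakly convergent subsequence, which contradicts the relative weak sequential compactness of~$H$ guaranteed by the Eberlein--\v{S}mulian theorem. After rescaling I may assume $H\subseteq B_{L_\infty(\mu)}$. The first step is a convenient description of relative norm compactness in~$L_\infty$: a bounded family $\{h\chi_A:h\in H\}$ is relatively norm compact if and only if for every $\delta>0$ there is a \emph{finite} measurable partition of~$A$ on whose pieces every $h\in H$ has essential oscillation at most~$\delta$ (a common refinement of oscillation partitions produces a finite $\delta$-net, and conversely). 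Under this Arzel\`a--Ascoli-type reformulation, the failure of equimeasurability yields a single $\epsilon_0>0$ such that no set $A$ with $\mu(\Omega\setminus A)\le\epsilon_0$ admits, for some fixed threshold, finite partitions that make the oscillation of~$H$ uniformly small.

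The heart of the argument is to convert this persistent, \emph{non-removable} oscillation into a sequence incompatible with weak compactness. From the witnessing $\epsilon_0$ I would extract, by an inductive exhaustion, a gap $\theta>0$, functions $h_n\in H$, and measurable sets $S_n^+,S_n^-$ of measure bounded below by some $\beta>0$ on which $h_n$ is essentially $\ge c_n+\theta$ and $\le c_n$ respectively. The key point—what distinguishes genuine non-equimeasurability from a harmless family of ``disjoint bumps'' (which \emph{is} weakly compact)—is that these oscillation sets cannot be swept into a single set of measure $\le\epsilon_0$; this forces them to interleave independently, in the manner of the sign sets of Rademacher functions, rather than to shrink. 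Associating with each index the difference-of-averages functional $\psi_n(f):=\frac{1}{\mu(S_n^+)}\int_{S_n^+}f\,d\mu-\frac{1}{\mu(S_n^-)}\int_{S_n^-}f\,d\mu\in L_\infty(\mu)^*$, which satisfies $\|\psi_n\|\le 2$ and $\psi_n(h_n)\ge\theta$ for all~$n$, I would then show that $(h_n)$ spans a copy of the $\ell_1$-basis in the $L_\infty$-norm (equivalently, that a gliding-hump weak$^*$ cluster functional built from the $\psi_n$ oscillates along the sequence). Since the $\ell_1$-unit-vector basis has no weakly convergent subsequence, this contradicts the relative weak sequential compactness of~$H$.

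The main obstacle is precisely the extraction in the previous paragraph. One must arrange the level sets $S_n^\pm$ so that their oscillations combine \emph{super-additively}: a lower $\ell_1$-estimate $\|\sum_{n\le N}\eta_n(h_n-h)\|_{L_\infty}\ge \theta' N$ for all sign choices $\eta_n$, or equivalently a finitely additive functional in $L_\infty(\mu)^*$ that resonates with the tail of the sequence. Producing this independence is exactly where the non-localizability of the oscillation (the fixed $\epsilon_0$, the uniform gap~$\theta$, and the lower bound $\beta$ on $\mu(S_n^\pm)$) must be used, and it requires a careful gliding-hump selection controlling how the oscillation threshold depends on the ambient set. Everything else in the scheme—the oscillation-partition reformulation of norm compactness, the weak$^*$ compactness of balls in $L_\infty(\mu)^*$, and the passage between weak compactness and weak sequential compactness via Eberlein--\v{S}mulian—is routine.
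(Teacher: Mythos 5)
Your proposal is not a proof but a plan whose central step is missing, and you say so yourself: the paragraph beginning ``The main obstacle is precisely the extraction in the previous paragraph'' concedes that the super-additive combination of oscillations --- the lower $\ell_1$-estimate $\bigl\|\sum_{n\le N}\eta_n(h_n-h)\bigr\|_{L_\infty}\ge \theta' N$, or equivalently the Rosenthal-type independence (``interleaving'') of the level sets $S_n^{\pm}$ --- is never constructed. That extraction is the entire content of the theorem in your approach: mere non-removability of the oscillation (the fixed $\epsilon_0$, the gap $\theta$, the measure lower bound $\beta$) does not by itself produce Boolean independence of the sign sets; passing from ``the oscillation cannot be swept into a set of measure $\le\epsilon_0$'' to ``the sign sets behave like Rademacher sign sets'' requires a genuine Rosenthal-style tree/independent-family argument, which you assert (``this forces them to interleave independently'') rather than prove. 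There is also a smaller unaddressed point: non-equimeasurability gives, for each admissible $A$, some threshold $\delta_A>0$ witnessing non-compactness of $\{h\chi_A\}$, and your scheme needs a threshold uniform in $A$; this requires an exhaustion argument you only gesture at. The surrounding scaffolding (the oscillation-partition reformulation of norm compactness in $L_\infty$, Eberlein--\v{S}mulian, and the fact that a sequence equivalent to the $\ell_1$-basis has no weakly convergent subsequence) is indeed routine and correct, but it is not where the difficulty lies.

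For comparison, the paper's proof avoids any combinatorial extraction entirely: it runs the Davis--Figiel--Johnson--Pe{\l}czy\'{n}ski factorization to write $H\sub T(B_Y)$ with $Y$ reflexive, observes that $S:=T^*\circ i:L_1(\mu)\to Y^*$ is representable by a Bochner-integrable density $g\in L_\infty(\mu,Y^*)$ (since $Y^*$ is reflexive, hence has the Radon--Nikod\'{y}m property), applies Egorov's theorem to find a large set $A$ on which $g$ is a uniform limit of simple functions, deduces that $S\circ P_A$ is a norm limit of finite-rank operators and hence compact, and then uses Schauder's theorem plus the identity $(S\circ P_A)^*(y)=T(y)\chi_A$ to conclude that $\{h\chi_A:h\in H\}$ is relatively norm compact. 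If you want to pursue your contrapositive route, the missing step is essentially a known dichotomy for bounded families in $L_\infty$ (independence of sign sets versus equimeasurability, in the spirit of Rosenthal's $\ell_1$-theorem), and you would need to prove or correctly cite it; as written, the argument has a gap exactly at its heart.
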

\begin{proof}
By the Davis-Figiel-Johnson-Pe{\l}czy\'{n}ski factorization (see, e.g., \cite[Theorem~5.37]{ali-bur}), there exist a reflexive Banach space~$Y$ and an operator $T:Y \to L_\infty(\mu)$
such that $T(B_Y) \supseteq H$. Let $i: L_1(\mu) \to L_\infty(\mu)^*$ be the inclusion operator and let $S:=T^*\circ i: L_1(\mu) \to Y^*$.
Since $Y^*$ is reflexive, $S$ is representable, that is, there is $g\in L_\infty(\mu,Y^*)$ such that
$$
	S(f)=\text{(Bochner)-}\int_\Omega fg \, d\mu \quad\text{for all $f\in L_1(\mu)$}
$$
(see, e.g., \cite[p.~75, Theorem~12]{die-uhl-J}).

Fix $\epsilon>0$. Since $g$ is strongly $\mu$-measurable, Egorov's theorem ensures the existence of $A\in \Sigma$ with $\mu(\Omega \setminus A)\leq \epsilon$
and a sequence $g_n:\Omega \to Y^*$ of $\Sigma$-simple $Y^*$-valued functions such that 
\begin{equation}\label{eqn:simple}
	\|g(t)-g_n(t)\| \leq \frac{1}{n} \quad\text{for every $t\in A$ and for every $n\in \N$}.
\end{equation}
For each $n\in \N$, let us consider the operator $S_n:L_1(\mu)\to Y^*$ defined by 
$$
	S_n(f)=\text{(Bochner)-}\int_A fg_n \, d\mu \quad\text{for all $f\in L_1(\mu)$}.
$$
Note that $S_n$ is a finite-rank operator, because $g_n$ is the sum of finitely many functions of the form $y^*\chi_B$, where $y^*\in Y^*$ and $B\in \Sigma$. 
Hence, $S_n$ is compact. Moreover, if $P_A:L_1(\mu)\to L_1(\mu)$ is 
the projection defined by $P_A(f):=f\chi_A$ for all $f\in L_1(\mu)$, then the operator $S\circ P_A: L_1(\mu)\to Y^*$ satisfies 
$$
	\|S\circ P_A-S_n\|=\sup_{f\in B_{L_1(\mu)}}\left\|\text{(Bochner)-}\int_A f (g-g_n) \, d\mu \right\| \stackrel{\eqref{eqn:simple}}{\leq}
	\frac{1}{n}. 
$$
It follows that $(S_n)_{n\in \N}$ converges to $S\circ P_A$ in the operator norm. In particular, $S\circ P_A$ is compact and, therefore, $(S\circ P_A)^*: Y \to L_\infty(\mu)$
is compact as well (by Schauder's theorem). 

For every $y\in Y$ and for every $f\in L_1(\mu)$ we have
\begin{multline*}
	\langle (S\circ P_A)^*(y),f \rangle=
	\langle y,(S\circ P_A)(f) \rangle=
	\langle y,T^*(i(f\chi_A)) \rangle \\ =
	\langle T(y),f\chi_A \rangle = \int_AfT(y)\, dy=\langle T(y)\chi_A,f \rangle.
\end{multline*} 
Therefore $(S\circ P_A)^*(y)=T(y)\chi_A$ for all $y \in Y$.
It follows that 
$$
	\{h\chi_A: \, h\in H\} \sub \{T(y)\chi_A: \, y\in B_Y\}=
	(S\circ P_A)^*(B_Y)
$$
and so $\{h\chi_A: \, h\in H\}$ is relatively norm compact in~$L_\infty(\mu)$.
\end{proof}

\subsection{A Dunford-Pettis type property for $L_1$ of a vector measure}

Recall that a Banach space $Z$ has the Dunford-Pettis property if and only if
$z_n^*(z_n)\to 0$ as $n\to \infty$ for all weakly null sequences $(z_n)_{n\in\N}$ and $(z_n^*)_{n\in\N}$ in~$Z$
and~$Z^*$, respectively (see, e.g., \cite[Theorem~5.4.4]{alb-kal}).

We next show that the $L_1$ space of an arbitrary vector measure enjoys
a Dunford-Pettis type property with respect to the ``vector duality'' 
induced by the integration operator (Subsection~\ref{subsection:vectorduality}).

\begin{theo}\label{theo:DPintegrationoperator}
Let $X$ be a Banach space, let $(\Omega,\Sigma)$ be a measurable space and let $\nu\in {\rm ca}(\Sigma,X)$.
Let $(f_n)_{n\in \N}$ be a sequence in~$L_1(\nu)$ and let $(g_n)_{n\in \N}$ be a weakly null sequence in~$L_\infty(\nu)$.
\begin{enumerate}
\item[(i)] If $(f_n)_{n\in \N}$ is weakly null, then $(I_\nu(f_ng_n))_{n\in \N}$ is weakly null.
\item[(ii)] If $(f_n)_{n\in \N}$ is bounded and equi-integrable, then $(I_\nu(f_ng_n))_{n\in \N}$ is norm null.
\end{enumerate}
\end{theo}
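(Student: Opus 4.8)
The plan is to handle the two parts separately, pushing everything down to the scalar spaces $L_1(\mu)$ and $L_\infty(\mu)=L_\infty(\nu)$ associated with the fixed Rybakov control measure~$\mu$, where the classical theory is available. For part~(i) it suffices to fix $x^*\in X^*$ and show $x^*(I_\nu(f_ng_n))\to 0$. Since $\mu$ is a control measure we have $|x^*\nu|\ll\mu$, so there is a density $h:=d(x^*\nu)/d\mu\in L_1(\mu)$, and then
$$
	x^*(I_\nu(f_ng_n))=\int_\Omega f_ng_n\,d(x^*\nu)=\int_\Omega g_n(f_nh)\,d\mu=\langle g_n,f_nh\rangle,
$$
the duality pairing of $L_\infty(\mu)=L_1(\mu)^*$ with $L_1(\mu)$. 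The multiplication map $f\mapsto fh$ is a bounded operator $L_1(\nu)\to L_1(\mu)$, because $\int_\Omega|fh|\,d\mu=\int_\Omega|f|\,d|x^*\nu|\leq\|x^*\|\,\|f\|_{L_1(\nu)}$; hence $(f_nh)_{n\in\N}$ is weakly null in $L_1(\mu)$. As $(g_n)_{n\in\N}$ is weakly null in $L_1(\mu)^*$ and $L_1(\mu)$ has the Dunford-Pettis property, the sequential characterization of that property gives $\langle g_n,f_nh\rangle\to 0$. Since $x^*$ was arbitrary, $(I_\nu(f_ng_n))_{n\in\N}$ is weakly null.

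For part~(ii) the extra difficulty is that we now need \emph{norm} convergence, i.e. $\sup_{x^*\in B_{X^*}}|x^*(I_\nu(f_ng_n))|\to 0$; this uniformity over $B_{X^*}$ cannot be read off from a single control measure, and since $\mathcal{R}(\nu)$ is not assumed relatively norm compact the argument of Proposition~\ref{pro:charDP} is unavailable. The key tool here is the equimeasurability Theorem~\ref{theo:equimeasurable} (which is exactly why it was isolated beforehand). Set $M:=\sup_n\|g_n\|_{L_\infty(\nu)}$ and $C:=\sup_n\|f_n\|_{L_1(\nu)}$. First I would fix $\eta>0$ and use equi-integrability of $(f_n)_{n\in\N}$ to produce $\delta>0$ with $\sup_n\|f_n\chi_B\|_{L_1(\nu)}\leq\eta$ whenever $\mu(B)\leq\delta$ (using $\|f_n\chi_B\|_{L_1(\nu)}=\|\nu_{f_n}\|(B)$ together with the standard equivalence between uniform $\mu$-continuity of the measures $\nu_{f_n}$ and of their semivariations). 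Since $(g_n)_{n\in\N}$ is relatively weakly compact in $L_\infty(\mu)$, Theorem~\ref{theo:equimeasurable} yields $A\in\Sigma$ with $\mu(\Omega\setminus A)\leq\delta$ such that $\{g_n\chi_A:n\in\N\}$ is relatively norm compact in $L_\infty(\mu)$.

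The computation then splits as $I_\nu(f_ng_n)=I_\nu(f_ng_n\chi_A)+I_\nu(f_ng_n\chi_{\Omega\setminus A})$. On~$A$: multiplication by $\chi_A$ is a bounded operator, hence weak-to-weak continuous, so $(g_n\chi_A)_{n\in\N}$ is weakly null; being also relatively norm compact it is norm null, and by Proposition~\ref{pro:norming}(i) we get $\|I_\nu(f_ng_n\chi_A)\|\leq C\,\|g_n\chi_A\|_{L_\infty(\nu)}\to 0$. Off~$A$: again by Proposition~\ref{pro:norming}(i) and the choice of $A$ and $\delta$, $\|I_\nu(f_ng_n\chi_{\Omega\setminus A})\|\leq M\,\|f_n\chi_{\Omega\setminus A}\|_{L_1(\nu)}\leq M\eta$. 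Therefore $\limsup_n\|I_\nu(f_ng_n)\|\leq M\eta$, and letting $\eta\downarrow 0$ finishes the proof.

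The hard part is the uniformity in part~(ii): equimeasurability is what lets us separate the ``compact part'' of $(g_n)$, where weak nullity upgrades to norm nullity, from a remainder supported on a set of small $\mu$-measure that is absorbed by the equi-integrability of $(f_n)$. The only routine point to double-check is the passage between the norm-of-values form of uniform $\mu$-continuity used in the definition of equi-integrability and the semivariation form $\|f_n\chi_B\|_{L_1(\nu)}$ appearing in the estimate, which differ only by a harmless universal constant.
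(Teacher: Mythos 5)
Your proposal is correct and follows essentially the same route as the paper: part~(i) is the paper's reduction to the Dunford--Pettis property of a scalar $L_1$ space via a Radon--Nikod\'{y}m density (the paper differentiates $x^*\nu$ with respect to $|x^*\nu|$ rather than with respect to the Rybakov measure~$\mu$, an immaterial difference), and part~(ii) is exactly the paper's argument, combining equi-integrability of $(f_n)_{n\in\N}$ with Theorem~\ref{theo:equimeasurable} to split $I_\nu(f_ng_n)$ over $A$ and $\Omega\setminus A$. The point you flag at the end is harmless for the reason you give (semivariation is controlled by twice the supremum of the norms of the values), and the paper uses that same passage implicitly.
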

 \begin{proof} (i) Fix $x^*\in X^*$. Let $h_{x^*} \in L_\infty(|x^* \nu|)$ be the Radon-Nikod\'{y}m derivative of 
 $x^*\nu$ with respect to~$|x^*\nu|$. For each $n\in \N$
 we have
 \begin{equation}\label{eqn:weak}
 	x^*\big(I_\nu(f_n g_n)\big)=\int_\Omega f_n g_n \, d(x^*\nu)=\int_\Omega f_n h_{x^*} g_n \, d|x^*\nu|.
 \end{equation}
 Since $(f_n)_{n\in \N}$ is weakly null in~$L_1(\nu)$ and the inclusion map $L_1(\nu)\to L_1(|x^*\nu|)$ is an operator, 
$(f_n)_{n\in \N}$ is weakly null in $L_1(|x^*\nu|)$ and so the same holds
 for $(f_n h_{x^*})_{n\in \N}$. In the same way, $(g_n)_{n\in \N}$ is weakly null in $L_\infty(|x^* \nu|)$, so we can apply the Dunford-Pettis property
 of $L_1(|x^*\nu|)$ and~\eqref{eqn:weak} to conclude that $x^*(I_\nu(f_n g_n))\to 0$ as $n\to \infty$.
 Since $x^*\in X^*$ is arbitrary, $(I_\nu(f_ng_n))_{n\in \N}$ is weakly null.
 
(ii) Define $\alpha:=\sup_{n\in \N}\|f_n\|_{L_1(\nu)}$ and $\beta:=\sup_{n\in \N}\|g_n\|_{L_\infty(\nu)}$. 
Let $\mu$ be a Rybakov control measure of~$\nu$. Fix $\epsilon>0$.
Since $(f_n)_{n\in \N}$ is equi-integrable, we can choose $\delta>0$ such that 
\begin{equation}\label{eqn:UI}
	\sup_{f\in F}\|f\chi_B\|_{L_1(\nu)}\leq \epsilon
	\quad \text{for every $B\in \Sigma$ with $\mu(B)\leq \delta$}.
\end{equation}
By Theorem~\ref{theo:equimeasurable}, the set $\{g_n:n\in \N\}$ is equimeasurable, so
there is $A\in \Sigma$ with $\mu(\Omega \setminus A)\leq \delta$ such that $\{g_n \chi_A: n\in \N\}$ is relatively norm compact in~$L_\infty(\nu)$.
Since the sequence $(g_n \chi_A)_{n\in \N}$ is weakly null in $L_\infty(\nu)$
(bear in mind that the map $g \mapsto g\chi_A$ is an operator on~$L_\infty(\nu)$), we conclude that $(g_n \chi_A)_{n\in \N}$ is norm null in~$L_\infty(\nu)$. 
Choose $n_0\in \N$ such that 
\begin{equation}\label{eqn:norm-null}
	\sup_{n\geq n_0} \|g_n\chi_A\|_{L_\infty(\nu)} \leq \epsilon. 
\end{equation}
Now, for every $f\in F$ and for every $n\in \N$ with $n\geq n_0$ we have
\begin{eqnarray*}
	\big\|I_\nu(fg_n)\big\| &\leq& \big\|I_\nu(fg_n\chi_{\Omega \setminus A})\big\|+\big\|I_\nu(f g_n\chi_A)\big\|
	\\ &\stackrel{\text{(Prop.~\ref{pro:norming}(i))}}{\leq}&
	\|f\chi_{\Omega\setminus A}\|_{L_1(\nu)} \|g_n\|_{L_\infty(\nu)}+\|f\|_{L_1(\nu)} \|g_n\chi_A\|_{L_\infty(\nu)} 
	\\ &\stackrel{\text{\eqref{eqn:UI} \& \eqref{eqn:norm-null}}}{\leq} & (\beta+\alpha)\epsilon.
\end{eqnarray*}
As $\epsilon>0$ is arbitrary, the sequence $(I_\nu(f_ng_n))_{n\in \N}$ is norm null.
\end{proof}

\subsection{The positive Schur property as a Dunford-Pettis type property}

As a natural outcome of our previous work we get the following characterization:

\begin{theo}\label{theo:PSP}
Let $X$ be a Banach space, let $(\Omega,\Sigma)$ be a measurable space and let $\nu\in {\rm ca}(\Sigma,X)$.
The following statements are equivalent:
\begin{enumerate}
\item[(i)] $L_1(\nu)$ has the positive Schur property.
\item[(ii)] For all weakly null sequences $(f_n)_{n\in \N}$ and $(g_n)_{n\in \N}$ in $L_1(\nu)$ and $L_\infty(\nu)$, respectively, the sequence
$(I_\nu(f_ng_n))_{n\in \N}$ is norm null.
\end{enumerate}
 \end{theo}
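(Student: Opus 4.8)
The plan is to prove the two implications separately, leaning on the abstract description of the positive Schur property in Proposition~\ref{pro:PSPabstract} and on Theorem~\ref{theo:DPintegrationoperator}. Throughout I fix a Rybakov control measure~$\mu$ of~$\nu$, so that $L_\infty(\nu)=L_\infty(\mu)$ and the lattice operations of both $L_1(\nu)$ and $L_\infty(\nu)$ are governed by $\mu$-a.e.\ comparison; in particular disjointness of a sequence means that the sets on which its terms are non-zero are pairwise disjoint modulo $\mu$-null sets.

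The implication (i)$\impli$(ii) is the quick one. Given weakly null sequences $(f_n)_{n\in\N}$ in $L_1(\nu)$ and $(g_n)_{n\in\N}$ in $L_\infty(\nu)$, I would first observe that $\{f_n:n\in\N\}$ is relatively weakly compact, hence L-weakly compact by the positive Schur property (Proposition~\ref{pro:PSPabstract}), hence bounded and equi-integrable (Proposition~\ref{pro:Lweaklycompact}). Theorem~\ref{theo:DPintegrationoperator}(ii) then applies directly and yields that $(I_\nu(f_ng_n))_{n\in\N}$ is norm null. No further computation is needed here.

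For (ii)$\impli$(i) I would argue by contraposition. If $L_1(\nu)$ fails the positive Schur property, then by Proposition~\ref{pro:PSPabstract} there is a disjoint weakly null sequence $(f_n)_{n\in\N}$ in~$L_1(\nu)$ that is not norm null, so after passing to a subsequence I may assume $\|f_n\|_{L_1(\nu)}\geq\epsilon$ for some $\epsilon>0$ and all~$n$. Let $A_n:=\{|f_n|>0\}\in\Sigma$, so that the $A_n$ are pairwise disjoint modulo $\mu$-null sets. Using the norming formula of Proposition~\ref{pro:norming}(ii), for each $n$ I pick $g_n\in B_{L_\infty(\nu)}$ with $\|I_\nu(f_ng_n)\|_X\geq\epsilon/2$; and since $f_n$ vanishes off $A_n$, I may replace $g_n$ by $g_n\chi_{A_n}$ without altering $I_\nu(f_ng_n)$ nor increasing the $L_\infty$-norm. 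Thus $(g_n)_{n\in\N}$ is a bounded sequence with $|g_n|\leq\chi_{A_n}$ on the pairwise disjoint sets $A_n$, while $\|I_\nu(f_ng_n)\|_X\geq\epsilon/2$ for every~$n$.

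The crux, and the step I expect to be the main obstacle, is to check that this disjointly supported sequence $(g_n)_{n\in\N}$ is weakly null in $L_\infty(\nu)=L_\infty(\mu)$; once this is done, $(f_n)_{n\in\N}$ and $(g_n)_{n\in\N}$ are two weakly null sequences with $\|I_\nu(f_ng_n)\|_X\not\to 0$, contradicting~(ii) and finishing the proof. To verify weak nullity I would use the classical representation of $L_\infty(\mu)^*$ as the space of bounded finitely additive set functions on~$\Sigma$ that vanish on $\mu$-null sets. For such a functional, represented by a finitely additive $\lambda$ of bounded variation~$|\lambda|$, the bound $|g_n|\leq\chi_{A_n}$ gives $|\langle\lambda,g_n\rangle|\leq|\lambda|(A_n)$; and since the $A_n$ are pairwise disjoint, finite additivity and monotonicity yield $\sum_{n=1}^N|\lambda|(A_n)=|\lambda|\big(\bigcup_{n=1}^N A_n\big)\leq|\lambda|(\Omega)<\infty$ for every~$N$, whence $|\lambda|(A_n)\to 0$ and so $\langle\lambda,g_n\rangle\to 0$. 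As $\lambda$ is arbitrary, $(g_n)_{n\in\N}$ is weakly null. The delicate point worth emphasizing is that the convergence $|\lambda|(A_n)\to0$ uses only finite (not countable) additivity of~$|\lambda|$, which is precisely what makes the argument valid for the full dual $L_\infty(\mu)^*$ rather than merely for the $\sigma(L_\infty,L_1)$-topology.
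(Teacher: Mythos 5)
Your proof is correct. The direction (i)$\impli$(ii) is exactly the paper's argument: weakly null $\Rightarrow$ relatively weakly compact $\Rightarrow$ L-weakly compact by Proposition~\ref{pro:PSPabstract} $\Rightarrow$ bounded and equi-integrable by Proposition~\ref{pro:Lweaklycompact}, then Theorem~\ref{theo:DPintegrationoperator}(ii). For (ii)$\impli$(i), however, you take a genuinely different route. The paper proves directly that every disjoint weakly null sequence $(f_n)_{n\in\N}$ is equi-integrable: for each fixed $A\in\Sigma$ it applies condition~(ii) to the pair $(f_n\chi_A)_{n\in\N}$ and $(\chi_{A_n})_{n\in\N}$ (the supports), obtaining that $(\nu_{f_n}(A))_{n\in\N}$ is norm null, and then invokes the Vitali--Hahn--Saks theorem through Lemma~\ref{lem:equi}; norm nullity of $(f_n)_{n\in\N}$ then comes out of the L-weak compactness machinery. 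You instead argue by contraposition: from a disjoint weakly null sequence with $\|f_n\|_{L_1(\nu)}\geq\epsilon$ you use the norming formula of Proposition~\ref{pro:norming}(ii) to manufacture disjointly supported functions $g_n$ with $|g_n|\leq\chi_{A_n}$ and $\|I_\nu(f_ng_n)\|\geq\epsilon/2$, and you verify weak nullity of $(g_n)_{n\in\N}$ via the representation of $L_\infty(\mu)^*$ as bounded finitely additive measures vanishing on $\mu$-null sets --- that verification is sound, including the point that only finite additivity of $|\lambda|$ is needed to get $|\lambda|(A_n)\to 0$. Both arguments ultimately rest on the same key fact (a disjointly supported sequence in $B_{L_\infty(\nu)}$ is weakly null; the paper justifies it for $(\chi_{A_n})_{n\in\N}$ by the $c_0$-basis argument, which would not directly cover your non-characteristic $g_n$, whence your ba-duality argument is the right tool). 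What each approach buys: yours is shorter and self-contained, avoiding Vitali--Hahn--Saks entirely and exhibiting an explicit witness pair that quantitatively contradicts~(ii); the paper's proof stays within its established equi-integrability toolkit, and that scheme (test $\nu_{f_n}(A)$ for all $A$, then apply Lemma~\ref{lem:equi}) is the same one reused elsewhere in the paper, e.g.\ in Proposition~\ref{pro:charDP}.
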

 \begin{proof}
 (i)$\impli$(ii): This follows from Theorem~\ref{theo:DPintegrationoperator}, because the positive Schur property of~$L_1(\nu)$ is equivalent 
 to the fact that every relatively weakly compact subset of $L_1(\nu)$ is equi-integrable (Propositions~\ref{pro:PSPabstract} and~\ref{pro:Lweaklycompact}).  
 
(ii)$\impli$(i): By Propositions~\ref{pro:PSPabstract} and~\ref{pro:Lweaklycompact}, it suffices to prove that 
\emph{every disjoint weakly null sequence $(f_n)_{n\in \N}$ in~$L_1(\nu)$ is equi-integrable}. 
Let $(A_n)_{n\in \N}$ be a sequence of pairwise disjoint elements of~$\Sigma$ such that $f_n\chi_{A_n}=f_n$ for all $n\in \N$. Observe 
that $(\chi_{A_n})_{n\in \N}$ is weakly null in~$L_\infty(\nu)$.
Indeed, we can assume without loss of generality that $A_n\not\in\mathcal{N}(\nu)$ for all $n\in \N$. Then 
$(\chi_{A_n})_{n\in \N}$ is a basic sequence in~$L_\infty(\nu)$ which is equivalent to the usual basis of~$c_0$. In particular,
$(\chi_{A_n})_{n\in \N}$ is weakly null in~$L_\infty(\nu)$.

Fix $A\in \Sigma$. Define $\tilde{f}_n:=f_n\chi_A$ for all $n\in \N$. Note that 
\begin{equation}\label{eqn:star}
	I_\nu(\tilde{f}_n\chi_{A_n})=I_\nu(\tilde{f}_n)=\nu_{f_n}(A)
	\quad\text{for all $n\in \N$}.
\end{equation}
Since $(\tilde{f}_n)_{n\in \N}$ is weakly null in~$L_1(\nu)$ (because $(f_n)_{n\in \N}$ is weakly null and
the map $h \mapsto h\chi_A$
is an operator on~$L_1(\nu)$) and $(\chi_{A_n})_{n\in \N}$ is weakly null in~$L_\infty(\nu)$, 
condition~(ii) and~\eqref{eqn:star} imply that the sequence $(\nu_{f_n}(A))_{n\in \N}$ 
is norm null. As $A\in \Sigma$ is arbitrary, we can apply Lemma~\ref{lem:equi} to conclude that $(f_n)_{n\in \N}$ is equi-integrable.
\end{proof}

Of course, Theorems~\ref{theo:DPintegrationoperator} and~\ref{theo:PSP} provide another point of view for
the positive Schur property of the $L_1$ space of a vector measure taking values in a Banach space with the Schur property, \cite[proof of Theorem~4]{cur3}. 

\subsection{Vector measures with $\sigma$-finite variation}
 
The analysis of the Dunford-Pettis property is simpler for $L_1$ spaces of a vector measure with $\sigma$-finite variation. 
 
\begin{pro}\label{pro:finite-variation}
Let $X$ be a Banach space, let $(\Omega,\Sigma)$ be a measurable space and let $\nu\in {\rm ca}(\Sigma,X)$
with $\sigma$-finite variation. If $(f_n)_{n\in \N}$ is a bounded and equi-integrable sequence in~$L_1(\nu)$ and 
$(\varphi_n)_{n\in \N}$ is a weakly null sequence in~$L_1(\nu)^*$, then $\varphi_n(f_n)\to 0$ as $n\to\infty$.
\end{pro}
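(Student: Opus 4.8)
The plan is to reduce everything to the uniformly $L_\infty(\nu)$-bounded part of the $f_n$ via approximate order boundedness, and then to transfer the weakly null functionals $\varphi_n$, through the adjoint of $\iota_\nu$, to a weakly null sequence in $L_\infty(|\nu|)$ on which the equimeasurability theorem of the previous subsection converts weak nullity into norm nullity off a set of small variation. Write $M:=\sup_n\|\varphi_n\|<\infty$ and fix $\epsilon>0$. Since $\{f_n:n\in\N\}$ is bounded and equi-integrable, it is approximately order bounded (Proposition~\ref{pro:Lweaklycompact}), so I would pick $\rho>0$ and decompose $f_n=h_n+r_n$ with $\|h_n\|_{L_\infty(\nu)}\le\rho$ and $\|r_n\|_{L_1(\nu)}\le\epsilon$. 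Then $|\varphi_n(f_n)|\le|\varphi_n(h_n)|+M\epsilon$, and the problem is reduced to controlling $\varphi_n(h_n)$ for functions $h_n$ bounded by $\rho$ in $L_\infty(\nu)$.

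The step where $\sigma$-finiteness of $|\nu|$ is essential is the representation of $\varphi_n$, on bounded functions with finite-variation support, as integration against the variation. Writing $\Omega=\bigcup_k\Omega_k$ with $\Omega_k\uparrow\Omega$ and $|\nu|(\Omega_k)<\infty$, I first discard the tail: because $\chi_{\Omega\setminus\Omega_k}\downarrow 0$ and $L_1(\nu)$ has order continuous norm, $\|\chi_{\Omega\setminus\Omega_k}\|_{L_1(\nu)}\to 0$, whence Proposition~\ref{pro:norming}(i) gives $\sup_n\|h_n\chi_{\Omega\setminus\Omega_k}\|_{L_1(\nu)}\le\rho\,\|\chi_{\Omega\setminus\Omega_k}\|_{L_1(\nu)}\to 0$; so I fix $k$ making this supremum $\le\epsilon$, at the cost of a further $M\epsilon$. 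Now $h_n\chi_{\Omega_k}\in L_\infty(|\nu|)$ has support of finite variation, hence lies in $L_1(|\nu|)$, and setting $w_n:=\iota_\nu^*(\varphi_n)\in L_1(|\nu|)^*=L_\infty(|\nu|)$ (the dual identification using $\sigma$-finiteness), the adjoint relation yields $\varphi_n(h_n\chi_{\Omega_k})=\int_{\Omega_k}h_nw_n\,d|\nu|$. Since $\iota_\nu^*$ is an operator, $(w_n)$ is weakly null in $L_\infty(|\nu|)$ with $\|w_n\|_\infty\le M$.

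Finally I would apply Theorem~\ref{theo:equimeasurable} on the finite measure space $(\Omega_k,|\nu|)$ to the relatively weakly compact sequence $(w_n\chi_{\Omega_k})$, obtaining $A\subseteq\Omega_k$ with $|\nu|(\Omega_k\setminus A)$ as small as desired and $\{w_n\chi_A\}$ relatively norm compact; being also weakly null, it is norm null, so $\|w_n\chi_A\|_{L_\infty}\to 0$. Splitting $\int_{\Omega_k}h_nw_n\,d|\nu|$ over $A$ and $\Omega_k\setminus A$ bounds it by $\rho\,\|w_n\chi_A\|_\infty\,|\nu|(\Omega_k)+\rho M\,|\nu|(\Omega_k\setminus A)$; choosing $|\nu|(\Omega_k\setminus A)$ small relative to $\rho$ gives $\limsup_n|\varphi_n(f_n)|\le(2M+1)\epsilon$, and letting $\epsilon\to 0$ concludes. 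The conceptual crux, and the step I expect to be most delicate, is precisely that $(f_n)$ is not assumed weakly null, so the classical Dunford-Pettis property of $L_1(|\nu|)$ cannot be invoked for both sequences simultaneously; what rescues the argument is that equimeasurability upgrades the weak nullity of the transferred functionals $w_n$ to genuine norm nullity on a set carrying almost all of the variation, against which the merely bounded $h_n$ contribute nothing in the limit.
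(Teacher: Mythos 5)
Your proof is correct and takes essentially the same route as the paper's: reduction to the approximately order bounded part (Proposition~\ref{pro:Lweaklycompact}), truncation to a set of finite variation, transfer of the functionals $\varphi_n$ through the adjoint of an $L_1$-inclusion into an $L_\infty$ space over the variation, and equimeasurability (Theorem~\ref{theo:equimeasurable}) to upgrade weak nullity of the transferred functionals to norm nullity on a set carrying almost all of the variation. The only cosmetic differences are that the paper works with the finite measure $\mu(A):=|\nu|(A\cap A_m)$ and the duality $L_1(\mu)^*=L_\infty(\mu)$ instead of your $\sigma$-finite identification $L_1(|\nu|)^*=L_\infty(|\nu|)$, and it packages your final equimeasurability-plus-splitting argument as a citation of Theorem~\ref{theo:DPintegrationoperator}(ii) applied to that scalar measure.
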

\begin{proof} The sequence $(f_n)_{n\in \N}$ is approximately order bounded (Proposition~\ref{pro:Lweaklycompact}). Hence,
we can assume without loss of generality that $f_n\in j_\nu(B_{L_\infty(\nu)})$ for all $n\in \N$.
Define $\alpha:=\sup_{n\in \N}\|\varphi_n\|_{L_1(\nu)^*}$.
Let $(A_m)_{m\in \N}$ be an increasing sequence in~$\Sigma$ such that $\Omega=\bigcup_{m\in \N}A_m$ and
$|\nu|(A_m)<\infty$ for all $m\in \N$. 
Fix $\epsilon>0$. Choose $m\in \N$ large enough such that
\begin{equation}\label{eqn:smalltail}
	\|\nu\|(\Omega \setminus A_m)\leq \epsilon.
\end{equation}
Define $\mu(A):=|\nu|(A\cap A_m)$ for all $A\in \Sigma$, so that $\mu$ is a finite non-negative measure. Consider  
the inclusion operator $\iota: L_1(\mu) \to L_1(\nu)$ (see, e.g., \cite[Lemma~3.14]{oka-alt})
and $\iota^*:L_1(\nu)^*\to L_\infty(\mu)$.
Define $g_n:=\iota^*(\varphi_n)\in L_\infty(\mu)$ for all $n\in \N$, so that
$(g_n)_{n\in\N}$ is weakly null in $L_\infty(\mu)$.

The sequence $(f_n\chi_{A_m})_{n\in \N}$ is bounded and equi-integrable in $L_1(\mu)$ and 
$$
	\langle g_n,f_n \chi_{A_m}\rangle = \int_{A_m} f_n g_n \, d\mu=\varphi_n(f_n \chi_{A_m}) \quad\text{for all $n\in \N$}.
$$
Therefore, the Dunford-Pettis property of~$L_1(\mu)$ (cf. Theorem~\ref{theo:DPintegrationoperator}(ii)) 
ensures that $\varphi_n(f_n \chi_{A_m})\to 0$ as $n\to \infty$.
Take $n_0\in \N$ such that 
$$
	|\varphi_n(f_n \chi_{A_m})|\leq \epsilon \quad \text{whenever $n\geq n_0$}.
$$
Since
$$
	|\varphi_n(f_n\chi_{\Omega \setminus A_m})| \leq 
	\alpha \|f_n\chi_{\Omega \setminus A_m}\|_{L_1(\nu)} \leq \alpha\|\nu\|(\Omega\setminus A_m)\leq \alpha\epsilon
	\quad\text{for all $n\in \N$}
$$ 
(by Proposition~\ref{pro:norming}(i) and~\eqref{eqn:smalltail}), 
we have
$$
	| \varphi_n(f_n)|\leq |\varphi_n(f_n\chi_{A_m})| + |\varphi_n(f_n\chi_{\Omega \setminus A_m})|
	\leq (1+\alpha)\epsilon
	\quad\text{whenever $n\geq n_0$}.
$$ 
This shows that $\varphi_n(f_n)\to 0$ as $n\to\infty$.
\end{proof}

By putting together Propositions~\ref{pro:PSPabstract}, \ref{pro:Lweaklycompact} and~\ref{pro:finite-variation}, we get
the already mentioned result from~\cite{cur3}: 

\begin{cor}\label{cor:PSP-DP}
Let $X$ be a Banach space, let $(\Omega,\Sigma)$ be a measurable space and let $\nu\in {\rm ca}(\Sigma,X)$
with $\sigma$-finite variation. If $L_1(\nu)$ has the positive Schur property, then it has the Dunford-Pettis property.
\end{cor}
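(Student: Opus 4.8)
The plan is to verify the bilinear characterization of the Dunford-Pettis property recalled just before Theorem~\ref{theo:DPintegrationoperator}: it suffices to show that $\varphi_n(f_n)\to 0$ as $n\to\infty$ for every pair of weakly null sequences $(f_n)_{n\in\N}$ in~$L_1(\nu)$ and $(\varphi_n)_{n\in\N}$ in~$L_1(\nu)^*$. So I would fix two such sequences and aim to reduce the problem to Proposition~\ref{pro:finite-variation}, whose hypotheses ask for $(f_n)_{n\in\N}$ to be \emph{bounded and equi-integrable} rather than merely weakly null.

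First I would extract the key structural consequence of the positive Schur property for the sequence $(f_n)_{n\in\N}$. Being weakly null, this sequence is relatively weakly compact (its range together with the limit~$0$ is weakly compact). Since $L_1(\nu)$ has the positive Schur property, the implication (i)$\impli$(iv) in Proposition~\ref{pro:PSPabstract} shows that $\{f_n:n\in\N\}$ is L-weakly compact. By the characterization of L-weakly compact subsets of~$L_1(\nu)$ in Proposition~\ref{pro:Lweaklycompact}, the equivalence between its statements~(i) and~(ii) then tells me that $(f_n)_{n\in\N}$ is bounded and equi-integrable.

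With this in hand the conclusion is immediate: $(f_n)_{n\in\N}$ is bounded and equi-integrable, $(\varphi_n)_{n\in\N}$ is weakly null in~$L_1(\nu)^*$, and $\nu$ has $\sigma$-finite variation, so Proposition~\ref{pro:finite-variation} applies verbatim to yield $\varphi_n(f_n)\to 0$. Since $(f_n)_{n\in\N}$ and $(\varphi_n)_{n\in\N}$ were arbitrary weakly null sequences, $L_1(\nu)$ has the Dunford-Pettis property.

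The substantive work has already been carried out in the cited propositions, so no real obstacle remains at the level of this corollary: the $\sigma$-finiteness of the variation enters only through Proposition~\ref{pro:finite-variation} (where one localizes to a set of finite variation and invokes the classical Dunford-Pettis property of an ordinary $L_1(\mu)$), while the positive Schur property enters only to upgrade weak nullity of $(f_n)_{n\in\N}$ to equi-integrability. The one point requiring attention is simply to feed each hypothesis into the correct proposition; once the bilinear formulation of the Dunford-Pettis property is adopted, the three results dovetail directly.
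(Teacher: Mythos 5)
Your proposal is correct and is precisely the argument the paper intends: the paper's proof consists of the single line ``By putting together Propositions~\ref{pro:PSPabstract}, \ref{pro:Lweaklycompact} and~\ref{pro:finite-variation}'', and your write-up supplies exactly that glue --- the bilinear characterization of the Dunford-Pettis property, the upgrade of a weakly null sequence to an L-weakly compact (hence bounded and equi-integrable) set via the positive Schur property, and the final appeal to Proposition~\ref{pro:finite-variation}. Nothing is missing and nothing differs in substance from the paper's route.
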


Let $E$ be a Banach space with a normalized $1$-unconditional Schauder basis, say~$(e_n)_{n\in \N}$.
The $E$-sum of countably many copies of~$L_1[0,1]$ is the Banach lattice
$Z$ of all sequences $(h_n)_{n\in \N}$ in~$L_1[0,1]$ such that the series $\sum_{n=1}^\infty \|h_n\|_{L_1[0,1]} \, e_n$
converges unconditionally in~$E$, equipped with the norm
$$
	\left\|(h_n)_{n\in \N}\right\|_{Z}
	:=
	\left\|\sum_{n=1}^\infty \|h_n\|_{L_1[0,1]} \, e_n\right\|_E
$$
and the coordinatewise order. If $E$ has the the Schur property, then $Z$ has the positive Schur property, but it is 
not lattice-isomorphic to an AL-space unless $E$ is isomorphic to~$\ell_1$, \cite[Section~3]{wnu3}.

The following construction provides more examples of Banach lattices with such features:
 
\begin{exa}\label{exa:Lipecki}
Let $X$ be a Banach space and let $\sum_{n=1}^\infty x_n$ be an unconditionally convergent series in~$X$ with $x_n\neq 0$ for all $n\in \N$. Let 
$\lambda$ be the Lebesgue measure on the $\sigma$-algebra $\Sigma$ of all Borel subsets of~$[0,1]$.
Write $I_n:=(2^{-n},2^{-n+1}]$ for all $n\in \N$. Then:
\begin{enumerate}
\item[(i)] The formula
$$
	\nu(A):=\sum_{n=1}^\infty 2^n\lambda (A\cap I_n)x_n, \quad A\in \Sigma,
$$ 
defines a vector measure $\nu \in {\rm ca}(\Sigma,X)$.
\item[(ii)] $\mathcal{N}(\nu)=\mathcal{N}(\lambda)$. Hence, $\nu$ is atomless and $L_1(\nu)$ is separable.
\item[(iii)] $\mathcal{R}(\nu)$ is relatively norm compact.
\item[(iv)] $|\nu|$ is $\sigma$-finite and $|\nu|([0,1])=\sum_{n=1}^\infty \|x_n\|$.
\item[(v)] If $\sum_{n=1}^\infty x_n$ is not absolutely convergent, then $L_1(\nu)$ is not lattice-isomorphic to an AL-space.
\item[(vi)] If $X$ has the Schur property, then $L_1(\nu)$ has the positive Schur property and the Dunford-Pettis property.
\item[(vii)] If $\sum_{n=1}^\infty x_n$ is not absolutely convergent and $X$ has the Schur property, then 
$L_1(\nu)$ is not lattice-isomorphic to $L_1(\tilde{\nu})$ for any $\sigma$-algebra~$\tilde{\Sigma}$
and any $\tilde{\nu} \in {\rm ca}(\tilde{\Sigma},c_0)$ such that $\mathcal{R}(\tilde{\nu})$ is relatively norm compact. 
\end{enumerate}
\end{exa}
\begin{proof}
Since $\sum_{n=1}^\infty x_n$ is unconditionally convergent, for every $(a_n)_{n\in \N}\in \ell_\infty$
the series $\sum_{n=1}^\infty a_nx_n$ is unconditionally convergent and the map
$$
	T:\ell_\infty \to X, \quad
	T((a_n)_{n\in \N}):=\sum_{n=1}^\infty a_n x_n,
$$
is a compact operator (see, e.g., \cite[Theorem~1.9]{die-alt}). This shows that the map $\nu$ is well-defined and has
relatively norm compact range (note that $2^n \lambda(A\cap I_n) \leq 1$ for all $n\in \N$). 
Since the map $\Sigma\ni A\mapsto 2^n\lambda(A\cap I_n)x_n \in X$ is countably additive for each $n\in \N$,
the Vitali-Hahn-Saks theorem (see, e.g., \cite[p.~24, Corollary~10]{die-uhl-J}) ensures that
$\nu$ is countably additive. This proves parts (i) and~(iii).

(ii) The equality $\mathcal{N}(\nu)=\mathcal{N}(\lambda)$ is obvious. Since $\lambda$ is atomless, so is~$\nu$.
Let $\mathcal{C} \sub \Sigma$ be a countable set such that for every $A\in \Sigma$ we have
$\inf_{C\in \mathcal{C}}\lambda(A\triangle C)=0$. Then for every $A\in \Sigma$ we also have $\inf_{C\in \mathcal{C}}\|\nu\|(A\triangle C)=0$
(notice that $\nu$ is $\lambda$-continuous). 
This implies that~$L_1(\nu)$ is separable, because
the set of all $\Sigma$-simple functions is norm dense in~$L_1(\nu)$.

(iv) It is easy to check that $|\nu|(A)=\sum_{n=1}^\infty 2^n\lambda (A\cap I_n)\|x_n\|$ for every $A\in \Sigma$. 

(v) This follows from \cite[Proposition~2]{cur2} and~(iv).
 
(vi) We already know that the Schur property of~$X$ implies that $L_1(\nu)$ has the positive Schur property, \cite[proof of Theorem~4]{cur3}. 
Now, (iv) and Corollary~\ref{cor:PSP-DP} ensure that $L_1(\nu)$ has the Dunford-Pettis property.

(vii) Suppose, by contradiction, that there exist a $\sigma$-algebra $\tilde{\Sigma}$ and $\tilde{\nu}\in {\rm ca}(\tilde{\Sigma},c_0)$ 
such that $\mathcal{R}(\tilde{\nu})$ is relatively norm compact and $L_1(\nu)$ is lattice-isomorphic to $L_1(\tilde{\nu})$. 
Then $L_1(\tilde{\nu})$ has the positive Schur property (by~(vi)) and we can apply Proposition~\ref{pro:charDP}
to infer that the integration operator $I_{\tilde{\nu}}:L_1(\tilde{\nu}) \to c_0$ is Dunford-Pettis.
Now, Proposition~\ref{pro:OIP} and Theorem~\ref{theo:general} (the usual basis of $c_0$ is shrinking) imply that  
$L_1(\tilde{\nu})$ is lattice-isomorphic to an AL-space, which contradicts~(v).
\end{proof}

\begin{rem}\label{rem:Lipecki}
{\rm Part (vii) of Example~\ref{exa:Lipecki} should be compared with \cite[Theorem~1]{cur2}. That result states
that if $X$ is a Banach space, $(\Omega,\Sigma)$ is a measurable space, the vector measure $\nu\in {\rm ca}(\Sigma,X)$ is atomless and $L_1(\nu)$ is separable, then
there is $\tilde{\nu}\in {\rm ca}(\Sigma,c_0)$ such that $L_1(\nu)$ and $L_1(\tilde{\nu})$ are lattice-isometric
(cf. \cite[Theorem~5]{lip} for another proof). For variants in the non-separable setting, see \cite{rod16} and~\cite{rod22}.
In \cite[Theorem~5]{lip} it was claimed that if, in addition, $\mathcal{R}(\nu)$ is relatively norm compact, then $\tilde{\nu}$ can be chosen 
so that $\mathcal{R}(\tilde{\nu})$ is relatively norm compact as well.
Unfortunately, this turns out to be false in general, as shown in Example~\ref{exa:Lipecki}(vii).}
\end{rem}

\subsection*{Acknowledgements} 
I would like to thank Z.~Lipecki for valuable correspondence about Remark~\ref{rem:Lipecki}.
The research was supported by grants PID2021-122126NB-C32 
(funded by MCIN/AEI/10.13039/501100011033 and ``ERDF A way of making Europe'', EU) and 
21955/PI/22 (funded by {\em Fundaci\'on S\'eneca - ACyT Regi\'{o}n de Murcia}).


\bibliographystyle{amsplain}

\end{document}